\newtheorem{thm}{Theorem}[section]
\newtheorem*{thm*}{Theorem}
\newtheorem{cor}[thm]{Corollary}
\newtheorem*{cor*}{Corollary}
\newtheorem{lem}[thm]{Lemma}
\newtheorem*{lem*}{Lemma}
\newtheorem{prop}[thm]{Proposition}
\newtheorem*{prop*}{Proposition}
\theoremstyle{definition}
\newtheorem{defn}[thm]{Definition}
\newtheorem*{defn*}{Definition}
\newtheorem*{conjecture}{Conjecture}
\theoremstyle{remark}
\newtheorem{rem}[thm]{Remark}
\newtheorem*{rem*}{Remark}
\newtheorem{example}[thm]{Example}
\newtheorem*{problem*}{Problem}
\numberwithin{equation}{section}
\newcommand{\Q}{\mathbb Q}
\newcommand{\C}{\mathbb C}
\newcommand{\Z}{\mathbb Z}
\newcommand{\cA}{\mathcal A}
\renewcommand{\AA}{\mathbb A}
\newcommand{\DD}{\mathbb D}
\newcommand{\eps}{\epsilon}
\newcommand{\Ht}{\tilde{H}}
\DeclareMathOperator{\LLT}{LLT}
\DeclareMathOperator{\Id}{Id}
\DeclareMathOperator{\touch}{touch}
\DeclareMathOperator{\Des}{Des}
\DeclareMathOperator{\Dinv}{Dinv}
\DeclareMathOperator{\dinv}{dinv}
\DeclareMathOperator{\pExp}{Exp}
\DeclareMathOperator{\Sym}{Sym}
\DeclareMathOperator{\inv}{inv}
\DeclareMathOperator{\Inv}{Inv}
\DeclareMathOperator{\Std}{Std}
\DeclareMathOperator{\area}{area}
\DeclareMathOperator{\Area}{Area}
\DeclareMathOperator{\bounce}{bounce}
\renewcommand{\Im}{\mathrm{Im}}
\def\acts{\mathrel{\reflectbox{$\righttoleftarrow$}}}
\begin{document}

\title{A proof of the shuffle conjecture}
\author{Erik Carlsson}
\address{International Centre for Theoretical Physics \\
Str. Costiera, 11, 34151 Trieste, Italy}
\curraddr{Department of Mathematics, University of California, Davis \\
1 Shields Ave., Davis, CA 95616}
\email{ecarlsson@math.ucdavis.edu}
\author{Anton Mellit}
\address{International Centre for Theoretical Physics \\
Str. Costiera, 11, 34151 Trieste, Italy}
\curraddr{Faculty of Mathematics, University of Vienna \\
Oskar-Morgenstern-Platz 1, 1090 Vienna, Austria}
\email{anton.mellit@univie.ac.at}

\subjclass[2010]{05E05, 05E10, 05A30}

\begin{abstract}
We present a proof of the compositional shuffle conjecture
\cite{haglund2012compositional},
which generalizes the famous shuffle conjecture
for the character of the diagonal coinvariant algebra
\cite{haglund2005diagcoinv}.
We first formulate the 
combinatorial side of the conjecture in terms
of certain operators on a graded vector space $V_*$
whose degree zero part is the ring of symmetric
functions $\Sym[X]$ over $\mathbb{Q}(q,t)$.
We then extend these operators to 
an action of an algebra $\tilde{\AA}$ acting on this space,
and interpret the right generalization
of the $\nabla$ using an involution of the algebra
which is antilinear with
respect to the conjugation $(q,t)\mapsto (q^{-1},t^{-1})$.
\end{abstract}

\maketitle

\section{Introduction}

The shuffle conjecture 
of Haglund, Haiman, Loehr, Remmel, and Ulyanov
\cite{haglund2005diagcoinv} predicts a combinatorial
formula for the Frobenius
character $\mathcal{F}_{R_n}(X;q,t)$ of the diagonal coinvariant
algebra $R_n$ in $n$ pairs of variables, which is a symmetric function
in infinitely many variables with coefficients in
 $\mathbb{Z}_{\geq 0}[q,t]$.
By a result of Haiman \cite{Hai02},
the Frobenius character is given explicitly by
\[\mathcal{F}_{R_n}(X;q,t)=(-1)^n \nabla e_n[X],\]
where up to a sign convention,
$\nabla$ is the operator which is diagonal
in the modified Macdonald basis defined in 
\cite{Bergeron99identitiesand}.
The original shuffle conjecture states
\begin{equation}
\label{shuffconj}
(-1)^n \nabla e_n[X] = 
\sum_{\pi} \sum_{w \in \mathcal{WP}_\pi} t^{\area(\pi)} q^{\dinv(\pi,w)} x_w.
\end{equation}
Here $\pi$ is a Dyck path of length $n$, and $w$ is some
extra data called a ``word parking function''
depending on $\pi$. The functions $(\area,\dinv)$
are statistics associated to a Dyck path and a parking function, and $x_w$
is a monomial in the variables $x$.
They proved that this sum,
denoted $D_n(X;q,t)$, is symmetric in the 
$x$ variables and so does at least define a symmetric
function. 
They furthermore showed that it included many previous
conjectures and results about the $q,t$-Catalan numbers,
and other special cases 
\cite{garsia1996remarkable,
garsia2002catalan,haglund2003conjectured,
egge2003sch,haglund2004sch}.
Remarkably, $D_n(X;q,t)$ had not even been
proven to be symmetric in the $q,t$ variables until now,
even though the symmetry of $\mathcal{F}_{R_n}(X;q,t)$ is obvious.
The name ``shuffle conjecture'' has to do with the fact
that the coefficient of $m_\mu$ in equation \eqref{shuffconj} 
can be expressed in terms parking functions that are
``$\mu$-shuffles.''
See Conjecture 6.1 of 
Haglund's book \cite{haglund2008catalan} for a detailed explanation,
and Chapter 6 in general for a thorough introduction to this topic.


In \cite{haglund2012compositional}
Haglund, Morse, and Zabrocki conjectured
a refinement of the original conjecture which partitions
$D_n(X;q,t)$ by specifying the points where the Dyck
path touches the diagonal called the ``compositional
shuffle conjecture.''
The refined conjecture states
\begin{equation}
\label{compshuff}
\nabla \left(C_{\alpha}[X;q]\right)=
\sum_{\touch(\pi)=\alpha} \sum_{w \in \mathcal{WP}_\pi} t^{\area(\pi)}q^{\dinv(\pi,w)} x_w.
\end{equation}
Here $\alpha$ is a composition, i.e. a finite list of positive
integers specifying the gaps between the touch points of $\pi$.
The function $C_{\alpha}[X;q]$ is defined below as a 
composition of creation operators for Hall-Littlewood polynomials
in the variable $1/q$. They proved that
\[\sum_{|\alpha|=n} C_\alpha[X;q]=(-1)^n e_n[X],\]
implying that \eqref{compshuff} does indeed generalize
\eqref{shuffconj}.
The right hand side of \eqref{compshuff} will be denoted
by $D_{\alpha}(X;q,t)$.
A desirable approach to proving
\eqref{compshuff} would be to determine a recursive formula
for $D_{\alpha}(X;q,t)$, and interpret the result in terms of
some commutation relations for $\nabla$. Indeed, this
approach has been applied in some important special cases, see 
\cite{garsia2002catalan,GXZ2012,hicks2012sharpening}.
In \cite{GXZ2012}, for instance, the 
authors devise a recursive formula
(Proposition 3.12) to prove the Catalan case of 
the compositional conjecture, extending the results of \cite{garsia2002catalan}.
Unfortunately, no such recursion is known in the general
case, and so an even more refined function is needed.

In this paper, we will construct the desired refinement
as an element of a larger vector space $V_k$
of symmetric functions
over $\Q(q,t)$ with $k$ additional variables $y_i$ adjoined,
where $k$ is the length of the composition $\alpha$,
\[N_{\alpha} \in V_k=\Sym[X][y_1,...,y_k].\]
In our first result, (Theorem \ref{thm:recN}), 
we will explain how to recover $D_\alpha(X;q,t)$ from $N_\alpha$,
which is defined as an explicit recursion.
In fact, while they live in different vector spaces,
the recursions for $N_\alpha$ are similar to 
the recursions for the Catalan case in \cite{GXZ2012}. 
We make this connection precise
in Proposition \ref{catrecprop} below, which explains how
the latter formulas follow as a special case.

We then define a pair of algebras $\mathbb{A}$
and $\mathbb{A}^*$
which are isomorphic by an antilinear isomorphism
with respect to the conjugation
$(q,t)\rightarrow (q^{-1},t^{-1})$, as well as an explicit action
of each
on the direct sum $V_*=\bigoplus_{k\geq 0} V_k$.
We will then prove that there is an 
antilinear involution $N$ on $V_*$
which intertwines the two actions (Theorem \ref{mainthm}),
and represents an involutive automorphism on 
a larger algebra $\AA,\AA^*\subset \tilde{\AA}$.
This turns out to be the essential fact that relates the $N_\alpha$
to $\nabla$. 

The compositional shuffle conjecture (Theorem \ref{shuffthm}),
then follows as a simple corollary from the following properties:
\begin{enumerate}
\item There is a surjection coming from $\mathbb{A},\mathbb{A}^*$
\[d_-^{k} : V_k \rightarrow V_0=\Sym[X]\]
which maps a monomial $y_\alpha$ in the $y$ variables
to an element $B_{\alpha}[X;q]$ which is similar to
$C_\alpha[X;q]$, and maps $N_\alpha$ to $D_{\alpha}(X;q,t)$, up to a sign.
\item The involution $N$
commutes with $d_-$, and maps $y_\alpha$ to $N_\alpha$. 
\item The restriction of $N$ to $V_0=\Sym[X]$ 
agrees with $\nabla$ composed with 
a conjugation map which essentially exchanges the
$B_{\alpha}[X;q]$ and $C_{\alpha}[X;q]$.
\end{enumerate}
It then becomes clear that these properties imply \eqref{compshuff}.

While the compositional shuffle conjecture 
is clearly our main application,
the shuffle conjecture has been further generalized in
several remarkable directions such as the
rational compositional shuffle conjecture, and relationships to
knot invariants, double affine Hecke algebras,
and the cohomology of the affine Springer fibers, see
\cite{bergeron2014rational,
gorsky2014torus,gorsky2015refined,negut2013shuffle,hikita2014affine,
shiff2011elliptic, shiff2013elliptic}.
We hope that future applications to 
these fascinating topics will be forthcoming.

\subsection{Acknowledgments}
The authors would like to thank Fran\c{c}ois Bergeron, Adriano Garsia,
Mark Haiman, Jim Haglund, Fernando Rodriguez-Villegas and Guoce Xin
for many valuable discussions on this and related topics.
The authors also acknowledge
the International Center for Theoretical Physics, Trieste, Italy,
at which most of the research for this paper was performed.
Erik Carlsson was also supported by the
Center for Mathematical Sciences and Applications at Harvard University
during some of this period, which he gratefully acknowledges.

\section{The Compositional shuffle conjecture}
\subsection{Plethystic operators}
A $\lambda$-ring is a ring $R$ with a family of ring endomorphisms $(p_i)_{i\in \Z_{>0}}$ satisfying 
\[p_1[x]=x,\quad p_m[p_n[x]]=p_{mn}[x],\quad (x\in R,\quad m,n\in\Z_{>0}).\] 
Unless stated otherwise the endomorphisms are defined by $p_n(x)=x^n$ for each variable $x$ such as $q,t,u,v,z,x_i,y_i$.
The ring of symmetric functions over the $\lambda$-ring $\Q(q,t)$
is a free $\lambda$-ring with 
generator $X=x_1+x_2+\cdots$, and will be denoted $\Sym[X]$.
We will employ the standard notation used for
plethystic substitution defined as follows: 
given an element $F\in \Sym[X]$ and $A$ in some $\lambda$-ring $R$,
the plethystic substitution $F[A]$ is the image of the homomorphism
from $\Sym[X]\rightarrow R$ defined by replacing $p_n$ by $p_n(A)$.
For instance, we would have
\[p_1 p_2[X/(1-q)]=p_1[X] p_2[X](1-q)^{-1}(1-q^2)^{-1}.\]
See \cite{haiman2001polygraphs} for a reference.


The modified Macdonald polynomials \cite{garsia1998explicit} will be 
denoted 
\[\Ht_\mu=t^{n(\mu)} J_{\mu}[X/(1-t^{-1});q,t^{-1}] \in\Sym[X]\]
where $J_\mu$ is the integral form of the Macdonald polynomial
\cite{Mac}, and
\[n(\mu)=\sum_{i} (i-1)\mu_i.\]
The operator $\nabla:\Sym[X]\to\Sym[X]$ is defined by
\begin{equation}
\label{nabdef}
\nabla \Ht_\mu = \Ht_\mu[-1] \Ht_\mu = (-1)^{|\mu|} q^{n(\mu')} t^{n(\mu)} \Ht_\mu.
\end{equation}
Note that our definition differs from the usual one from
\cite{Bergeron99identitiesand}
by the sign $(-1)^{|\mu|}$.
We also have the sequences of operators 
$B_r, C_r:\Sym[X]\rightarrow\Sym[X]$ given by the following formulas:
$$
(B_r F)[X] = F[X-(q-1)z^{-1}] \pExp[-z X]\big|_{z^r},
$$
$$
(C_r F)[X] = -q^{1-r} F[X+(q^{-1}-1)z^{-1}] \pExp[z X]\big|_{z^r},
$$
where $\pExp[X]=\sum_{n=0}^\infty h_n[X]$ is the plethystic exponential and $\vert_{z^r}$ denotes the operation of taking the coefficient of $z^r$ of a Laurent power series. Our definition again 
differs from the one in \cite{haglund2012compositional} by a factor $(-1)^r$.
For any composition $\alpha$, let $C_\alpha$ denote the composition $C_{\alpha_1}\cdots
C_{\alpha_l}$, and similarly for $B_\alpha$.

Finally we denote by $x \mapsto \bar{x}$ the involutive
automorphism of $\Q(q,t)$ obtained by sending $q,t$ to $q^{-1},t^{-1}$. We denote by $\omega$ the $\lambda$-ring automorphism of $\Sym[X]$ obtained by sending $X$ to $-X$ and by $\bar\omega$ its composition with $\bar{*}$, i.e.
\[
(\omega F)[X] = F[-X],\quad (\bar\omega F)[X] = \bar F[-X].
\]

\subsection{Parking functions}
We now recall the combinatorial background to state the shuffle
conjecture, for which we refer to Haglund's book \cite{haglund2008catalan}.
We consider the infinite grid on the top right quadrant of the plane. Its intersection points are denoted as $(i,j)$ with $i,j\in\Z$. For each cell of the grid its coordinates $(i,j)$ are the coordinates of the top right corner. Thus $i=1,2,\ldots$ indexes the columns and $j=1,2,\ldots$ indexes the rows.
Let $\DD$ be the set of Dyck paths of all lengths. A Dyck path of length $n$ is a grid path from $(0,0)$ to $(n,n)$ consisting of North and East steps that stays
above the main diagonal $i=j$. For $\pi\in\DD$ denote by $|\pi|$ its
length $n$. 
For $\pi\in \DD$, let
\[\area(\pi):=\#\Area(\pi),\quad
\Area(\pi):=\left\{(i,j): i<j,\ (i,j) \mbox{ under } \pi\right\}.\]
This is the set of cells between the path and the diagonal.
Let $a_j$ denote the number of cells $(i,j) \in\Area(\pi)$ in the row $j$. The \emph{area sequence} is the sequence $a(\pi)=(a_1,a_2,\ldots,a_n)$ and we have $\area(\pi)=\sum_{j=1}^n a_n$.

Let $(x_1,1), (x_2,2),\ldots, (x_n, n)$ be the cells immediately to the right of the North steps. The sequence $x(\pi)=(x_1,x_2,\ldots,x_n)$ is called the \emph{coarea sequence} and we have $a_j+x_j=j$ for all $j$.

We have the $\dinv$ statistic and the $\Dinv$ set defined by 
\[\dinv(\pi):=\#\Dinv(\pi),\quad \Dinv(\pi):= \Dinv^0(\pi) \cup \Dinv^1(\pi)=\]
\[\left\{(j,j'):1\leq j<j'\leq n,\ a_j=a_{j'}\right\}\cup
\left\{(j,j'):1\leq j'<j\leq n,\ a_{j'}=a_j+1\right\}.\]
For $(j,j')\in\Dinv(\pi)$ we say that $(x_j, j)$ \emph{attacks} $(x_{j'}, j')$.

For any $\pi$, the set of \emph{word parking functions}
associated to $\pi$ is defined by
\[\mathcal{WP}_\pi:=\left\{{w} \in \Z_{>0}^n :
w_{j}>w_{j+1} \;\mbox{whenever}\; x_{j}=x_{j+1}\right\}.\]
In other words, the elements of $\mathcal{WP}_\pi$
are $n$-tuples $w$ of positive integers which, when
written from bottom to top to the right of each North step,
are strictly decreasing on cells such that one is on top
of the other. For any $w$, let 
\[\dinv(\pi,w):=\#\Dinv(\pi,w),\quad 
\Dinv(\pi,w):=\left\{(j,j') \in \Dinv(\pi):w_j>w_{j'}\right\}.\]

We note that both of these conditions differ from the usual notation
in which parking functions are expected to increase rather than
decrease, and in which the inequalities are reversed in the definition
of $\dinv$. This corresponds to choosing the opposite total ordering on
$\Z_{>0}$ everywhere, which does not affect the final answer,
and is more convenient for the purposes of this paper.

Let us call $\alpha=(\alpha_1,...,\alpha_k)=\touch(\pi)$
the \emph{touch composition} of $\pi$ if $\alpha_1,...,\alpha_k$ are the
lengths of the gaps between the points where $\pi$ touches the 
main diagonal starting at the lower left. Equivalently, $\sum_{i=1}^k \alpha_i=n$ and the numbers $1$, $1+\alpha_1$, $1+\alpha_1+\alpha_2$, \ldots $1+\alpha_1+\cdots+\alpha_{k-1}$ are the positions of $0$ in the area sequence $a(\pi)$.

\begin{example}
\label{dyckex}
Let $\pi$ be the following Dyck path of length $8$ described in Figure \ref{dyckpic}.
Then we have 
\[\Area(\pi)=\left\{
(2,3),(2,4),(3,4),(3,5),(3,6),(4,5),(4,6),(5,6),(7,8)\right\},\]
\[\Dinv(\pi)=\left\{(1,2),(1,7),(2,7),(3,8),(4,5)\right\} \cup
\left\{(7,3),(8,4),(8,5)\right\},\]
\[\touch(\pi)=(1,5,2),\quad a(\pi)=(0,0,1,2,2,3,0,1),\]
\[x(\pi)=(1,2,2,2,3,3,7,7)\]
whence $\area(\pi)=9$, $\dinv(\pi)=5+3=8$.
The labels shown above correspond to the vector
$w=(9,5,2,1,5,2,3,2)$, which we can see is an element of
$\mathcal{WP}_\pi$ because we have $5>2>1$, $5>2$, $3>2$.
We then have
\[\Dinv(\pi,w)=\left\{(1,2),(1,7),(2,7)\right\}\cup
\left\{(7,3),(8,4)\right\},\]
giving $\dinv(\pi,w)=5$.

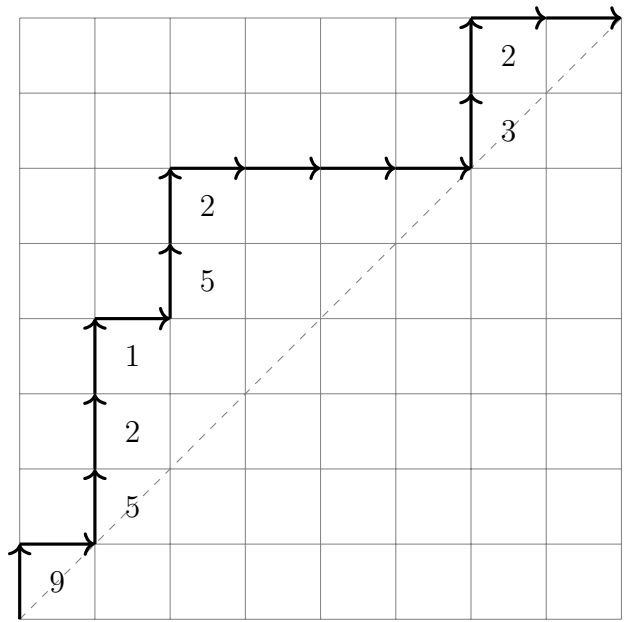
\begin{figure}
\begin{tikzpicture}
\draw[help lines] (0,0) grid (8,8);
\draw[dashed,color=gray] (0,0)--(8,8);
\draw[->,very thick] (0,0)--(0,1);
\draw[->,very thick] (0,1)--(1,1);
\draw[->,very thick] (1,1)--(1,2);
\draw[->,very thick] (1,2)--(1,3);
\draw[->,very thick] (1,3)--(1,4);
\draw[->,very thick] (1,4)--(2,4);
\draw[->,very thick] (2,4)--(2,5);
\draw[->,very thick] (2,5)--(2,6);
\draw[->,very thick] (2,6)--(3,6);
\draw[->,very thick] (3,6)--(4,6);
\draw[->,very thick] (4,6)--(5,6);
\draw[->,very thick] (5,6)--(6,6);
\draw[->,very thick] (6,6)--(6,7);
\draw[->,very thick] (6,7)--(6,8);
\draw[->,very thick] (6,8)--(7,8);
\draw[->,very thick] (7,8)--(8,8);
\node at (.5,.5) {9};
\node at (1.5,1.5) {5};
\node at (1.5,2.5) {2};
\node at (1.5,3.5) {1};
\node at (2.5,4.5) {5};
\node at (2.5,5.5) {2};
\node at (6.5,6.5) {3};
\node at (6.5,7.5) {2};
\end{tikzpicture}
\caption{Example of a Dyck path of size 8.}
\label{dyckpic}
\end{figure}
\end{example}

\subsection{The shuffle conjectures}
For any infinite set of variables $X=\{x_1,x_2,...\}$, let $x_w=x_{w_1}\cdots x_{w_n}$.
In this notation, the original shuffle conjecture \cite{haglund2005diagcoinv} 
states
\begin{conjecture}[\cite{haglund2005diagcoinv}] We have
\[
(-1)^n \nabla e_n=
\sum_{|\pi|=n} t^{\area(\pi)}\sum_{w\in \mathcal{WP}_\pi}
q^{\dinv(\pi,w)} x_{w}.
\]
In particular, the right hand side is symmetric in the $x_i$,
and in $q,t$.
\end{conjecture}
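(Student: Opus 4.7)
The plan is to prove the stronger compositional shuffle conjecture \eqref{compshuff}, from which the statement above follows by summing over compositions $\alpha$ with $|\alpha|=n$ and applying the identity $\sum_{|\alpha|=n} C_\alpha[X;q]=(-1)^n e_n[X]$ on the left-hand side together with the obvious partition of Dyck paths by their touch composition on the right-hand side. Symmetry in $q,t$ of the right-hand side then comes for free since the left-hand side is manifestly symmetric.

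Rather than attacking the compositional identity directly, I would enlarge the setting by adjoining variables $y_1,\ldots,y_k$ and working in the graded space $V_*=\bigoplus_{k\geq 0} V_k$ with $V_k=\Sym[X][y_1,\ldots,y_k]$. The combinatorial generating function $D_\alpha(X;q,t)$ is not amenable to a clean recursion on its own, so the first step is to construct a richer object $N_\alpha\in V_k$, encoding parking-function data together with the extra $y$-variables, which \emph{does} satisfy a recursion that determines it uniquely. This is Theorem \ref{thm:recN}, whose content is to write down an explicit recursive definition combinatorially and to check that a natural specialization $d_-^{k}:V_k\to V_0=\Sym[X]$ recovers $D_\alpha$ up to a controlled sign, while also sending $y_\alpha$ to (a scalar multiple of) $B_\alpha[X;q]$.

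The algebraic side is then set up by introducing two algebras $\AA,\AA^*$ acting on $V_*$, related by an antilinear isomorphism with respect to $(q,t)\mapsto(q^{-1},t^{-1})$, and extending their generators to a larger algebra $\tilde\AA\supset\AA,\AA^*$. The central step (Theorem \ref{mainthm}) is to construct an antilinear involution $N\colon V_*\to V_*$ that intertwines the two actions, sends $y_\alpha$ to $N_\alpha$, and extends to an involutive automorphism of $\tilde\AA$. Once $N$ is in hand, the three properties listed in the introduction close the argument: applying $d_-^{k}$ to $N(y_\alpha)=N_\alpha$ and using that $N$ commutes with $d_-$ identifies $\pm D_\alpha(X;q,t)$ with the image of $B_\alpha[X;q]$ under the restriction of $N$ to $V_0$; property (iii) rewrites this restriction as $\nabla$ composed with the conjugation exchanging $B_\alpha$ and $C_\alpha$, giving $\nabla C_\alpha[X;q]$ and hence \eqref{compshuff}.

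The hard part will be the existence of the intertwining involution $N$. Finding presentations of $\AA$ and $\AA^*$ in which the antilinear symmetry $(q,t)\mapsto(q^{-1},t^{-1})$ is realized by a genuine involution of $V_*$, rather than merely an abstract algebra isomorphism, is the technical core, and is where the connection between the combinatorics of parking functions and the diagonal action of $\nabla$ on modified Macdonald polynomials is forced out. Constructing $N$ will require identifying a complete set of defining relations in $\tilde\AA$, verifying their stability under the conjugation, and checking compatibility with the recursive definition of $N_\alpha$ from Theorem \ref{thm:recN}; this verification is expected to account for the bulk of the technical work.
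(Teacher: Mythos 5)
Your proposal follows essentially the same route as the paper: reduce the original shuffle conjecture to the compositional one via $\sum_{|\alpha|=n}C_\alpha=(-1)^n e_n$, construct the refined $N_\alpha\in V_k$ with its recursion (Theorem \ref{thm:recN}), build the algebras $\AA,\AA^*\subset\tilde\AA$ acting on $V_*$, produce the antilinear intertwining involution $\mathcal{N}$ (Theorem \ref{mainthm}), and close by pushing $\mathcal{N}(y_\alpha)=q^{\sum(\alpha_i-1)}N_\alpha$ down with $d_-^k$ and identifying $\mathcal{N}\big|_{V_0}=\nabla\bar\omega$. Your assessment of where the technical weight lies — the existence of $\mathcal{N}$, a complete set of relations for $\tilde\AA$, and its stability under conjugation — also matches the paper's structure.
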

The stronger compositional shuffle conjecture \cite{haglund2012compositional}
states
\begin{conjecture}[\cite{haglund2012compositional}] For any composition $\alpha$, 
we have
\begin{equation} \label{compshuffeq}
(-1)^n \nabla C_{\alpha}(1)=
\sum_{\touch(\pi)=\alpha} t^{\area(\pi)}\sum_{w\in \mathcal{WP}_\pi}
q^{\dinv(\pi,w)} x_w.
\end{equation}
\end{conjecture}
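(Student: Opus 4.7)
The plan is to replace the symmetric functions $D_\alpha(X;q,t)$ by finer invariants $N_\alpha \in V_k=\Sym[X][y_1,\ldots,y_k]$ living in a larger graded space $V_*=\bigoplus_{k\geq 0} V_k$. In this bigger space one has explicit operators whose commutation relations generate an algebra $\tilde{\AA}$, and the identity \eqref{compshuffeq} will be obtained by recognizing both sides as images of a single element under two maps that intertwine through a distinguished antilinear involution $N$.

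First I would define $N_\alpha$ as a combinatorial generating series over Dyck paths with $\touch(\pi)=\alpha$, refined by auxiliary $y$-data recording column positions of the North steps. Using the assumed Theorem \ref{thm:recN}, the $N_\alpha$ are uniquely determined from $N_\emptyset=1$ by a recursion on $\alpha$. The recursion should come from peeling off the leftmost primitive piece of $\pi$ at its first return to the diagonal and tracking how $\area$ and $\dinv$ redistribute between the peeled piece and the remainder, together with the associated parking-function labels. Next, I would use the $\AA,\AA^*\subset\tilde\AA$-action to construct the surjection $d_-^k:V_k\to V_0=\Sym[X]$, verify that it sends $y_\alpha$ to $B_\alpha[X;q]$ applied to $1$, and deduce that $d_-^k N_\alpha = \pm D_\alpha(X;q,t)$ by matching the recursion for $N_\alpha$ against a recursion for $D_\alpha$ on the level of operators.

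The main step is the construction of the antilinear involution $N:V_*\to V_*$ intertwining the actions of $\AA$ and $\AA^*$ on $V_*$, as asserted in Theorem \ref{mainthm}. To realize $N$, I would prescribe its values on a cyclic vector and extend by the intertwining rule, then check compatibility with every defining relation in $\tilde\AA$; this reduces an infinite combinatorial problem to a finite algebraic verification on generators. The restriction of $N$ to $V_0=\Sym[X]$ must then coincide with $\nabla$ composed with the conjugation $\bar{\ast}$ that swaps $B_\alpha[X;q]$ and $C_\alpha[X;q]$, because both operations are diagonal in the modified Macdonald basis $H_\mu$ with eigenvalues related by $(q,t)\mapsto(q^{-1},t^{-1})$ and a sign $(-1)^{|\mu|}$, which matches the normalization in \eqref{nabdef}.

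The shuffle identity then falls out by chasing the element $y_\alpha$ around a commutative square. Going one way, property 2 gives $N(y_\alpha)=N_\alpha$, and then property 1 gives $d_-^k N_\alpha = \pm D_\alpha(X;q,t)$, the desired right-hand side. Going the other way, property 1 gives $d_-^k(y_\alpha)=B_\alpha(1)$, property 2 says $N$ commutes with $d_-$, and property 3 identifies the $V_0$-restriction of $N$ with $\nabla$ after the conjugation that exchanges $B$ and $C$, producing $\pm\nabla C_\alpha(1)$. The hard part is Theorem \ref{mainthm}: this is the one place where the combinatorics controlling the right-hand side of \eqref{compshuffeq} must be matched to the Macdonald-theoretic data controlling $\nabla$, and everything else amounts to bookkeeping on recursions or manipulations inside $\tilde\AA$. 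Careful tracking of the sign conventions (since the paper's $\nabla$ and $C_r$ both differ from the standard ones by signs) will also be needed to produce exactly the $(-1)^n$ in \eqref{compshuffeq}.
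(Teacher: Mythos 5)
Your outline tracks the paper's architecture faithfully: refine $D_\alpha$ to $N_\alpha\in V_k$, establish the recursion of Theorem~\ref{thm:recN} and the relation $d_-^k N_\alpha = D_\alpha$, build the $\tilde{\AA}$-action on $V_*$, transport the obvious generator-swapping involution of $\tilde{\AA}$ to $V_*\cong\tilde{\AA}e_0/Ie_0$ to get $\mathcal{N}$, and then chase $y_\alpha$ through the diagram. Parts (i) and (ii) of the commutative-square argument are handled just as in the paper.

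The genuine gap is in your justification of Theorem~\ref{mainthm}\ref{nabpart}, the claim that $\mathcal{N}|_{V_0}=\nabla\bar\omega$. You write that this holds ``because both operations are diagonal in the modified Macdonald basis $H_\mu$.'' It is true that $\nabla\bar\omega$ acts diagonally on the $H_\mu$ (with antilinear eigenvalue $(-1)^{|\mu|}$, by \eqref{nabdef} and the $q,t\mapsto q^{-1},t^{-1}$ duality for $H_\mu$), but you give no reason whatsoever that the abstractly-defined involution $\mathcal{N}|_{V_0}$ should be diagonal in that basis, let alone with matching eigenvalues. That is precisely the hard content of the theorem, and asserting it is circular. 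The paper instead characterizes $\nabla$ via its commutation relations with explicit degree-one operators: it checks $D_1=-d_-d_+^*$, $\underline{e}_1=d_-d_+$, derives $\mathcal{N}D_1=-\underline{e}_1\mathcal{N}$ and $\mathcal{N}\underline{e}_1=-D_1\mathcal{N}$, and therefore that $\nabla':=\mathcal{N}\bar\omega$ satisfies $\nabla'(1)=1$, $\nabla'\underline{e}_1=D_1\nabla'$, $\nabla' D_1^*=-\underline{e}_1\nabla'$; it then invokes the result of \cite{garsia1998explicit} that these relations, together with the fact that $\underline{e}_1$ and $D_1^*$ generate $\Sym[X]$ from $1$, pin down $\nabla$ uniquely. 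Without such a characterization (or a concrete computation of $\mathcal{N}(H_\mu)$), your step (iii) does not go through. Replacing your Macdonald-diagonalization claim with the $D_1/\underline{e}_1$ commutation-relation argument would close the gap.
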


\subsection{From $(\area,\dinv)$ to $(\bounce,\area')$}
\label{areadinvbouncesec}
In this paper, we will prove an equivalent version of this conjecture, as  obtained in \cite{loehr2014new}, Theorem 14 by applying the
$(\area,\dinv)$ to $(\bounce,\area')$ bijection from \cite{haglund2005conjectured},
\cite{haglund2008catalan}. We include our construction of this bijection because it seems to be different from the original one, and to demonstrate that it comes naturally from analysis of the attack relation. An important property of our construction is that it comes with a natural lift from Dyck paths to parking functions.

From any pair $\pi\in\DD$, $w\in\mathcal{WP}_\pi$ we will obtain a pair $\pi'\in\DD$, $w'\in\mathcal{WP}'_{\pi'}$ by a procedure described below. After the end of this section we will only work with $\pi', w'$, so we will drop the apostrophe.

The Dyck path $w'$ is obtained as follows: sort the cells $(x_j, j)$ in the \emph{reading
order}, i.e. in increasing order by the corresponding labels $a_j$,
using the row index $j$ to break ties. Equivalently, we read the cells by diagonals from bottom to top, and from left to right in each diagonal.
For instance, for the path $\pi$ from Example \ref{dyckex}, 
the list would be
\begin{equation}
\label{roex}
\left\{(1,1),(2,2),(7,7),(2,3),(7,8),(2,4),(3,5),(3,6)\right\}.
\end{equation}
Let $\sigma_j$ be the position of the cell $(x_j,j)$ in this list. This defines a permutation $\sigma \in S_n$.
In the example case, we would get
\[
\sigma=
\begin{pmatrix}1 & 2 & 3 & 4 & 5 & 6 & 7 & 8 \\
1 & 2 & 4 & 6 & 7& 8 & 3 & 5
\end{pmatrix}
\]
Now we observe that for each $j=1,\ldots,n$ the cell $(x_j, j)$ attacks all the subsequent cells in the reading order whose position is before the position where we would place $(x_j, j+1)$, if it were an element of the list. 

More precisely, there is a unique Dyck path $\pi'$ for which 
\[\Area(\pi')=\sigma(\Dinv(\pi))=
\left\{(\sigma_j,\sigma_{j'}):(j,j')\in \Dinv(\pi)\right\}.\]
The map $\pi\to \pi'$ is the desired bijection. To see the bijectivity one can either use \cite{haglund2008catalan} or see Remark \ref{rem:touch} below.

If $\pi$ is the Dyck path from our example, then $\pi'$ would be given by
the path in Figure \ref{bijdyck}.
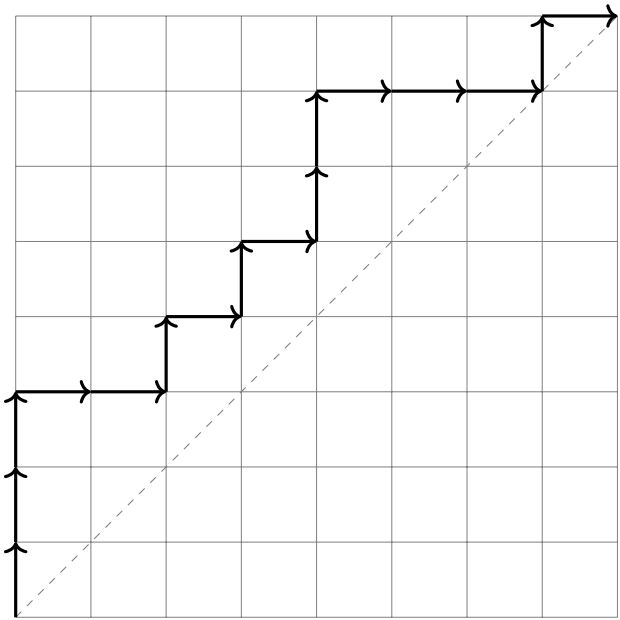
\begin{figure}
\begin{tikzpicture}
\draw[help lines] (0,0) grid (8,8);
\draw[dashed,color=gray] (0,0)--(8,8);
\draw[->,very thick] (0,0)--(0,1);
\draw[->,very thick] (0,1)--(0,2);
\draw[->,very thick] (0,2)--(0,3);
\draw[->,very thick] (0,3)--(1,3);
\draw[->,very thick] (1,3)--(2,3);
\draw[->,very thick] (2,3)--(2,4);
\draw[->,very thick] (2,4)--(3,4);
\draw[->,very thick] (3,4)--(3,5);
\draw[->,very thick] (3,5)--(4,5);
\draw[->,very thick] (4,5)--(4,6);
\draw[->,very thick] (4,6)--(4,7);
\draw[->,very thick] (4,7)--(5,7);
\draw[->,very thick] (5,7)--(6,7);
\draw[->,very thick] (6,7)--(7,7);
\draw[->,very thick] (7,7)--(7,8);
\draw[->,very thick] (7,8)--(8,8);
\end{tikzpicture}
\caption{Image of the path from Figure \ref{dyckpic} under the
$(\area,\dinv)$ to $(\bounce,\area')$  bijection.}
\label{bijdyck}
\end{figure}

The above statistics can be translated into new statistics under this
bijection. First, it is clear from the construction that
$\dinv(\pi)=\area(\pi')$. 
We next explain how to calculate $\area(\pi)$ from $\pi'$:
for any path, we obtain a new Dyck path called the ``bounce path'' as  
follows: start at the origin $(0,0)$, and begin moving North until
contact is made with the first East step of $\pi$. Then start moving East until  
contacting the diagonal. Then move North until contacting the path again,  
and so on. Note that contacting the path means running into the left  
endpoint of an East step, but passing by the rightmost endpoint does  
not count, as illustrated below. The bounce path splits the main diagonal into 
the \emph{bounce blocks}. We number the bounce blocks starting from $0$ and define the 
\emph{bounce sequence} $b(\pi)=(b_1, b_2, \ldots, b_n)$ in such a way that for any $i$ the cell $(i,i)$ belongs to the $b_i$-th block.
We then define  
\[\bounce(\pi'):=\sum_{i=1}^n b_i.\]
Another way to describe this construction is to say that $b_1=0$, $b_{i+1}\in\{b_i, b_i+1\}$ and if $i, i'$ are the smallest indices
for which $b_i=c$ and $b_{i'}=c+1$ for some $c$, then $i'$ is the smallest index with $i'>i$ such that $(i,i')\notin \Area(\pi')$. This description and $\Area(\pi')=\sigma(\Dinv(\pi))$ implies $b_{\sigma_i} = a_i$, hence $\bounce(\pi')=\area(\pi)$. See \cite{haglund2008catalan} for an alternative treatment.

For the path $\pi'$ above, the bounce path is shown in Figure \ref{bouncedyck}
with the original path in gray. The bounce sequence is given by the numbers written under the diagonal. We have 
\[b(\pi')=(0,0,0,1,1,2,2,3),\quad \bounce(\pi')=9=\area(\pi).\]
\begin{figure}
\begin{tikzpicture}
\draw[help lines] (0,0) grid (8,8);
\draw[dashed,color=gray] (0,0)--(8,8);
\draw[->,very thick,color=gray] (0,0)--(0,1);
\draw[->,very thick,color=gray] (0,1)--(0,2);
\draw[->,very thick,color=gray] (0,2)--(0,3);
\draw[->,very thick,color=gray] (0,3)--(1,3);
\draw[->,very thick,color=gray] (1,3)--(2,3);
\draw[->,very thick,color=gray] (2,3)--(2,4);
\draw[->,very thick,color=gray] (2,4)--(3,4);
\draw[->,very thick,color=gray] (3,4)--(3,5);
\draw[->,very thick,color=gray] (3,5)--(4,5);
\draw[->,very thick,color=gray] (4,5)--(4,6);
\draw[->,very thick,color=gray] (4,6)--(4,7);
\draw[->,very thick,color=gray] (4,7)--(5,7);
\draw[->,very thick,color=gray] (5,7)--(6,7);
\draw[->,very thick,color=gray] (6,7)--(7,7);
\draw[->,very thick,color=gray] (7,7)--(7,8);
\draw[->,very thick,color=gray] (7,8)--(8,8);
\draw[->,very thick] (0,0)--(0,1);
\draw[->,very thick] (0,1)--(0,2);
\draw[->,very thick] (0,2)--(0,3);
\draw[->,very thick] (0,3)--(1,3);
\draw[->,very thick] (1,3)--(2,3);
\draw[->,very thick] (2,3)--(3,3);
\draw[->,very thick] (3,3)--(3,4);
\draw[->,very thick] (3,4)--(3,5);
\draw[->,very thick] (3,5)--(4,5);
\draw[->,very thick] (4,5)--(5,5);
\draw[->,very thick] (5,5)--(5,6);
\draw[->,very thick] (5,6)--(5,7);
\draw[->,very thick] (5,7)--(6,7);
\draw[->,very thick] (6,7)--(7,7);
\draw[->,very thick] (7,7)--(7,8);
\draw[->,very thick] (7,8)--(8,8);
\node at (.7,.3) {0};
\node at (1.7,1.3) {0};
\node at (2.7,2.3) {0};
\node at (3.7,3.3) {1};
\node at (4.7,4.3) {1};
\node at (5.7,5.3) {2};
\node at (6.7,6.3) {2};
\node at (7.7,7.3) {3};
\end{tikzpicture}
\caption{The bounce path of the path in Figure \ref{bijdyck}.}
\label{bouncedyck}
\end{figure}
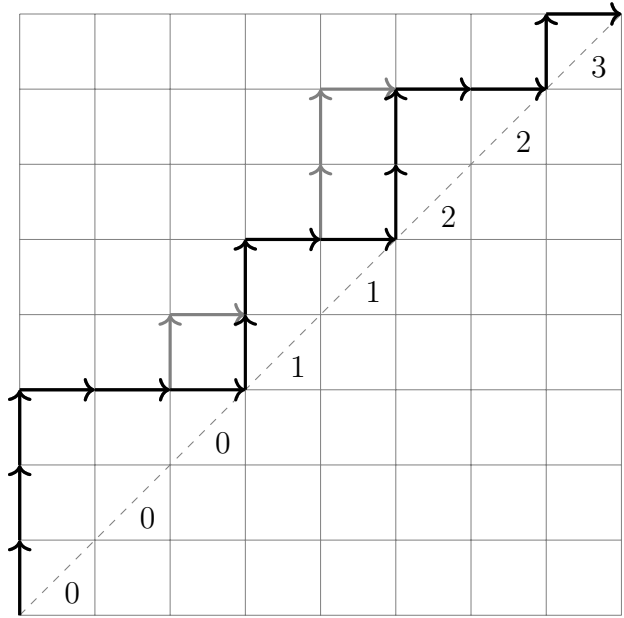

Next, we show how to reconstruct $\touch(\pi)$ from $\pi'$. 
For any path $\pi'$ of length $n$, let $l$ be the number of 
North steps from $(0,0)$
until the first East step, which is the same as the length of the first 
bounce block. Let $\tilde{\pi}$ be the part of the path such that 
$\pi'=N^lE\tilde{\pi}$, the result of beginning with $l$ North steps 
starting at the origin, followed by an East step, followed by 
the contents of $\tilde{\pi}$. 
Define numbers $t_i$ by 
\[t_i:=\bounce\left(N^{i+1}EN^{l-i}E \tilde{\pi}\right),\quad 
0\leq i \leq l.\]
Note that the path $N^{i+1}EN^{l-i}E \tilde{\pi}$ has length $n+1$ for each $i$, and we have 
$t_0=n+\bounce(\pi')$ and then $t_i$ go down to $t_l=\bounce(\pi')$.
Define 
\[
\touch'(\pi'):=(t_0-t_{1},...,t_{l-1}-t_{l}).
\]
\begin{prop}
For every Dyck path $\pi$
\[\touch'(\pi')=\touch(\pi).\]
\end{prop}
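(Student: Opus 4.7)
The plan is to reduce the statement to an explicit closed form for $t_i$ and then verify the claim by analyzing an iterated map. First, the identity $b_{\sigma_i} = a_i$, combined with the fact that the reading order lists the cells of diagonal $0$ first (ordered by row index), gives $\sigma^{-1}(j) = u_j := 1 + \alpha_1 + \cdots + \alpha_{j-1}$ for $1 \le j \le l$. In particular $l = k$ and $\alpha_j = u_{j+1} - u_j$ (with the convention $u_{l+1} := n+1$), so the proposition reduces to
\[
t_{j-1} - t_j \;=\; u_{j+1} - u_j \qquad (j = 1, \dots, l).
\]

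Writing $\pi''_i = N^{i+1}EN^{l-i}E\tilde\pi$ for the path whose bounce defines $t_i$, the next step is to put $t_i = \bounce(\pi''_i)$ into closed form. Let $h \colon \{0, \dots, n-1\} \to \{1, \dots, n\}$ send $x$ to the height of the unique East step of $\pi'$ starting in column $x$; since the path is monotone, $h$ is weakly increasing with $h(x) > x$, so iteration of $h$ is strictly increasing and terminates at $n$. Direct inspection shows that the East-step left endpoints of $\pi''_i$ in columns $x \ge 1$ are obtained from those of $\pi'$ in columns $x-1 \ge 0$ by the shift $(+1,+1)$. Hence after its first bounce block (of length $i+1$), the bounce path of $\pi''_i$ is governed by the iteration $y_0 = i$, $y_k = h(y_{k-1})$ until $y_{r_i} = n$, and a telescoping argument yields
\[
t_i \;=\; r_i\, n - \sum_{k=0}^{r_i - 1} h^k(i).
\]

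The main obstacle is to match $t_{j-1} - t_j$ with $u_{j+1} - u_j$ using this formula. Since $h$ is weakly increasing, the $h$-iterations starting from $j-1$ and $j$ either coincide from step $1$ onwards (when $h(j-1) = h(j)$, in which case the formula immediately gives $t_{j-1} - t_j = 1$) or they diverge and then re-converge at some common value $v^* = h^m(j-1) = h^{k_0}(j)$ with $m > k_0 \ge 1$. In the latter case, the formula reduces the problem to the combinatorial identity
\[
(m - k_0)\,n + 1 - \sum_{s=1}^{m-1} h^s(j-1) + \sum_{s=1}^{k_0 - 1} h^s(j) \;=\; u_{j+1} - u_j.
\]
This is where the real work lies: one must translate iterates of $h$ (which index columns of $\pi'$) back to cells of $\pi$ via $\sigma^{-1}$, and verify that the left-hand side counts the rows of $\pi$ lying in the arch between its $j$-th and $(j+1)$-th touches. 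An appealing alternative is to induct on $l$: peel off the first arch of $\pi$ (of length $\alpha_1$), argue that the bijection sends this decomposition to a corresponding simplification of $\pi'$, and apply the inductive hypothesis to the remainder.
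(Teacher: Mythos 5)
Your approach is genuinely different from the paper's, but it is critically incomplete: the central computation is not carried out. After setting up the closed form $t_i = r_i n - \sum_{k=0}^{r_i-1} h^k(i)$, the argument reduces to a combinatorial identity involving iterates of $h$, and you explicitly acknowledge that ``this is where the real work lies'' and then leave it unverified. The alternative inductive route is only sketched in a sentence. So as written, there is no proof — the difficult step has been relocated, not resolved.

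There are also two unexamined claims in the setup. First, the assertion $\sigma^{-1}(j) = u_j$ (that the first $l$ entries of the reading order are exactly the touch rows, in order) is stated as a consequence of $b_{\sigma_i}=a_i$; it is true, but deserves a line of justification since it is what makes $l=k$ and identifies $\alpha_j$ with $u_{j+1}-u_j$. Second, in the dichotomy ``coincide from step $1$'' versus ``diverge and then re-converge at $v^* = h^m(j-1) = h^{k_0}(j)$ with $m>k_0\geq 1$'', the existence and inequality claim need justification: re-convergence before $n$ is not automatic, and even convergence at $n$ with $m > k_0$ strictly requires an argument. Neither is difficult (weak monotonicity of $h$ with $h(x)>x$ gives that both orbits eventually merge and the orbit from $j-1$ lags), but they are asserted rather than proved.

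For comparison, the paper sidesteps all of this by working on the $\pi$ side. Given the $i$-th touch point $(x,x)$, it splits $\pi = \pi_1 \pi_2$ and forms $\hat{\pi} = N\,\pi_2\,E\,\pi_1$ (a length-$(n+1)$ Dyck path), observes that $\area(\hat{\pi}) = \area(\pi) + (n-x)$, and — this is the key lemma — that the $(\area,\dinv)\to(\bounce,\area')$ map sends $\hat{\pi}$ to exactly $N^{i+1}EN^{l-i}E\tilde{\pi}$. Then $t_i = \bounce(N^{i+1}EN^{l-i}E\tilde{\pi}) = \area(\hat{\pi}) = n - x + \bounce(\pi')$, and the gaps $t_{i-1}-t_i$ read off $\touch(\pi)$ directly. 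This gives a short, bijection-flavored proof with a single structural observation doing all the work, whereas your route requires an explicit analysis of the bounce iteration. If you want to complete your approach, you would need to prove the stated identity by translating $h$-iterates back through $\sigma$ — or better, carry out the inductive peeling of the first arch, since that is essentially the paper's idea in disguise.
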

For instance, in the example above we
would have $l=3$,
\[(t_0,t_1,t_2,t_3)=(17,16,11,9),\quad
\touch'(\pi')=\touch(\pi)=(1,5,2).\]
\begin{proof}
Consider the $i$-th touch point $(x,x)$ of $\pi$ (we count the touch points starting from $0$, i.e. $(0,0)$ is the $0$-th touch point.) It splits $\pi$ into two parts: $\pi_1$ followed by $\pi_2$. Construct a new path $\hat{\pi}$ of length $n+1$ by taking a step North, then following a translated copy of $\pi_2$, then taking a step East, then following a translated copy of $\pi_1$. The new path has length $n+1$ and its area is bigger than the area of $\pi$ by $n-x$. The (area,dinv) to (bounce,area$'$) map applied to $\hat{\pi}$ gives precisely the path $N^{i+1}EN^{l-i}E \tilde{\pi}$. Thus we have
\[
\area(\hat{\pi}) = \bounce(N^{i+1}EN^{l-i}E \tilde{\pi}) = t_i,
\]
\[
\area(\hat{\pi}) = n-x+\area(\pi) = n-x+\bounce(\pi').
\]
So the sizes of the gaps between the touch points of $\pi$ are exactly the differences $t_{i-1}-t_i$.
\end{proof}
\begin{rem}\label{rem:touch}
The construction we have used in the proof above can also be used to prove the bijectivity of the (area,dinv) to (bounce,area$'$) map. Here is an idea of a proof. First, every Dyck path arises as $\hat{\pi}$ above for unique $\pi$ and $i$. On the other hand, every Dyck path can be uniquely written as $N^{i+1}EN^{l-i}E \tilde{\pi}$. Thus iterating the construction we obtain every Dyck path on each side of the (area,dinv) to (bounce,area$'$) map in a unique way.
\end{rem}

Having analyzed the statistics associated to a Dyck path we turn to the analysis of what happens to word parking functions.
The $\dinv$ statistic is straightforward.
For any $w'\in \Z_{>0}^n$, let
\[\inv(\pi',w'):=\#\Inv(\pi',w'),\quad 
\Inv(\pi',w'):=\left\{(i,j) \in \Area(\pi'),\ w'_i>w'_j\right\},\] 
so that
\[\Inv(\pi',w')=\sigma\left(\Dinv(\pi,w)\right),\quad
w'_{\sigma_i}=w_{i}.\] 
For the value of $w$ from Example \ref{dyckex}, we would have
\[w'=(9,5,3,2,2,1,5,2),\quad
\Inv(\pi',w')=\{(1,2),(1,3),(2,3),(3,4),(5,6)\}.\]
In particular, $\inv(\pi',w')=\dinv(\pi,w)=5$.

Finally, we reconstruct the word parking function condition.
A cell $(i,j)$ is called a {\em corner} of $\pi'$
if it is above the path, but both its Southern and Eastern neighbors 
are below the path. Denote the set of corners by $c(\pi')$. 
One can check that the corners of $\pi'$ correspond to pairs of cells with one on top 
of the other in $\pi$. For instance, from our example we have 
$c(\pi')=\{(2,4),(3,5),(4,6),(7,8)\}$. 
More precisely, we have
\[
c(\pi') := \{(\sigma_j, \sigma_{j+1}):\;1\leq j<n,\;x_j=x_{j+1}\}.
\] 
We therefore define 
\begin{equation}
\label{WP1}
\mathcal{WP}'_{\pi'}:=\left\{w' \in \Z_{>0}^n :
w'_i>w'_j \mbox{ for } (i,j) \in c(\pi')\right\},
\end{equation}
so that the condition $w\in \mathcal{WP}_\pi$ is equivalent to $w'\in \mathcal{WP}'_{\pi'}$.

Putting this together, we have
\begin{prop}
For any composition $\alpha$ we have
\begin{equation}\label{eq:Dalpha}
D_{\alpha}(q,t)=
\sum_{\touch'(\pi)=\alpha} t^{\bounce(\pi)}\sum_{w \in \mathcal{WP}'_\pi} 
q^{\inv(\pi,w)} x_w,
\end{equation}
where $D_\alpha(q,t)$ is the right hand side of \eqref{compshuffeq}.
\end{prop}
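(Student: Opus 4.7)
The identity is essentially a repackaging of the bijective correspondences that have been carefully built up throughout this subsection. The strategy is to rewrite the right-hand side of \eqref{compshuffeq} by substituting the data $(\pi,w)$ with its image $(\pi',w')$ under the $(\area,\dinv)\to(\bounce,\area')$ bijection, and to verify that every ingredient transforms correctly. Because $w'$ is merely a permutation of the entries of $w$, the monomial $x_w$ (which is presumably the missing factor on the right, or else simply set to $1$ here) is preserved; it therefore plays no role and I will focus on the statistics.

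First I would collect the three statistic identities already established above: (a) $\area(\pi)=\bounce(\pi')$, which is the content of the identity $b_{\sigma_i}=a_i$ derived in the discussion of the bounce path; (b) $\dinv(\pi,w)=\inv(\pi',w')$, which is immediate from $\Inv(\pi',w')=\sigma(\Dinv(\pi,w))$ together with the labeling convention $w'_{\sigma_i}=w_i$; and (c) $\touch(\pi)=\touch'(\pi')$, which is the preceding proposition and is the deepest of the three. Next I would verify that the two substitutions $\pi\mapsto\pi'$ and $w\mapsto w'$ are honest bijections between the indexing sets. Bijectivity of $\pi\mapsto\pi'$ is Remark \ref{rem:touch}. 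For the parking functions, once $\pi'$ is fixed, specifying $w'$ is equivalent to specifying $w$ via $w_j=w'_{\sigma_j}$, and the condition $w\in\mathcal{WP}_\pi$ reads $w_j>w_{j+1}$ whenever $x_j=x_{j+1}$, which by the identity $c(\pi')=\{(\sigma_j,\sigma_{j+1}):x_j=x_{j+1}\}$ is exactly the defining condition $w'\in\mathcal{WP}'_{\pi'}$ in \eqref{WP1}.

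Given these ingredients, the proof itself is a one-line substitution: reindex the double sum in \eqref{compshuffeq} by $(\pi',w')$ in place of $(\pi,w)$, and use (a), (b), (c), and the characterization of $\mathcal{WP}'_{\pi'}$ to identify each factor with the corresponding factor in \eqref{eq:Dalpha}. The main obstacle in this entire development is not this final assembly but the prior verification of $\touch(\pi)=\touch'(\pi')$ and of the bijectivity of $\pi\mapsto\pi'$; once those are in hand the present proposition is essentially a matter of bookkeeping.
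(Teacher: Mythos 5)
Your proof is correct and is precisely the assembly that the paper's phrase ``Putting this together'' refers to: the paper gives no separate proof, having already established each ingredient — $\area(\pi)=\bounce(\pi')$, $\dinv(\pi,w)=\inv(\pi',w')$, $\touch(\pi)=\touch'(\pi')$, the identification $c(\pi')=\{(\sigma_j,\sigma_{j+1}):x_j=x_{j+1}\}$ giving $w\in\mathcal{WP}_\pi\Leftrightarrow w'\in\mathcal{WP}'_{\pi'}$, and bijectivity of $\pi\mapsto\pi'$ via Remark~\ref{rem:touch}. You also correctly flag what is evidently a typo in \eqref{eq:Dalpha} (the factor $x_w$ is missing on the right-hand side, as confirmed by the later rewriting $D_\alpha(q,t)=\sum_{\touch'(\pi)=\alpha}t^{\bounce(\pi)}\chi(\pi,0)$), and observe correctly that $x_w=x_{w'}$ since $w'$ is a permutation of $w$, so this does not affect the argument.
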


\section{Characteristic functions of Dyck paths}
\subsection{Simple characteristic function}
We are going to study the summand in $D_{\alpha}(q,t)$ as a 
function of $\pi$. It is convenient to first introduce a simpler object where we drop the assumption $w\in\mathcal{WP}'_\pi$ and instead sum over all labellings. Given a Dyck path of length $n$, define $\chi(\pi)\in \Sym[X]$ as follows:
\begin{defn}\label{defn.chi}
$$
\chi(\pi): = \sum_{w \in \Z_{>0}^{n}} q^{\inv(\pi, w)} x_w.
$$
\end{defn}
If $i<j$ and $(i,j)$ is under $\pi$, i.e. $(i,j)\in\Area(\pi)$ we say that $i$ and $j$ \emph{attack} each other. It is not obvious from the definition that $\chi(\pi)$ defines a symmetric function.
In fact, as we point out in Remark \ref{chilltrem}, $\chi(\pi)$ is actually an example of an LLT polynomial,
but we present a proof in our setup here:
\begin{prop}\label{prop.sym}
The expression for $\chi(\pi)$ above is symmetric in the variables $x_1, x_2, x_3,\ldots$, so that Definition \ref{defn.chi} correctly defines an element of $\Sym[X]$.
\end{prop}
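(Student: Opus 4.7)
The plan is to adapt the argument from the proof of Lemma~10.2 in \cite{haglund2005combinatorial}. Since adjacent transpositions generate $S_\infty$, it suffices to show that swapping $x_k \leftrightarrow x_{k+1}$ leaves $\chi(\pi)$ invariant for every $k \geq 1$. For a labeling $w \in \Z_{>0}^n$, set $T = T(w) := \{i : w_i \in \{k, k+1\}\}$. If $i \notin T$ and $j \in T$, then $w_i$ is either strictly larger than both $k$ and $k+1$, or strictly smaller than both, so the contribution of the pair $(i,j)$ to $\inv(\pi, w)$ is independent of whether $w_j = k$ or $w_j = k+1$. Grouping labelings by the values outside $T$ (and by $T$ itself, which is invariant under any swap of $k$ with $k+1$ inside $T$), it therefore suffices to exhibit, for each fixed $T \subseteq \{1,\ldots,n\}$, an involution on $\{k, k+1\}^T$ that preserves the restricted $\inv$ count while exchanging the total multiplicities of $k$ and $k+1$.

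The involution is constructed as in Haglund's argument. Read the elements of $T$ in the \emph{reading order} already used earlier in the paper for listing cells of $\pi$. Scanning the resulting sequence $(\psi(i_1), \ldots, \psi(i_{|T|}))$, iteratively pair an occurrence of $k+1$ with the immediately following occurrence of $k$, deleting each matched pair from further consideration; what remains is an unmatched subsequence of the form $k^a (k+1)^b$. The involution $\iota$ replaces this unmatched block by $k^b (k+1)^a$ and leaves matched pairs in place. That $\iota$ is an involution, and that it swaps the multiplicities of $k$ and $k+1$ on the unmatched positions, is immediate from the construction.

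The main obstacle is verifying that $\iota$ preserves $\inv(\pi,\cdot)$. This is done by a case analysis on attacking pairs $(i,j) \in \Area(\pi)$ with $i, j \in T$: pairs both of whose endpoints are matched, or both unmatched with the same value, contribute invariantly; pairs with one matched and one unmatched endpoint cancel in total thanks to the reading-order bookkeeping used to produce the matching; and pairs between unmatched indices carrying different values redistribute correctly under the global swap of unmatched values. The structural input needed for each case is the compatibility of the attack relation for a Dyck path with the reading order, which is precisely the property used in \cite[Lemma~10.2]{haglund2005combinatorial}; since the definitions of $\Area$, attack, and $\inv$ in our setting are formally identical to those in that reference, the argument transfers to $\chi(\pi)$ without modification.
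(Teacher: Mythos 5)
Your proof follows the same route as the paper, which dispatches the proposition with a one-line citation of Lemma~10.2 of Haglund--Haiman--Loehr; you have simply spelled out the bracket-matching involution behind that lemma. One imprecision worth fixing: for $\chi(\pi)$ the order in which to scan positions should just be the natural order $1<2<\cdots<n$, under which each $i$ attacks a contiguous interval $\{\,j>i:(i,j)\in\Area(\pi)\,\}$ (this interval property is exactly the structural input the matching argument needs), rather than the $(a_j,j)$-sorted ``reading order'' of Section~2, which serves the $(\area,\dinv)$-to-$(\bounce,\area')$ bijection and is not the right order here.
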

\begin{proof}
We take the main idea from the proof of Lemma 10.2 from \cite{haglund2005combinatorial}. First note that for each $n$ the correspondence $\pi \to \Area(\pi)$ is a bijection between the set of Dyck paths of length $n$ and the set of subsets $R\subset\{(j,j'):1\leq j<j'\leq n\}$ satisfying the following property:
\begin{equation*}\tag{*}
\text{if $j<j'<j''$ and $(j,j'')\in R$, then both $(j,j')$ and $(j',j'')$ are in $R$.}
\end{equation*}
In the proof we will work with $R$ instead of $\pi$ and write $\chi(R,n)$, $\inv(R,w)$ instead of $\chi(\pi)$, $\chi(\pi,w)$. For each subset $S\subset\{1,2,\ldots,n\}$, $S=\{s_1<s_2<\cdots<s_{\#S}\}$ let $R_S=\{(j,j'):\,(s_j,s_{j'})\in R\}$. Then $R_S$ again satisfies (*). 

It is enough to show that $\chi(R,n)$ is unaffected by interchange of $x_i$ and $x_{i+1}$ for any two neighboring indices $i, i+1$. For each subset $S\subset\{1,2,\ldots,n\}$ and a function $f:\{1,\ldots,n\}\to \Z_{>0}\setminus\{i,i+1\}$ let $\chi_{S,f}$ be the sub-sum of $\chi(R,n)$ corresponding to sequences $w$ where the set of positions of $i$ and $i+1$ in $w$ is $S$, and the values of $w$ outside of $S$ are given by $f$. It is enough to show that $\chi_{S,f}$ is symmetric in $x_i$ and $x_{i+1}$. We have
\[
\inv(R,w) = \#\{(j,j')\in R_S: w_{s_j}=i+1,\,w_{s_{j'}}=i\} + \inv_{S,f},
\]
where $\inv_{S,f}$ depends only on $S$ and $f$, but doesn't depend on the positions of $i$ or $i+1$ in $S$. Thus we have
\[
\chi_{S,f} =  \chi(R_S,\#S)(x_i,x_{i+1}) \; q^{\inv_{S,f}} \prod_{j\notin S} x_{f(j)},
\]
where
\[
\chi(R,k)(x_1,x_2) = \sum_{w\in\{1,2\}^k} q^{\inv(R, w)} x_w.
\]

So it is enough to show that $\chi(R,k)(x_1,x_2)$ is symmetric in $x_1$ and $x_2$ for any $k$, $R$ satisfying (*). We proceed by induction on the size of $R$, the base case $\#R=0$ being trivial. Fix $k$ and $R\neq \varnothing$ and pick $(a,b)\in R$ maximal in the sense that $(a,j)\notin R$ for all $j>b$ and $(j,b)\notin R$ for all $j<a$. Then $R'=R\setminus\{(a,b)\}$ satisfies (*). Consider the difference
\[
\chi(R,k) - \chi(R',k) = \sum_{w\in\{1,2\}^k} (q^{\inv(R, w)}-q^{\inv(R', w)}) x_w.
\]
The coefficient $(q^{\inv(R, w)}-q^{\inv(R', w)})$ is non-zero only if $w_a=2$ and $w_b=1$. If this happens, then $\inv(R,w)=\inv(R',w)+1$. Consider contributions of pairs of the form $(a,j)$, $(b,j)$, $(j,a)$, $(j,b)$ to $\inv(R',w)$. Since $w_a=2$ pairs $(j,a)$ do not contribute anything, and pairs $(a,j)$ contribute $1$ precisely when $a<j<b$ and $w_j=1$. Since $w_b=1$ pairs $(b,j)$ do not contribute, and pairs $(j,b)$ contribute $1$ precisely when $a<j<b$ and $w_j=2$. The net contribution is the number of $j$ such that $a<j<b$, which is $b-a-1$. We see that $\inv(R',w)=b-a-1 + \inv(R'_{S},w_S)$, where $S=\{1,\ldots,k\}\setminus\{a,b\}$, $w_S$ denotes the sequence $w$ with the entries $w_a$ and $w_b$ removed. Thus we obtain
\[
\chi(R,k) - \chi(R',k) = (q-1) x_1 x_2 \chi(R'_S,k-2).
\]
By the induction hypothesis $\chi(R',k)$ and $\chi(R'_S,k-2)$ are symmetric in $x_1, x_2$. Hence $\chi(R,k)$ is also symmetric.
\end{proof}

Another way to formulate this property is as follows: for a composition $c_1+c_2+\cdots+c_k=n$ consider the multiset $M_c = 1^{c_1} 2^{c_2} \ldots k^{c_k}$. Consider the sum
$$
\sum_{w \text{\; a permutation of $M_c$}} q^{\inv(\pi, w)}.
$$
Proposition \ref{prop.sym} simply says that this sum does not depend on the order of the numbers $c_1, c_2, \ldots, c_k$, or equivalently on the linear order on the set of labels. If $\lambda$ is the partition with components $c_1,c_2,\ldots,c_k$, then this sum computes the coefficient of the monomial symmetric function $m_\lambda$ in $\chi(\pi)$,
so we have (set $h_c=h_{c_1}\cdots h_{c_k}$)
\begin{equation}\label{eq.sym}
(\chi(\pi), h_c) = \sum_{w \text{\; a permutation of $M_c$}} q^{\inv(\pi, w)}.
\end{equation}

We list here a few properties of $\chi$ so that the reader has a feeling of what kind of object it is.

For a Dyck path $\pi$ denote by $\pi^{op}$ the reversed Dyck path, i.e. the path obtained by replacing each North step by East step and each East step by North step and reversing the order of steps. Reversing also the order of the components of $c$ in (\ref{eq.sym}) we see 

\begin{prop}
$$
\chi(\pi) = \chi(\pi^{op}).
$$
\end{prop}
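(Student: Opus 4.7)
The plan is to extract both sides via formula \eqref{eq.sym} and exhibit a bijection that identifies the summands. Geometrically, the involution $\pi \mapsto \pi^{op}$ is the reflection of the path in the anti-diagonal $i+j=n$; under this reflection the area cell with top-right corner $(i,j)$ of $\pi$ corresponds to the cell with top-right corner $(n+1-j,\,n+1-i)$ of $\pi^{op}$, giving an explicit bijection $\Area(\pi) \leftrightarrow \Area(\pi^{op})$. In particular $\area(\pi) = \area(\pi^{op})$.

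Fix a composition $c = (c_1, \ldots, c_k)$ and write $\mathrm{rev}(c) = (c_k, \ldots, c_1)$. For a word $w$ which is a permutation of $M_c$, I would define the \emph{double reversal} $w'$ by $w'_m := (k+1) - w_{n+1-m}$; then $w'$ is a permutation of $M_{\mathrm{rev}(c)}$, and $w \mapsto w'$ is a bijection between the two sets of words. Combining with the cell bijection, one checks that $(i, j) \in \Inv(\pi, w)$ if and only if $(n+1-j,\,n+1-i) \in \Inv(\pi^{op}, w')$: the label complementation $w_m \mapsto (k+1) - w_m$ flips the inequality $w_i > w_j$, while the simultaneous index reversal $m \mapsto n+1-m$ swaps the roles of the two entries of the pair, so the net effect is to produce an inversion at the transposed area cell. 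Hence $\inv(\pi, w) = \inv(\pi^{op}, w')$.

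Summing over permutations of $M_c$ and using \eqref{eq.sym} on both sides,
\[
(\chi(\pi), h_c) \eq \sum_{w} q^{\inv(\pi, w)} \eq \sum_{w'} q^{\inv(\pi^{op}, w')} \eq (\chi(\pi^{op}), h_{\mathrm{rev}(c)}) \eq (\chi(\pi^{op}), h_c),
\]
where the last equality holds because $h_c = h_{\mathrm{rev}(c)}$ in $\Sym[X]$. Since $\{h_\lambda\}$ is a basis of $\Sym[X]$ and the Hall pairing is non-degenerate, $\chi(\pi) = \chi(\pi^{op})$ follows. I expect the only delicate step is verifying that $\pi^{op}$ really is the anti-diagonal reflection and then bookkeeping carefully enough that the two reversals combine to preserve the inversion set; everything else is routine.
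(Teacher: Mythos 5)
Your proof is correct and is essentially the argument the paper has in mind: the paper's one-line justification (``reversing also the order of the components of $c$'' in \eqref{eq.sym}) is precisely the anti-diagonal reflection of area cells combined with the reversal-plus-complementation of the word, which you carry out explicitly. The bijection $(i,j)\mapsto(n+1-j,\,n+1-i)$ together with $w\mapsto w'$ preserves the inversion count, and the final step is just $h_c=h_{\mathrm{rev}(c)}$, as you note.
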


Proofs of the following two statements are essentially taken from \cite{haglund2005combinatorial}.
\begin{prop}\label{prop.omega}
$$
\bar\omega \chi(\pi) = (-1)^{|\pi|} q^{-\area(\pi)} \chi(\pi).
$$
\end{prop}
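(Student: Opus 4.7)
The heart of the identity is already present in the excerpt preceding the proposition, so my plan is essentially to re-package it into an equation in $\Sym[X]$ and then apply the bar involution. Starting from the formula
\[
(\chi(\pi)[-X],\, h_c[X]) \;=\; (-1)^{|\pi|} \sum_{w \text{ a permutation of } M_c} q^{\area(\pi) - \inv(\pi,w)},
\]
which has been derived via the reversal-of-labels bijection $w_i \mapsto k+1-w_i$ combined with the inclusion-exclusion of Chapter 4 of \cite{haglund2005combinatorial}, I would factor $q^{\area(\pi)}$ out of the sum and recognize the remaining generating function as $\overline{(\chi(\pi), h_c[X])}$, using that conjugation sends $q \mapsto q^{-1}$ and that $(\chi(\pi), h_c) = \sum_w q^{\inv(\pi,w)}$ by \eqref{eq.sym}. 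Together with $\chi(\pi)[-X] = \omega\chi(\pi)$, this rewrites the starting identity as
\[
(\omega\chi(\pi),\, h_c[X]) \;=\; (-1)^{|\pi|}\, q^{\area(\pi)}\, \overline{(\chi(\pi),\, h_c[X])}.
\]

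Next, since the Hall pairing is nondegenerate on $\Sym[X]$ with $\{h_\lambda\}$ and $\{m_\lambda\}$ dual bases, the fact that this equation holds for every composition $c$ is enough to deduce the symmetric-function equality
\[
\omega\chi(\pi) \;=\; (-1)^{|\pi|}\, q^{\area(\pi)}\, \overline{\chi(\pi)}.
\]

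To finish, I would apply $\overline{\;\cdot\;}$ to both sides. Since $\omega$ is defined over $\Z$ it commutes with conjugation, so the left side becomes $\overline{\omega\chi(\pi)} = \bar\omega\chi(\pi)$; on the right, $\overline{q^{\area(\pi)}} = q^{-\area(\pi)}$ and $\overline{\overline{\chi(\pi)}} = \chi(\pi)$ by involutivity, yielding precisely $\bar\omega\chi(\pi) = (-1)^{|\pi|} q^{-\area(\pi)} \chi(\pi)$. I do not anticipate any real obstacle: all combinatorial content has already been done, and what remains is just to translate a pairing identity into a symmetric-function identity and conjugate.
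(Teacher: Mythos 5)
Your proposal is correct and follows exactly the same line of reasoning as the paper: start from the identity $(\chi(\pi)[-X],h_c)=(-1)^{|\pi|}\sum_w q^{\area(\pi)-\inv(\pi,w)}$ obtained from the $e_c$-pairing plus label reversal, recognize the sum as $q^{\area(\pi)}\overline{(\chi(\pi),h_c)}$, pass from pairings to symmetric functions by nondegeneracy, and conjugate. The only difference is that you spell out the final ``which implies'' step of the paper, which the paper leaves to the reader.
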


\begin{prop}\label{prop.q1}
$$
\chi(\pi)[(q-1)X] = (q-1)^{|\pi|} \sum_{w \in \Z_{>0}^{|\pi|}\; \text{no attack}} q^{\inv(\pi, w)} x_w,
$$
where ``no attack'' means that the summation is only over vectors $w$ such that $w_i\neq w_j$ for $(i,j)\in\Area(\pi)$.
\end{prop}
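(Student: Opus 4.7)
This is an instance of the plethystic identity for symmetric generating functions attached to an attack relation, proved in \cite[Ch.~4, Lem.~5.1]{haglund2005combinatorial}, and I plan to follow that argument verbatim. Two structural features of $\chi(\pi)$ make the HHL proof apply: (i) by Proposition~\ref{prop.sym}, $\chi(\pi)$ is a genuine element of $\Sym[X]$, so the substitution $X\mapsto (q-1)X$ is well-defined and acts on power sums by $p_k\mapsto (q^k-1)p_k$; and (ii) the statistic $\inv(\pi,w)=\#\{(i,j)\in\Area(\pi):w_i>w_j\}$ depends on $w$ only through strict comparisons on the pairs of $\Area(\pi)$, so $\chi(\pi)$ fits the general framework of \cite{haglund2005combinatorial} of a symmetric generating function associated with an abstract attack set.

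With those in place I would expand $\chi(\pi) = \sum_\lambda b_\lambda\, m_\lambda$ using the coefficient formula \eqref{eq.sym}, namely $b_\lambda = \sum_{w \text{ a perm of }M_\lambda} q^{\inv(\pi,w)}$, and match both sides in the monomial basis. On the right-hand side, the non-attacking condition ``$w_i \neq w_j$ for all $(i,j)\in\Area(\pi)$'' is handled by inclusion-exclusion over subsets $T\subseteq \Area(\pi)$ of attacking pairs which are \emph{forced} to carry equal labels: one writes the indicator of non-attacking as an alternating sum over $T$ of indicators that $w$ is constant on the connected components generated by $T$. Collecting by the resulting set-partition of $\{1,\ldots,n\}$ and comparing with the power-sum expansion of $\chi(\pi)[(q-1)X]$ obtained from $p_k[(q-1)X]=(q^k-1)p_k$, the identity reduces to the standard plethystic formula connecting monomial symmetric functions to signed counts of labelings with prescribed equalities. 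Equivalently, this can be packaged as a sign-reversing involution on pairs (labeling, chosen colliding attack pair): collisions on a pair $(i,j)\in\Area(\pi)$ with $w_i=w_j$ contribute $0$ to $\inv(\pi,w)$, so toggling the canonically first such pair in or out of $T$ flips the sign while preserving both $x_w$ and $q^{\inv(\pi,w)}$. All collision contributions cancel, leaving exactly $(q-1)^n$ times the sum over non-attacking labelings.

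The main obstacle is the combinatorial bookkeeping — ensuring that the inclusion-exclusion weights on the $p_k\mapsto (q^k-1)p_k$ side match the factor $(q-1)^n$ on the non-attacking side, keeping track of $x_w$ and the $q$-statistic simultaneously. Since this is exactly the same inclusion-exclusion device already invoked earlier in this section to pass from $(\chi(\pi),h_c)$ to $(\chi(\pi),e_c)$, and since it depends on $\Area(\pi)$ only through its role as an abstract attack relation, the argument of \cite[Ch.~4]{haglund2005combinatorial} transfers to our setting without modification, and no new difficulty arises.
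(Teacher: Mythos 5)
Your proposal matches the paper's own argument: the paper's proof is literally a one-line citation to Chapter 4 and Lemma 5.1 of \cite{haglund2005combinatorial}, together with the remark that those results depend only on the abstract attack relation (here $\Area(\pi)$) and therefore transfer verbatim, which is exactly the transfer principle you invoke. The inclusion--exclusion and sign-reversing-involution sketch you add is a reasonable unpacking of what the cited superization lemma actually does; the paper simply leaves those details to the reader.
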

\begin{proof}[Proofs]
We follow Chapter 4 of \cite{haglund2005combinatorial}. For an integer $n$ and a subset $D\subset\{1,\ldots,n-1\}$ Gessel's \emph{quasisymmetric function} $Q_{n,D}$ in $x=(x_1,x_2,\ldots)$ is given by
\[
Q_{n,D}(x) = \sum_{\substack{w_1\leq \cdots \leq w_n \\ w_i=w_{i+1} \Rightarrow i\notin D}} x_w.
\]
For each sequence $w\in\Z_{>0}^n$ its \emph{standardization} is the unique permutation $\Std(w)\in S_n$ such that
\[
w_i<w_j\;\text{or}\;(w_i=w_j\;\text{and}\;i<j) \Leftrightarrow \Std(w)_i<\Std(w)_j.
\]
In other words, $\Std(w)$ sorts pairs $(w_i,i)$ in lexicographic order. We notice the following properties:
\begin{equation}\label{eq:std}
\Inv(\pi, w) = \Inv(\pi,\Std(w)),\qquad \sum_{w:\Std(w)=\sigma} x_w = Q_{n,\Des(\sigma^{-1})}(x)\qquad(\sigma\in S_n),
\end{equation}
where $\Des(\sigma) = \{i:\sigma_i>\sigma_{i+1}\}$ is the descent set of $\sigma$. Thus the sum $\chi(\pi)$ splits as follows:
\[
\chi(\pi) = \sum_{\sigma\in S_n} q^{\inv(\pi, \sigma)} Q_{n,\Des(\sigma^{-1})}.
\]
Since $\chi(\pi)$ is symmetric by Proposition \ref{prop.sym}, we can apply Proposition 4.2 in \cite{haglund2005combinatorial}. Let $\cA$ be the ``super'' alphabet 
\[
\cA=\Z_+\cup \Z_-=\{1,2,3,\ldots,\bar{1},\bar{2},\bar{3},\ldots\}
\]
consisting of \emph{positive letters} $i\in Z_+$ and \emph{negative letters} $\bar{i}$. Let 
\[
z_i = x_i\quad(i\in\Z_+),\qquad z_{\overline{i}} = -y_i \quad(\bar{i}\in\Z_-).
\]
Then we have the following expression for $X=\sum_i x_i$, $Y=\sum_i y_i$:
\[
\chi(\pi)[X-Y] = \sum_{\sigma\in S_n} q^{\inv(\pi, \sigma)} \tilde Q_{n,\Des(\sigma^{-1})}(x,y),
\]
where
\[
\tilde Q_{n,D} = \sum_{\substack{w_1\leq \cdots \leq w_n \\ w_i=w_{i+1},\,w_i\in\Z_+ \Rightarrow i\notin D \\ w_i=w_{i+1},\,w_i\in\Z_- \Rightarrow i\in D}} z_w,
\]
and the summation is over the sequences of elements of $\cA$. The statement holds for an arbitrary choice of total ordering on $\cA$. We work with the following ordering:
\[
1<\bar{1}<2<\bar{2}<\cdots.
\]
We extend the definitions of $\Std$, $\Inv$, $\inv$ to sequences of elements of $\cA$ as follows:
\[
w_i<w_j\;\text{or}\;(w_i=w_j\in \Z_+\;\text{and}\;i<j) \;\text{or}\;(w_i=w_j\in \Z_-\;\text{and}\;i>j) 
\]
\[
\Leftrightarrow \Std(w)_i<\Std(w)_j,
\]
\[
\inv(\pi,w):=\#\Inv(\pi,w),\quad 
\Inv(\pi,w):=\left\{(i,j) \in \Area(\pi):\,w_i>w_j\;\text{or}\; w_i=w_j\in\Z_-\right\},
\]
so that the properties \eqref{eq:std} are satisfied. Therefore we have
\begin{equation}\label{eq:chi difference}
\chi(\pi)[X-Y] = \sum_{w\in\cA^n} q^{\inv(\pi,w)} z_w.
\end{equation}

Setting $X=0,Y=-X$ we obtain
$$
\chi(\pi)[-X] = (-1)^n \sum_{w\in\Z_{>0}^n} q^{\inv'(\pi, w)} x_w,
$$
where $\inv'(\pi, w)$ is the number of non-strict inversions of $w$ under the path,
\[\inv'(\pi,w):=\#\left\{(i,j) \in \Area(\pi),\ w_i\geq w_j\right\}.\] 
Reversing the order of labels we have
$$
\chi(\pi)[-X]  = (-1)^{|\pi|} \sum_{w\in\Z_{>0}^n} q^{\area(\pi) - \inv(\pi, w)} x_w,
$$
which implies Proposition \ref{prop.omega}.

To prove Proposition \ref{prop.q1} we set $X=qX$, $Y=X$ in \eqref{eq:chi difference}. Applying the involution from the proof of Lemma 5.1 in \cite{haglund2005combinatorial} (flipping the sign of the last label that attacks a label with the same absolute value) we see that the terms for $w\in\cA^n$ such that $|w_i|=|w_j|$ for some $(i,j)\in\Area(\pi)$ cancel out. In the remaining terms we have $|w_i|\neq |w_j|$ whenever $(i,j)\in\Area(\pi)$. Therefore the comparison between $w_i$ and $w_j$ depends only on $|w_i|$ and $|w_j|$. So we can first sum over sequences in $\Z_{>0}$, and then over the choices of signs. The latter summation produces an overall factor of $(q-1)^n$ and we obtain Proposition \ref{prop.q1}.
\end{proof}

\subsection{Weighted characteristic function}
To study the summand of $D_{\alpha}(q,t)$ in (\ref{eq:Dalpha}) as a 
function of $\pi$ we introduce a more general characteristic function.
Given a function $\mathrm{wt}:c(\pi)\to R$ on the set of corners of
some Dyck path $\pi$ of size $n$, let
\begin{equation}
\label{chiwtdef}
\chi(\pi, \mathrm{wt}) := \sum_{w \in \Z_{>0}^{n}} q^{\inv(\pi,w)} 
\left(\prod_{(i,j)\in c(\pi),\;w_i\leq w_j} \mathrm{wt}(i,j) \right)x_w,
\end{equation}
so in particular $(\ref{eq:Dalpha})$ becomes
\[D_{\alpha}(q,t)=\sum_{\touch'(\pi)=\alpha} t^{\bounce(\pi)} \chi(\pi,0).\]
For a constant function $\mathrm{wt}=1$ we recover the simpler characteristic function
\begin{equation}
\label{chidef}
\chi(\pi,1)=\chi(\pi).
\end{equation}
It turns out that we can express the weighted characteristic function $\chi(\pi, \mathrm{wt})$ in terms the unweighted one evaluated at different paths. In particular this implies that $\chi(\pi, \mathrm{wt})$ is symmetric too.

\begin{rem}
\label{chilltrem}
If $\pi'$ is the image of $\pi$ under the bijection from section 
\ref{areadinvbouncesec}, then we have that 
$\chi(\pi',0)=F_{\pi}(X;q)$, where $F_{\pi}(X;q)$ are the \emph{path symmetric
functions} from page 95 of Haglund's book \cite{haglund2008catalan}. As Haglund explains,
these functions are examples of LLT polynomials of vertical strips, using
the description of Bylund and Haiman. In fact, $\chi(\pi',1)$ is also 
an example of an LLT polynomial, but for a disjoint union of single boxes:
\[\chi(\pi',1)=\LLT_{[a_n+1]/[a_n],...,[a_1]/[a_1]}(X;q),\]
where $(a_1,...,a_n)=a(\pi)$ is the area sequence.
\end{rem}

\begin{prop}
We have that $\chi(\pi,{\mathrm{wt}})$ is symmetric in the $x_i$
variables, and so defines an element of $\Sym[X]$.
\end{prop}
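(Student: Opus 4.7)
The plan is to express $\chi(\pi,\mathrm{wt})$ as a $\Q(q)$-linear combination of the unweighted characteristic functions $\chi(\pi_S,1)$ for various Dyck paths $\pi_S$ obtained from $\pi$ by filling in subsets of its corners, and then invoke Proposition \ref{prop.sym}.

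The key algebraic identity, verified case by case on $w_i>w_j$ versus $w_i\le w_j$, is that for every corner $(i,j)\in c(\pi)$,
\[
\mathrm{wt}(i,j)\,[w_i\le w_j] + [w_i>w_j] \eq \alpha_{(i,j)} + \beta_{(i,j)}\,q^{[w_i>w_j]},
\]
where $\alpha_{(i,j)} = \frac{q\,\mathrm{wt}(i,j)-1}{q-1}$ and $\beta_{(i,j)} = \frac{1-\mathrm{wt}(i,j)}{q-1}$. For $S\subseteq c(\pi)$, let $\pi_S$ be the path with $\Area(\pi_S)=\Area(\pi)\cup S$. Using the factorization $q^{\inv(\pi,w)}=\prod_{(a,b)\in\Area(\pi)}q^{[w_a>w_b]}$ and distributing the identity above across all corners, one gets
\[
q^{\inv(\pi,w)}\!\prod_{(i,j)\in c(\pi)}\!\!\bigl(\mathrm{wt}(i,j)[w_i\le w_j]+[w_i>w_j]\bigr) \eq \sum_{S\subseteq c(\pi)}\Bigl(\prod_{(i,j)\in S}\beta_{(i,j)}\Bigr)\Bigl(\prod_{(i,j)\notin S}\alpha_{(i,j)}\Bigr) q^{\inv(\pi_S,w)},
\]
since $c(\pi)\cap\Area(\pi)=\emptyset$. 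Summing over $w\in\Z_{>0}^n$ with weight $x_w$ gives
\[
\chi(\pi,\mathrm{wt}) \eq \sum_{S\subseteq c(\pi)}\Bigl(\prod_{(i,j)\in S}\beta_{(i,j)}\Bigr)\Bigl(\prod_{(i,j)\notin S}\alpha_{(i,j)}\Bigr)\,\chi(\pi_S,1).
\]
Each $\chi(\pi_S,1)$ lies in $\Sym[X]$ by Proposition \ref{prop.sym}, so $\chi(\pi,\mathrm{wt})$ does too.

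The only point that needs to be justified is that $\pi_S$ is a bona fide Dyck path for every $S\subseteq c(\pi)$. The elements of $c(\pi)$ are precisely the outer addable corners of the Young diagram $\Area(\pi)$, so they sit at pairwise distinct rows and columns, and each is strictly above the diagonal since $(i,j-1),(i+1,j)\in\Area(\pi)\subseteq\{i<j\}$ forces $i<j$. Consequently $\Area(\pi)\cup S$ is again a Young diagram contained in $\{(i,j):i<j\}$, which is exactly the set of areas of Dyck paths. This is the only mildly non-trivial step; the rest is bookkeeping.
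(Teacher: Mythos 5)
Your proof is correct and rests on the same underlying computation as the paper's: split each corner's contribution according to $w_i>w_j$ versus $w_i\le w_j$ and trade the weight for $q$-powers. The difference is purely one of packaging. The paper proves a one-corner recursion (its equation~\eqref{cornereq}), noting that $\chi(\pi,\mathrm{wt})$ is a $\Q(q)$-combination of $\chi(\pi,\mathrm{wt}_1)$ (same path, one weight reset to $1$) and $\chi(\pi',\mathrm{wt}_2)$ (corner filled in), and then iterates. You instead distribute that identity across all corners simultaneously to obtain the closed formula
\[
\chi(\pi,\mathrm{wt})=\sum_{S\subseteq c(\pi)}\Bigl(\prod_{(i,j)\in S}\tfrac{1-\mathrm{wt}(i,j)}{q-1}\Bigr)\Bigl(\prod_{(i,j)\notin S}\tfrac{q\,\mathrm{wt}(i,j)-1}{q-1}\Bigr)\chi(\pi_S),
\]
which is exactly what the paper's recursion unwinds to and which the paper records only in the special case $\mathrm{wt}=0$ (equation~\eqref{chi02chi1}). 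Your version buys a cleaner global statement and sidesteps the bookkeeping about which corners of $\pi'$ are new versus inherited; the paper's recursion avoids having to argue that the whole family $\pi_S$, $S\subseteq c(\pi)$, consists of Dyck paths, since ``turn one corner inside out'' is manifestly a local and legal move. Both are fine.

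One small imprecision in your verification that each $\pi_S$ is a Dyck path: you assert that the southern and eastern neighbours $(i,j-1)$, $(i+1,j)$ of a corner lie in $\Area(\pi)$ and hence satisfy $i<j$. That is not quite right when the corner sits immediately above a touch point, i.e.\ at $(j-1,j)$: then both neighbours $(j-1,j-1)$ and $(j,j)$ are diagonal cells, below the path but not in $\Area(\pi)$ (which requires $i<j$ strictly). The needed conclusion $i<j$ is nevertheless true and follows directly from $(i+1,j)$ being below a Dyck path, which forces $i+1\le j$. With that patch, your ``distinct rows and distinct columns'' observation correctly shows that filling any subset of corners is unambiguous and preserves the staircase property $x_1\le x_2\le\cdots\le x_n$, $x_j\le j$, so each $\pi_S$ is indeed a Dyck path.
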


\begin{proof}
Let $\pi$ be a Dyck path, and let $(i,j)\in c(\pi)$ be one of its
corners. We denote by $\mathrm{wt}_1$ the weight on $\pi$ which is
obtained from $\mathrm{wt}$ by setting the weight of $(i,j)$ to
$1$. Let $\pi'$ be the Dyck path obtained from $\pi$ by turning the
corner inside out, in other words the Dyck path of smallest area 
which is both above $\pi$ and above $(i,j)$. Let $\mathrm{wt}_2$ be
the weight on $\pi'$ which coincides with $\mathrm{wt}$ on all corners
of $\pi'$ which are also corners of $\pi$ and is $1$ on other
corners. We claim that
\begin{equation}
\label{cornereq}
\chi(\pi, \mathrm{wt}) = \frac{q \mathrm{wt}(i,j) - 1}{q-1} \chi(\pi, \mathrm{wt}_1) + \frac{1 - \mathrm{wt}(i,j)}{q-1} \chi(\pi', \mathrm{wt}_2). 
\end{equation}
To see this, notice that if we group the terms on the right hand
side, then both sides may be written as a sum over vectors
$w\in\mathbb{Z}_{>0}$. Split both sums according to terms in
which $w_{i}>w_{j}$ resulting in an additional factor of $q$, or 
$w_i\leq w_j$ resulting in an additional weight factor. It is easy to
check that both sums agree on both the left and right sides.

The result now follows because we may recursively express any
$\chi(\pi,{\mathrm{wt}})$ in terms of $\chi(\pi)$, which we have
already remarked is symmetric.

\end{proof}



\begin{example}
\label{easydyck}
In particular, we can use this to extract $\chi(\pi,0)$ from
$\chi(\pi',1)$ for all $\pi'$. 
If $S\subset c(\pi)$ is any
subset of the set of corners, let $\pi_S\in \mathbb{D}$ denote the path
obtained by flipping the corners that are in $S$. Then 
equation \eqref{cornereq} implies that
\begin{equation}
\label{chi02chi1}
\chi(\pi,0)=(1-q)^{-|c(\pi)|} \sum_{S \subset c(\pi)} (-1)^{|S|}\chi(\pi_S,1).
\end{equation}
For instance, let $\pi$ be the Dyck path in Figure \ref{easydyckfig}
\begin{figure}
\begin{tikzpicture}
\draw[help lines] (0,0) grid (3,3);
\draw[dashed, color=gray] (0,0)--(3,3);
\draw[->,very thick] (0,0)--(0,1);
\draw[->,very thick] (0,1)--(0,2);
\draw[->,very thick] (0,2)--(1,2);
\draw[->,very thick] (1,2)--(2,2);
\draw[->,very thick] (2,2)--(2,3);
\draw[->,very thick] (2,3)--(3,3);
\end{tikzpicture}
\caption{}
\label{easydyckfig}
\end{figure}
Then setting $x_i=0$ for $i>3$ reduces formula
\eqref{chiwtdef} to a finite sum over 27 terms, from which we can deduce that
\[\chi(\pi)=m_3+(2+q)m_{21}+(3+3q)m_{111}=s_3+(1+q)s_{21}+qs_{111}.\]
Similarly, if $\pi'=\pi_{\{(1,2)\}}$ we have
\[\chi(\pi')=s_3+2qs_{21}+q^2s_{111}.\]
By formula \eqref{chi02chi1}, we obtain
\[\chi(\pi,0)=(1-q)^{-1}\left(\chi(\pi)-\chi(\pi')\right)=s_{21}+qs_{111}.\]


\end{example}

\begin{example}
We can check that the Dyck path from Example \ref{easydyck} is the 
unique one satisfying $\touch'(\pi)=(1,2)$, and that 
$\bounce(\pi)=1$. 
Therefore, using the calculation that followed we have that 
\[D_{(2,1)}(q,t)=t\chi(\pi,0)=ts_{21}+qts_{111}\]
which can be seen to agree with $\nabla C_1 C_2(1)$. 
\end{example}

\begin{example}

Though we will not need it, this weighted characteristic function
can be used to describe an interesting reformulation
of the formula for the modified Macdonald polynomial given in \cite{haglund2005combinatorial}.
Let $\mu=(\mu_1\geq \mu_2 \geq \cdots \geq \mu_l)$ be a partition of size $n$. Let us list the cells of $\mu$ in the reading order: 
$$
(l,1), (l,2), \ldots, (l, \mu_l), (l-1,1), \ldots, (l-1, \mu_{l-1}) ,\ldots, (1,1),\ldots, (1,\mu_1). 
$$
Denote the $m$-th cell in this list by $(i_m, j_m)$. 

We say that a cell $(i,j)$ attacks all cells which are after $(i,j)$ and before $(i-1,j)$. Thus $(i,j)$ attacks precisely $\mu_i-1$ following cells if $i>1$ and all following cells if $i=1$. Next construct a Dyck path $\pi_\mu$ of length $n$ in such a way that $(m_1, m_2)$ with $m_1<m_2$ is under the path if and only if $(i_{m_1}, j_{m_1})$ attacks $(i_{m_2}, j_{m_2})$. More specifically, the path begins with $\mu_l$ North steps, then it has $\mu_l$ pairs of steps East-North, then $\mu_{l-1}-\mu_l$ North steps followed by $\mu_{l-1}$ East-North pairs and so on until we reach the point $(n-\mu_1, n)$. We complete the path by performing $\mu_1$ East steps. 

Note that the corners of $\pi_\mu$ precisely correspond to the pairs of
cells $(i,j), (i-1, j)$. We set the weight of such a corner to
$q^{\mathrm{arm}(i,j)} t^{-1-\mathrm{leg}(i,j)}$ and denote the weight
function thus obtained by $\mathrm{wt}_\mu$. Note that in our
convention for $\chi(\pi,\mathrm{wt})$ we should count non-inversions in
the corners, while in \cite{haglund2005combinatorial} they count
``descents,'' which translates to counting inversions in the
corners. Taking this into account, we obtain a translation of their
 Theorem 2.2:
$$
\Ht_\mu = q^{-n(\mu')+\binom{\mu_1}{2}} t^{n(\mu)} \chi(\pi_\mu, \mathrm{wt}_\mu). 
$$
\end{example}




\section{Raising and lowering operators}

Now let $\DD_{k,n}$ be the set of Dyck paths from $(0,k)$ to 
$(n,n)$, which we will call partial Dyck paths, and let $\DD_k$ be their union 
over all $n$. For 
$\pi\in\DD_{k,n}$ let $|\pi|=n-k$ denote the number of North steps. 
Unlike $\DD$, the union of the sets $\DD_k$ over all $k$ is closed under the operation of
adding a North or East step to the beginning of the path, and
any Dyck path may be created in such a way starting with the empty path
in $\DD_0$. This is the set of paths that we will develop a recursion for.
More precisely, we will define an extension of the function 
$\chi$ to a map from $\DD_k$ to a new vector space $V_k$, 
and prove that certain operators on these vector spaces commute with
adding North and East steps.

Given a polynomial $P$ depending on variables $u, v$ define
\[(\Delta_{uv} P)(u,v) = \frac{(q-1) v P(u,v) + (v - q u) P(v,u)}{v - u},\]
\[(\Delta_{uv}^* P)(u,v) = \frac{(q-1) u P(u,v) + (v - q u) P(v,u)}{v - u}.\]
We can easily check that $\Delta_{uv}^* = q \Delta_{uv}^{-1}$.
We can recognize these operators as a simple modification of
Demazure-Lusztig operators. The following can be checked by direct computation:
\begin{prop} We have the following relations:
\label{Delprop}
$$
(\Delta_{uv} - q) (\Delta_{uv} + 1) = 0,\qquad (\Delta_{uv}^* - 1) (\Delta_{uv}^* + q) = 0,
$$
$$
\Delta_{uv}\Delta_{vw}\Delta_{uv}=\Delta_{vw}\Delta_{uv}\Delta_{vw},\qquad
\Delta_{uv}^*\Delta_{vw}^*\Delta_{uv}^*=\Delta_{vw}^*\Delta_{uv}^*\Delta_{vw}^*.
$$
\end{prop}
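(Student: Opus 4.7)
My plan is to verify both identities by direct algebraic computation on the defining formulas, reducing the starred versions to the unstarred ones via the already-noted identity $\Delta_{uv}^* = q\Delta_{uv}^{-1}$. The two parts are independent: the quadratic (Hecke) relation is a two-variable check, and the braid relation is a three-variable check.

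For the quadratic relation, I would compute $\Delta_{uv}^2 P(u,v)$ for a generic polynomial $P$ by applying the definition twice. Setting $Q = \Delta_{uv}P$, one finds
\[Q(v,u) = \frac{-(q-1)u P(v,u) + (qv-u) P(u,v)}{v-u},\]
and combining the terms of $\Delta_{uv}Q$ over the common denominator $(v-u)^2$ separates the result into a $P(v,u)$-coefficient equal to $(q-1)(v-qu)(v-u)$ and a $P(u,v)$-coefficient equal to $(q-1)^2 v^2 + (v-qu)(qv-u)$. A brief polynomial manipulation shows that the latter equals $(q-1)^2 v(v-u) + q(v-u)^2$, which matches the numerator of $[(q-1)\Delta_{uv} + q]P$. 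This gives $\Delta_{uv}^2 = (q-1)\Delta_{uv} + q$, i.e. $(\Delta_{uv}-q)(\Delta_{uv}+1)=0$. Under $\Delta_{uv}^* = q\Delta_{uv}^{-1}$, the eigenvalues $q$ and $-1$ of $\Delta_{uv}$ are sent to $1$ and $-q$, so $(\Delta_{uv}^*-1)(\Delta_{uv}^*+q)=0$.

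For the braid relation I would write $\Delta_{uv} = \frac{(q-1)v}{v-u} + \frac{v-qu}{v-u} s_{uv}$, where $s_{uv}$ swaps $u$ and $v$, and similarly for $\Delta_{vw}$. Expanding both sides of $\Delta_{uv}\Delta_{vw}\Delta_{uv} = \Delta_{vw}\Delta_{uv}\Delta_{vw}$ yields $\Q(q)(u,v,w)$-linear combinations of the six permutations of $(u,v,w)$, so the identity reduces to checking six scalar equalities of rational functions in $u,v,w,q$, which is a finite computation. Conceptually, this is the braid relation in the polynomial representation of the finite Hecke algebra of $S_3$, of which $\Delta_{uv}$ (by virtue of its quadratic relation above) is a Demazure--Lusztig-type generator. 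The starred braid relation then follows by taking the inverse of both sides of the unstarred one and multiplying through by $q^3$, using $\Delta_{uv}^* = q\Delta_{uv}^{-1}$. The main obstacle is this braid identity: the three-variable expansion produces many terms, and keeping the verification transparent requires a careful organization exploiting the $S_3$-symmetry rather than a blind symbolic expansion.
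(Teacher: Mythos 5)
Your proposal is correct and matches the paper's approach, which simply asserts that the relations ``can be checked by direct computation.'' Your added organization---verifying $\Delta_{uv}^2 = (q-1)\Delta_{uv} + q$ explicitly, deducing the starred relations from the unstarred ones via $\Delta_{uv}^* = q\Delta_{uv}^{-1}$, and decomposing $\Delta_{uv}$ into scalar plus swap for the braid check---is a sensible way to make that computation tractable and does not deviate from the paper's intent.
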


\begin{defn}
Let $V_k=\Sym[X]\otimes \Q[y_1,y_2,\ldots,y_{k}]$,
and let 
\[T_i = \Delta_{y_i y_{i+1}}^* : V_{k} \rightarrow
V_{k},\quad i=1,\ldots,k-1.\]
Define operators
\[d_{+} : V_k \rightarrow V_{k+1},
\quad d_- : V_{k} \rightarrow V_{k-1}\]
by
\begin{equation}
\label{dpdef}
(d_{+} F)[X] = T_1 T_2\cdots T_k \left(F[X+(q-1)y_{k+1}]\right),
\end{equation}
%
 and
\begin{equation}
\label{dmdef}
(d_- F)[X] = -F[X-(q-1)y_k] \pExp[-y_k^{-1} X] |_{y_k^{-1}}
\end{equation}
for $F\in V_k$.
\end{defn}
\begin{rem}
Note that the operator $d_-$ is related to the $B_i$ operators as follows:
\[
d_-(y_k^i F)= -B_{i+1} F
\]
for $F\in V_k$ which do not depend on $y_k$. 
\end{rem}

We now claim the following theorem:
\begin{thm}
\label{dpthm}
For any Dyck path $\pi$ of size $n$, 
let $\eps_1\cdots \eps_{2n}$ denote the 
corresponding sequence
of plus and minus symbols where a plus denotes an
east step, and a minus denotes a north step reading
$\pi$ from bottom left to top right. Then
\[\chi(\pi)=d_{\eps_1}\cdots d_{\eps_{2n}}(1)\]
as an element of $V_0=\Sym[X]$.
\end{thm}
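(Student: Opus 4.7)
The plan is to prove the theorem by induction on path length via a more general statement about partial Dyck paths. For each $\pi\in\DD_{k,n}$ I would define an extension $\tilde\chi(\pi)\in V_k$ of the characteristic function, so that $\tilde\chi(\pi)=\chi(\pi)$ when $k=0$, and then verify two compatible recursions corresponding to prepending either a North or East step to the beginning of $\pi$: namely $\tilde\chi(N\pi)=d_-\tilde\chi(\pi)$ for $N\pi\in\DD_{k-1,n}$, and $\tilde\chi(E\pi)=d_+\tilde\chi(\pi)$ for $E\pi\in\DD_{k+1,n+1}$, together with the base case $\tilde\chi(\emptyset)=1\in V_0$. Since $d_{\eps_1}\cdots d_{\eps_{2n}}$ applies its factors from right to left, this composition builds the Dyck path $\pi$ by prepending its steps in reverse order, so the two recursions plus the base case yield the theorem by induction on $2n$.

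Concretely, I would define $\tilde\chi(\pi)$ by completing $\pi\in\DD_{k,n}$ to a full Dyck path $\hat\pi$ of size $n$ by prepending $k$ North steps from $(0,0)$ to $(0,k)$, labeling those $k$ initial North steps with the formal variables $y_1,\ldots,y_k$ (treated, for the purpose of counting inversions, as distinct ordered symbols larger than every $x_j$), and summing over $x$-labels on the $n-k$ remaining North steps of $\hat\pi$:
\[
\tilde\chi(\pi)\;=\;y_1 y_2\cdots y_k\,\sum_{w\in\Z_{>0}^{n-k}} q^{\inv(\hat\pi,(y_1,\ldots,y_k,w))}\,x_w.
\]
The fact that this is a well-defined element of $V_k$ symmetric in the $x_i$'s should follow by the argument of Proposition 3.2.

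For the North recursion, prepending a North step converts the topmost phantom column (the one carrying $y_k$) into a genuine North step whose label is summed over $\{x_1,x_2,\ldots\}$; the power of $y_k$ appearing in $\tilde\chi(\pi)$ records the number of attacks between that phantom cell and the $x$-labeled cells, and the identity $d_-(y_k^i F)=-B_{i+1}F$ then follows from the plethystic formula for the Hall--Littlewood creation operator $B_r$. For the East recursion, prepending an East step introduces a fresh phantom column whose label the operator conventionally calls $y_{k+1}$; the Demazure--Lusztig composition $T_1\cdots T_k$ transports $y_{k+1}$ into its geometrically correct position relative to the existing $y_i$'s, picking up the right inversion factors by Proposition 4.1, while the plethystic shift $X\mapsto X+(q-1)y_{k+1}$, after expanding $\pExp$, accounts combinatorially for every possible pattern of attacks between the new phantom cell and the $x$-labeled cells below.

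The main obstacle is the East recursion, which mixes plethystic substitution with the Demazure--Lusztig action in a way that does not correspond to any single combinatorial move. My strategy is to use the symmetry of $\tilde\chi$ in the $x_i$ variables together with the braid and quadratic relations of Proposition 4.1 to reduce the identity to a local statement about the interaction between the new East step and a short initial segment of $\pi$, and then verify the reduced statement by direct expansion of $\pExp$ against the combinatorial sum defining $\tilde\chi(E\pi)$, separating contributions according to whether the new phantom label is strictly greater than, equal to, or strictly less than each real label it attacks. Once both recursions are in hand, applying them $2n$ times starting from the base case concludes the proof.
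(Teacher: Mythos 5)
You have set up the right skeleton---define an extension of $\chi$ to partial Dyck paths, prove $N$- and $E$-recursions compatible with $d_-$ and $d_+$, and induct---and this is indeed the paper's strategy. But the concrete definition of $\tilde\chi$ you propose does not satisfy either recursion. As you have written it, $\tilde\chi(\pi)$ is always of the form $y_1\cdots y_k\cdot F$ with $F\in\Sym[X]$ having no $y$-dependence, since each phantom label $y_i$ occurs exactly once and the weight $q^{\inv}$ depends only on comparisons. The smallest case already breaks: for $\pi=\emptyset\in\DD_0$, your $\tilde\chi(E)=y_1$, while $d_+(\tilde\chi(\emptyset))=d_+(1)=1$; and $\chi(NE)=e_1$, while $d_-(\tilde\chi(E))=d_-(y_1)=-B_2(1)=-e_2$. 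Your own description of the $N$-step---``the power of $y_k$ $\ldots$ records the number of attacks''---is inconsistent with the formula, which forces that power to always be $1$.

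The missing ingredient is a mechanism allowing the $y$-exponents to vary, and the paper builds it in with two moves absent from your proposal. First, one works with the ``no-attack'' sum via Proposition \ref{prop.q1}, which is related to $\chi(\pi)$ by the plethystic substitution $X\mapsto (q-1)X$. Second, for $\pi\in\DD_{k,n}$ the special labels $1,\ldots,k$ are pinned to the first $k$ positions but are also allowed to reappear in positions $>k$ subject only to the no-attack constraint; those recurrences generate the higher powers of $y_i$, and the inverse plethystic substitution $X\mapsto X/(q-1)$ together with the normalization $(q-1)^{|\pi|}(y_1\cdots y_k)^{-1}$ in equation \eqref{chikdef} recovers the correct element of $V_k$. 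Your plan of using the $T_i$ operators to ``transport $y_{k+1}$ into position'' for the $E$-step is close to the paper's swapping lemma (Proposition \ref{prop:swapping}), so the intuition there is sound, but it needs to be applied to the correct underlying object; as it stands, the central definition is wrong and the induction cannot start.
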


\begin{example}
Let $\pi$ be the Dyck path from Example \ref{easydyck}.
We have that
\[d_- d_- d_+d_+d_-d_+(1)=
d_- d_- d_+d_+d_-(1)=d_- d_- d_+d_+(s_1)=\]
\[d_- d_- d_+\left(s_1+(q-1)y_1\right)=
d_- d_- \left(s_1+(q-1)(y_1+y_2)\right)=\]
\[d_- \left(s_2+s_{11}+(q-1)s_1y_1\right)=s_3+(1+q)s_{21}+qs_{111},\]
which agrees with the value calculated for $\chi(\pi)$.
\end{example}

Combining this result with equation \eqref{chi02chi1} implies the
following:
\begin{cor}
\label{chi0cor}
The following procedure computes $\chi(\pi,0)$: start with $1\in \Sym[X]=V_0$, follow the path from right to left applying $\frac{1}{q-1}[d_{-}, d_{+}]$ for each corner of $w$, and $d_{-}$ ($d_{+}$) for each North (resp. East) step which is not a side of a corner. 
\end{cor}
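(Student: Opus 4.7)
The plan is to directly combine Theorem \ref{dpthm} with the already-established identity \eqref{chi02chi1}:
\[
\chi(\pi,0) = (1-q)^{-|c(\pi)|} \sum_{S \subset c(\pi)} (-1)^{|S|} \chi(\pi_S).
\]
By Theorem \ref{dpthm}, each $\chi(\pi_S)$ on the right-hand side can be written as $d_{\eps'_1}\cdots d_{\eps'_{2n}}(1)$, where $\eps'_1\cdots \eps'_{2n}$ is the step sequence of $\pi_S$ read from bottom-left to top-right (with $+$ for East, $-$ for North, operators applied right-to-left).

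The first step is to identify corners with local step-sequence patterns. A corner $(i,j) \in c(\pi)$ is, by construction, a cell where the path goes East then North around the lower-left of the cell; equivalently, it corresponds to some pair of consecutive entries $\eps_k\eps_{k+1} = +-$ in the step sequence of $\pi$. Flipping this corner replaces the pair $+-$ by $-+$ at positions $k,k+1$ and does not touch any other step. Hence, for any $S \subset c(\pi)$, the operator word associated to $\pi_S$ by Theorem \ref{dpthm} agrees with the word for $\pi$ at all non-corner positions, and independently swaps $d_+d_-$ for $d_-d_+$ at each corner $c \in S$.

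The second step is to factor the sum. Writing the operator product position by position, at a non-corner step we simply keep $d_+$ or $d_-$, while at each corner $c$ the alternating sum over $\{\text{not flipped, flipped}\}$ contributes
\[
(+1)\cdot d_+ d_- + (-1)\cdot d_- d_+ \eq d_+ d_- - d_- d_+.
\]
After applying the overall normalization $(1-q)^{-|c(\pi)|}$ and distributing one factor $(1-q)^{-1}$ to each corner, this local contribution becomes
\[
\frac{d_+ d_- - d_- d_+}{1-q} \eq \frac{[d_-,d_+]}{q-1},
\]
which is exactly the operator prescribed by the corollary at a corner. Assembling everything, we get $\chi(\pi,0) = \Phi(1)$, where $\Phi$ is the product of $\tfrac{1}{q-1}[d_-,d_+]$ at each corner and $d_\pm$ at each non-corner step, applied in right-to-left order along $\pi$, which is the asserted procedure.

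There is no real obstacle: once Theorem \ref{dpthm} is in hand and \eqref{chi02chi1} has been derived, the corollary is a purely bookkeeping consequence. The only point that requires verification is the claim that flipping a corner is genuinely a local swap $+-\mapsto -+$ in the step sequence, which follows immediately from the geometric definition of $c(\pi)$ and of the flipped path $\pi_S$.
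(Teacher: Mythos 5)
Your argument is correct and follows the same route the paper intends: the corollary is presented as an immediate consequence of combining Theorem \ref{dpthm} with \eqref{chi02chi1}, and you carry out precisely the bookkeeping needed, using that corners correspond to disjoint East--North ($+-$) patterns in the step word so the alternating sum factors corner by corner into $(q-1)^{-1}(d_-d_+-d_+d_-)=\tfrac{1}{q-1}[d_-,d_+]$. (One small slip: with the paper's convention that a cell $(i,j)$ is labeled by its top-right lattice point, the path turns at the cell's lower-\emph{right} corner $(i,j-1)$, not its lower-left; this does not affect the argument.)
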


\subsection{Rank experiment}
The proof of Theorem \ref{dpthm} will be divided in several parts.
However, before we proceed to the proof of Theorem \ref{dpthm}, we would like to
explain why we expected such a result to hold and how we obtained it. 
In fact, the definition of $\chi_k$ from equation \eqref{chikdef} in the proof below actually came first,
and was discovered using computer experimentation, as we now explain.

First note that the number of Dyck paths of length $n$ is given by the Catalan number $C_n=\frac{1}{n+1} \binom{2n}{n}$ which grows exponentially with $n$. The dimension of the degree $n$ part of $\Sym[X]$ is the number of partitions of size $n$, which grows subexponentially. For instance, for $n=3$ we have $5$ Dyck paths, but only $3$ partitions. Thus there must be linear dependences between different $\chi(\pi)$. 

Now fix a partial Dyck path $\pi_1\in \DD_{k,n}$. For each partial Dyck path $\pi_2\in\DD_{k,n'}$ we can reflect $\pi_2$ and concatenate it with $\pi_1$ to obtain a full Dyck path $\pi_2^{op}\pi_1$ of length $n+n'-k$. We may then consider its character $\chi(\pi_2^{op}\pi_1)$. We keep $n$, $\pi_1$ fixed and vary $n'$, $\pi_2$, thus obtaining a map $\varphi_{\pi_1}:\DD_k\to\Sym[X]$. The map $\pi_1\to \varphi_{\pi_1}$ is a map from $\DD_k$ to the vector space of maps from $\DD_k$ to $\Sym[X]$, which is very high dimensional, because both the set $\DD_k$ is infinite and $\Sym[X]$ is infinite dimensional. A priori, it could be the case that the images of the elements of $\DD_{k,n}$ in $\mathrm{Maps}(\DD_k,\Sym[X])$ are linearly independent. However, computer experiments convinced us that it is not the case, and that there should be a vector space $V_{k,n}$ whose dimension is generally smaller than the size of $\DD_{k,n}$.
In fact, by restricting $n'$ to be bounded by some arbitrary but large enough cutoff value, 
we were able to predict that the dimension of this space stabilizes to a very simple formula, which is 
the dimension of $V_{k,n}$, the degree $n-k$ component of $V_{k}$ as it is defined above.

We therefore predicted the existence of a commutative diagram
\[
\begin{tikzcd}
\DD_{k,n} \arrow{r}{\chi_{k,n}} \arrow{d}{\varphi} & V_{k,n}\arrow{ld}\\
\mathrm{Maps}(\DD_k,\Sym[X]) & 
\end{tikzcd}
\]
for some map $\chi_{k,n}$, whose image spans all of $V_{k,n}$.
This ultimately led to the guess of the formula for $\chi_k$ in 
\eqref{chikdef} as the correct extension of $\chi(\pi,1)$.
It is not at all trivial to deduce this formula 
from the dimension of $V_{k,n}$, and indeed, some substantial
guesswork was required. However, the validity of any particular
guess $\chi_{k,n}$ can be determined experimentally, by testing
if its kernel in the $\C(q)$-span of $\DD_{k,n}$ agrees with the kernel
of $\varphi$. Clearly the existence of a testable criterion 
such as this makes the problem of determining
$\chi_{k,n}$ experimentally much more reasonable.

Once the definition of $\chi_{k,n}$ was conjectured, finding formulas for $d_{\pm}$
that satisfy \eqref{eq:recs1}
turned out to be relatively straightforward.




\subsection{Characteristic functions of partial Dyck paths}

The following definition is motivated by Proposition \ref{prop.q1}. Let $\pi \in \DD_{k,n}$. Let $\sigma=(\sigma_1, \sigma_2,\ldots,\sigma_k)\in \Z_{>0}$ be a tuple of distinct numbers. The elements of $\Im(\sigma)\subset \Z_{>0}$ will be called \emph{special}. Let
\[
U_{\pi,\sigma}=\left\{w\in \mathbb{Z}^{n}_{>0} :
w_i=\sigma_i \mbox{ for } i \leq k,\ 
w_i \neq w_j \mbox{ for } (i,j) \in \Area(\pi)\right\}.
\]
The second condition on $w$ is the ``no attack'' condition as before. The first condition says that we put the special labels in the positions $1,2,\ldots,k$ as prescribed by $\sigma$. Let 
\begin{equation}\label{eq:chiprime}
\chi_\sigma'(\pi)=\sum_{w \in U_{\pi,\sigma}} q^{\inv(\pi,w)} z_w.
\end{equation}
Here we use variables $z_1,z_2,\ldots$.

Suppose $\sigma$ is a permutation, i.e $\sigma_i\leq k$ for all $i$. Set $z_i=y_i$ for $i\leq k$ and $z_i=x_{i-k}$ for $i>k$.
We denote
\[
\chi'_{k}(\pi)=\chi_{(1,2,\ldots,k)}'(\pi).
\] 

Let us group the summands in \eqref{eq:chiprime} according to the positions of
special labels. More precisely, let $S\subset\{1,\ldots,n\}$ such that $\{1,\ldots,k\}\subset S$ and $w^S:S\to \{1,\ldots,k\}$ such that $w^S_i=\sigma_i$ for $i=1,2,\ldots,k$ and $w_i\neq w_j$ for $i,j\in S$, $(i,j)\in\Area(\pi)$. Set 
\[
U_{\pi,\sigma}^{S,w^S}:=\left\{w\in U_{\pi,\sigma} : 
w_i=w^S_i\mbox{ for } i \in S,\; w_i>k\mbox{ for } i \notin S\right\},
\]
\[
\Sigma_{\pi,\sigma}^{S,w^S}:=\sum_{w \in U_{\pi,\sigma}^{S,w^S}} q^{\inv(\pi,w)} x_w,\qquad \chi_\sigma'(\pi) = \sum_{S, w^S} \Sigma_{\pi,\sigma}^{S,w^S}.
\]
Let $m_1<m_2<\cdots<m_r$ be all the positions not in $S$.
Let $\pi_S$ be the unique Dyck path of length $r$ such that $(i,j)\in\Area(\pi_S)$ if and only if $m_i, m_j\in \Area(\pi)$. We have
\[
\Sigma_{\pi,\sigma}^{S,w^S} = q^A \prod_{i\in S} y_{w_i} \sum_{w \in \Z_{>0}^{r} \text{no attack}} q^{\inv(\pi_S, w)} x_w,
\]
where 
\[
A=\#\{(i,j)\in\Area(\pi) : (i\in S, j\in S, w^S_i>w^S_j) \mbox{ or } (i\notin S, j\in S)\}.
\]

By Proposition \ref{prop.q1} we have
\begin{equation}\label{eq:chitochik}
\Sigma_{\pi,\sigma}^{S,w^S} = q^A (q-1)^{|S|-n} \chi(\pi^S)\left[(q-1)X\right] \prod_{i\in S} y_{w_i}.
\end{equation}
In particular, $\chi_\sigma(\pi)$ is a symmetric function in $x_1, x_2,\ldots$ and it makes sense to define
\begin{equation}
\label{chikdef}
\chi_\sigma(\pi)[X] := \frac{1}{y_1 y_2\cdots y_k} (q-1)^{|\pi|}\chi_\sigma'(\pi)\left[\frac{X}{q-1}\right]\in V_k,\quad \chi_k(\pi):=\chi_{\Id_k}(\pi).
\end{equation}

\begin{rem}
The identity (\ref{eq:chitochik}) also implies that the coefficients of $\chi_\sigma(\pi)[X]$ are polynomials in $q$ and gives a way to express $\chi_\sigma$ in terms of the characteristic functions $\chi(\pi_S)$ for all $S$.
\end{rem}

For $k=0$ we recover $\chi(\pi)$:
\[
\chi_0(\pi)=\chi(\pi)\qquad (\pi\in \mathbb{D}_0=\mathbb{D}).
\]
Thus, it suffices to prove that
\begin{equation}
\label{eq:recs1}
\chi_{k+1}(E\pi)=d_{+} \chi_{k}(\pi),\quad
\chi_{k-1}(N\pi)=d_{-} \chi_{k}(\pi)
\quad (\pi \in \DD_k).
\end{equation}

\subsection{Raising operator}
We begin with the first case. Let $\pi\in\DD_{k,n}$ so that $E\pi \in \DD_{k+1, n+1}$, and we need to express $\chi_{k+1}(E\pi)$ in terms of $\chi_k(\pi)$. Let $\sigma$ be the following sequence:
\[
\sigma=(k+1, 1, 2, \ldots, k).
\]
Then we have a bijection $f:U_{\pi,\Id_k} \to U_{E\pi,\sigma}$ obtained
by sending 
\[
w=(1, 2, \ldots, k, w_{k+1},\ldots,w_n)
\]
to
\[
f(w):=(k+1, 1, 2, \ldots,k, w_{k+1},\ldots,w_n).
\]
This is possible because $1$ does not attack $k+1$ in $E\pi$.
We clearly have $\inv({E\pi}, f(w)) = \inv(\pi, w) + k$, which implies
\begin{equation*}
\chi'_{\sigma}(E\pi) = z_{k+1} q^k \chi'_k(\pi),
\end{equation*}
where both sides are written in terms of the variables $z_i$. When we pass to the variables $x_i$, $y_i$ on the left we have
\[
(z_1,z_2,\ldots)=(y_1,y_2,\ldots,y_{k+1},x_1,x_2,\ldots),
\]
but on the right we have
\[
(z_1,z_2,\ldots)=(y_1,y_2,\ldots,y_{k},x_1,x_2,\ldots),
\]
thus we need to perform the substitution $X=y_{k+1}+X$:
\[
\chi'_{\sigma}(E\pi)[X] = y_{k+1} q^k \chi'_k(\pi)[X+y_{k+1}],
\]
Performing the transformation \eqref{chikdef} we obtain
\begin{equation}
\label{chi1dp}
\chi_{\sigma}(E\pi) = q^k \chi_k(\pi)\left[X+(q-1)y_{k+1}\right].
\end{equation}
To finish the computation we need to relate $\chi_{k+1}=\chi_{\Id_{k+1}}$ and $\chi_\sigma$. We first note that $\sigma$ can be obtained from $\Id_{k+1}$ by successively swapping neighboring labels. Let $\sigma^{(1)}=\Id_{k+1}$ and
\[
\sigma^{(i)} = (i, 1, 2,\ldots,i-1,i+1,\ldots,k+1)\qquad (i=2,3,\ldots,k+1),
\]
so that $\sigma = \sigma^{(k+1)}$. It is clear that $\sigma^{(i+1)}$ can be obtained from $\sigma^{(i)}$ by interchanging the labels $i$ and $i+1$.

We show below (Proposition \ref{prop:swapping}) that this kind of interchange is controlled by the operator $\Delta_{y_i, y_{i+1}}$:
\begin{equation}
\label{chisigrec}
\chi_{\sigma^{(i+1)}}(E\pi)=\Delta_{y_i,y_{i+1}}
\chi_{\sigma^{(i)}}(E\pi).
\end{equation}

This implies
\[
\chi_\sigma(E\pi) = \Delta_{y_{k-1},y_{k}}\cdots \Delta_{y_{1},y_{2}} \chi_{k+1}(E\pi).
\]
When we insert this equation
into \eqref{chi1dp}, we arrive at
\[
\chi_{k+1}(E\pi) = T_{1}\cdots T_{k}
\left(\chi_k(\pi)\left[X+(q-1) y_{k+1}\right]\right) = d_+ \chi_k(\pi).
\]

\subsection{Swapping operators}
\begin{prop}\label{prop:swapping}
For any $w\in \DD_k$, $\sigma$ as above and $m$ special suppose that
$m+1$ is not special or $\sigma^{-1}(m)<\sigma^{-1}(m+1)$. Then we have
\[
\chi'_{\tau_m \sigma}(w) = \Delta_{z_{m}, z_{m+1}} \chi'_{\sigma},
\]
where $\tau_m$ is the transposition $m\leftrightarrow m+1$, $(\tau_m\sigma)_i = \tau_m(\sigma_i)$ for $i=1,\ldots,k$.
\end{prop}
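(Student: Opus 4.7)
My plan is to prove Proposition~\ref{prop:swapping} as an identity of polynomials in the $z$-variables, by decomposing each side according to which positions of a labeling $\mathbf{w}\in U_{w,\sigma}$ carry the labels $m$ or $m+1$, and then verifying the resulting algebraic identity component-by-component in the attack graph on these positions. Write $u = z_m$ and $v = z_{m+1}$. For each $\mathbf{w}\in U_{w,\sigma}$, let $L(\mathbf{w}) = \{p : w_p\in\{m,m+1\}\}$ and group the sum defining $\chi'_\sigma(w)$ by $(L, \bar{\mathbf{w}})$, where $\bar{\mathbf{w}}$ is the restriction of $\mathbf{w}$ to $[n]\setminus L$. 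The contribution factors as $M(L,\bar{\mathbf{w}})\,Y^\sigma_L(u,v,q)$, where $M$ is independent of $u,v$ (because $m$ and $m+1$ are consecutive, so inversions at attack pairs with at most one endpoint in $L$ do not see the specific $\{m,m+1\}$-coloring) and
\[
Y^\sigma_L(u,v,q) = \sum_c q^{\ell(c)}\,u^{a(c)}v^{b(c)}
\]
runs over proper $2$-colorings $c$ of the attack subgraph on $L$ by $\{m,m+1\}$, subject to the forced conditions $w_i = m$ and, when $m+1$ is special, $w_j = m+1$ (with $i=\sigma^{-1}(m)$, $j=\sigma^{-1}(m+1)$).

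The structural heart of the argument is that each connected component of the attack subgraph on $L$ is a path on consecutive elements of $L$ (in sorted order). Indeed, for any $p<r<q$ all in $L$ with $(p,q)\in\Area(w)$, the non-decreasing path-height already dominates $q$ at column $p$, so $(p,r),(r,q)\in\Area(w)$ as well; any chord inside a component therefore forces a triangle, contradicting the bipartiteness required for a proper $2$-coloring. Hence $Y^\sigma_L = Y^\sigma_{C_i}\cdot\prod_{C\text{ free}}Y_C$, where $C_i$ is the component containing $i$ and, when $m+1$ is special, also $j$, since $i,j\le k$ both special implies $(i,j)\in\Area(w)$ for every $w\in\DD_k$. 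The hypothesis $\sigma^{-1}(m)<\sigma^{-1}(m+1)$ is used precisely to ensure that the alternating bipartition of $C_i$ places $i$ and $j$ on opposite sides, so that the forced coloring exists and $Y^\sigma_{C_i}\neq 0$.

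Two local identities then drive the proof. For a free path-component $C$, direct computation on its alternating structure shows that $Y_C$ is symmetric in $u,v$, and consequently $q^{E_C}\,\bar s\,Y_C = Y_C$, where $\bar s$ is the involution $(u,v,q)\mapsto(v,u,q^{-1})$ and $E_C$ is the number of edges of $C$. For the forced component, $Y^\sigma_{C_i} = q^\ell u^a v^b$ is a single monomial, and one must verify the polynomial identity $\Delta_{uv}(u^a v^b) = q^{e_+-e_-}u^b v^a$, where $e_+$ and $e_-$ count the edges of $C_i$ whose endpoint-labels occur in the orders $(m,m+1)$ and $(m+1,m)$ respectively; since the forced path always satisfies $|a-b|\le 1$, this reduces to two short computations using the defining formula for $\Delta_{uv}$.

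Assembly then uses the easily-checked identity $\Delta_{uv}(fg) = f\,\Delta_{uv}(g)$ for any $f$ symmetric in $u,v$, giving
\[
\Delta_{uv}\,Y^\sigma_L = \Bigl(\prod_{C\text{ free}}Y_C\Bigr)\Delta_{uv}\,Y^\sigma_{C_i} = q^E\,\bar s\,Y^\sigma_L = Y^{\tau_m\sigma}_L,
\]
where the second equality applies the two local identities and the third uses the label-swap bijection on colorings (swapping $m$ with $m+1$) to identify $q^E\,\bar s\,Y^\sigma_L$ with $Y^{\tau_m\sigma}_L$; summing the contributions over $(L,\bar{\mathbf{w}})$ then gives the desired identity. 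The main obstacle is the combination of the structural lemma and the forced-component polynomial identity $\Delta_{uv}(u^av^b)=q^{e_+-e_-}u^bv^a$: both depend sensitively on the combinatorics of the Dyck path attack relation and on the alternating structure of the bipartite path components, and the hypothesis on the ordering of $\sigma^{-1}(m)$ and $\sigma^{-1}(m+1)$ enters precisely through the existence of a compatible forced coloring of $C_i$.
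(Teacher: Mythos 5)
Your decomposition (by the set $L$ of positions labeled $m$ or $m+1$, then by path-components of the attack graph on $L$) is exactly the paper's decomposition into equivalence classes and then into ``runs,'' and your two local identities --- symmetry of the free-component factor and the forced-component $\Delta_{uv}$ identity --- are the same two facts the paper checks via its explicit formulas $a(l,c)$. So the strategy matches; the one substantive problem is where you say the hypothesis on $\sigma$ is used. You write that $\sigma^{-1}(m)<\sigma^{-1}(m+1)$ ``is used precisely to ensure that the alternating bipartition of $C_i$ places $i$ and $j$ on opposite sides, so that the forced coloring exists.'' But $i,j\le k$ always attack each other in a partial Dyck path (every cell with both coordinates $\le k$ lies under the path), so $i$ and $j$ are always consecutive in $C_i$ and hence always on opposite sides of the bipartition; the forced coloring exists, and $Y^\sigma_{C_i}\ne 0$, with or without the hypothesis.

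The hypothesis is actually needed for your forced-component identity to hold. Since every position $p$ with $p<\min(i,j)$ is special with a label outside $\{m,m+1\}$, the smallest element of $C_i$ is $\min(i,j)$, and the alternating forced coloring starts there. The hypothesis $\sigma^{-1}(m)<\sigma^{-1}(m+1)$ (or $m+1$ not special) guarantees this smallest element is $i$, labeled $m$, so the exponents satisfy $a\ge b$. That matters because $\Delta_{uv}(u^av^b)=q^{e_+-e_-}u^bv^a$ is true only when $a\ge b$: one has $\Delta_{uv}(1)=q$ and $\Delta_{uv}(u)=v$, but
\[
\Delta_{uv}(v)=qu+(q-1)v,
\]
so for $a=b-1$ the output is not a monomial and the identity fails. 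Thus it is not ``existence of the forced coloring'' but ``the forced coloring starts with $m$, giving $a\ge b$'' that the hypothesis buys you. With that correction your ``two short computations'' (for $a=b$ and $a=b+1$) close the argument, and the rest of the assembly via $q^E\bar s$ is fine.
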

\begin{proof}
We decompose both sides as follows. For any  $w \in U_{\pi, \sigma}$ let $S(w)$ be the set of indices $j$ where $w_j\in\{m,m+1\}$. For $w,w'\in U_{\pi, \sigma}$ write $w\sim w'$ if $S(w)=S(w')$ and $w_i=w_i'$ for all $i\notin S(w)$. This defines an equivalence relation on $U_{\pi, \sigma}$.
The sum \eqref{eq:chiprime} is then decomposed as follows:
\begin{equation}\label{eq:rundecomp}
\chi'_\sigma(\pi) = \sum_{[w]\in U_{\pi, \sigma}/\sim} q^{\inv_1(\pi, w)} \prod_{i\notin S} z_{w_i} \sum_{w'\sim w} a(w'),
\end{equation}
where 
\[
\inv_1(\pi, w) = \#\{(i,j)\in\Area(\pi): w_i>w_j, i\notin S(w) \mbox{ or } j\notin S(w)\},
\]
which does not depend on the choice of a representative $w$ in the equivalence class $[w]$, and 
\[
a(w) = q^{\inv_2(\pi, w)} \prod_{i\in S} z_{w_i},
\]
\[
\inv_2(\pi, w) = \#\{(i,j)\in\Area(\pi): w_i>w_j, i,j \in S(w)\}.
\]

Let $f:U_{\pi, \sigma}\to U_{\pi, \tau_m\sigma}$ be the bijection defined by $f(w)_i = \tau_m(w_i)$. This bijection respects the equivalence relation $\sim$ and we have $S(f(w))=S(w)$. Moreover, we have $\inv_1(\pi, w) = \inv_1(\pi, f(w))$. 
We now make the stronger claim that for any $w\in U_{\pi, \sigma}$
\begin{equation}
\label{chisigrecref}
\sum_{w'\sim f(w)} a(w') = \Delta_{z_m,z_{m+1}}
\sum_{w'\sim w} a(w')
\end{equation}
which would imply the statement by summing over all equivalence classes. 

For each $w\in U_{\pi, \sigma}$ the set $S(w)$ is decomposed into a disjoint union of \emph{runs}, i.e. subsets
\[R=\{j_1,...,j_l\} \subset \{1,...,n\},\quad j_1<\cdots <j_l\]
such that in each run $j_a$ attacks $j_{a+1}$ for all $a$ and
elements of different runs do not attack each other. 
Because of the non attacking condition, the labels $w_{j_a}$ must alternate between $m,m+1$ and $j_a$ does not attack $j_{a+2}$. Thus to fix $w$ in each equivalence class it is enough to fix $w_{j_1}$ for each run. Suppose the runs of $S(w)$ have lengths $l_1, l_2,\ldots, l_r$ and the first values of $w$ in each run are $c_1, c_2,\ldots, c_r$ respectively.

With this information $a(w)$ can be computed as follows:
\[
a(w) = \prod_{i=1}^r a(l_i, c_i),
\]
where
\[
a(l,c):=
\begin{cases}
q^{l'-1} z_{m}^{l'} z_{m+1}^{l'} & l=2l', c=m\\
q^{l'} z_{m}^{l'+1} z_{m+1}^{l'} & l=2l'+1, c=m \\
q^{l'} z_{m}^{l'} z_{m+1}^{l'} & l=2l', c=m+1 \\
q^{l'} z_{m}^{l'} z_{m+1}^{l'+1} & l=2l'+1, c=m+1.
\end{cases}
\]

For instance, let $k=3$ and $\pi$ be the Dyck path in Figure \ref{dyckproof},
and let 
\[w=(1,3,2,7,1,7,1,2) \in U_{\pi,(132)}.\]
Let $m=1$. Then we have $S(w)=\{1,3,5,7,8\}$, which decomposes into
two runs
$\{1,3,5\}$ and $\{7,8\}$. So we have $r=2$, $(l_1, l_2)=(3, 2)$, $(c_1, c_2) = (1, 1)$ and we obtain
\[
a(w)=a(3, 1) a(2, 1) = q z_1^2 z_2 z_1 z_2 = q z_1^3 z_2^2.
\]
\begin{figure}
\begin{tikzpicture}
\draw[help lines] (0,0) grid (8,8);
\draw[dashed, color=gray] (0,0)--(8,8);
\draw[->,very thick] (0,3)--(0,4);
\draw[->,very thick] (0,4)--(1,4);
\draw[->,very thick] (1,4)--(2,4);
\draw[->,very thick] (2,4)--(2,5);
\draw[->,very thick] (2,5)--(3,5);
\draw[->,very thick] (3,5)--(4,5);
\draw[->,very thick] (4,5)--(4,6);
\draw[->,very thick] (4,6)--(5,6);
\draw[->,very thick] (5,6)--(5,7);
\draw[->,very thick] (5,7)--(5,8);
\draw[->,very thick] (5,8)--(6,8);
\draw[->,very thick] (6,8)--(7,8);
\draw[->,very thick] (7,8)--(8,8);
\node at (.7,.3) {1};
\node at (1.7,1.3) {3};
\node at (2.7,2.3) {2};
\node at (3.7,3.3) {7};
\node at (4.7,4.3) {1};
\node at (5.7,5.3) {7};
\node at (6.7,6.3) {1};
\node at (7.7,7.3) {2};
\end{tikzpicture}
\caption{}
\label{dyckproof}
\end{figure}

 Note that by the assumption on $\sigma$ we have $c_1=m$, while $c_i$ can take arbitrary values $\{m, m+1\}$ for $i>1$. This implies
\[
\sum_{w'\sim w} a(w') = a(l_1, m) \prod_{i=2}^r (a(l_i, m) + a(l_i, m+1)).
\]
On the other hand we have
\[
\sum_{w'\sim f(w)} a(w') = \sum_{w'\sim w} a(f(w')) =  a(l_1, m+1) \prod_{i=2}^r (a(l_i, m) + a(l_i, m+1)).
\]

Now notice that for all $l$ the sum $a(l, m) + a(l, m+1)$ is symmetric in $z_m, z_{m+1}$. The operator $\Delta_{z_m,z_{m+1}}$ commutes with multiplication by symmetric functions and satisfies 
\[\Delta_{z_m,z_{m+1}}(a(l, m)) = a(l, m+1).\]
This establishes \eqref{chisigrecref} and the proof is complete.
\end{proof}
\begin{rem}
The arguments used in the proof can be used to show that in the case when $m, m+1$ are both not special the function $\chi'_\sigma(\pi)$ is symmetric in $z_m, z_{m+1}$. In particular, we can obtain a direct proof of the fact that $\chi'_\sigma$ is symmetric in the variables $z_m, z_{m+1}, z_{m+2},\ldots$ for $i=\max(\sigma)+1$, without use of Proposition \ref{prop.q1}.
\end{rem}

\subsection{Lowering operator}

We now turn to the remaining identity $\chi_{k-1}(N\pi)=d_-\chi_k(\pi)$. Assume $\pi\in\DD_{k,n}$, so that $N\pi\in\DD_{k-1,n}$. We observe that
\[\chi'_{k-1}(N\pi)[X + y_{k}] =
\sum_{r\geq 0} \chi'_{k,r}(\pi)[X],\]
where
\[\chi'_{k,r}(\pi)=\chi'_{\sigma}(\pi),\quad \sigma=(1,2,...,k-1,k+r)\]
and to get to the second equality we have summed over all possible values 
of $r=w_k-k$ that do not result in an attack. It is convenient to set $x_0=y_k$.
Using Proposition \ref{prop:swapping} we can characterize $\chi'_{k,r}(\pi)$ by
\begin{equation}
\label{chi1r}
\chi'_{k,0}(\pi) = \chi'_{k}(\pi),\quad
\chi'_{k,r+1}(\pi) = \Delta_{x_r,x_{r+1}} \chi'_{k,r}(\pi)\quad(r\geq 0).
\end{equation}

Now notice that there is a unique expansion
\[\chi'_{k}(\pi)[X]=\sum_{j \geq 1} y_k^j g_j(\pi)[X+y_k],\quad g_j(\pi)\in V_{k-1}.\]
The advantage over the more obvious expansion in powers of $y_k$ is that each
coefficient $g_j[X+y_k]$ is symmetric in the variables $y_k,x_1,...$
As a result, we have that
\[\chi'_{k,r}(\pi)[X]=\Delta_{x_{r-1},x_{r}}\cdots \Delta_{x_2,x_1}\Delta_{y_k,x_1} \sum_{i\geq 1} y_k^i g_i(\pi)[X+y_k]=
\sum_{i\geq 1} f_{i,r} g_i(\pi)[X+y_k]\]
where
\[f_{i,r}=\Delta_{x_{r-1},x_{r}}\cdots 
\Delta_{x_1,x_2}\Delta_{y_k,x_1}(y_k^i)\quad(i\geq 1, r\geq 0)
\]
The extra symmetry in the $y_k$ variable is used to
pass $\Delta_{y_k,x_1}$ by multiplication by $g_i(\pi)[X+y_k]$.

Now we need an explicit formula for $f_{i,r}$:
\begin{prop}
Denote $X_r=y_k+x_1+\cdots+x_r$, $X_{-1}=0$. We have
\[
f_{i,r}=\frac{h_i[(1-q)X_r] - h_i[(1-q)X_{r-1}]}{1-q}.
\]
\end{prop}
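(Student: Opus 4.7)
The plan is to proceed by induction on $r \geq 0$. For the base case $r=0$ the composition of operators is empty, so $f_{i,0} = y_k^i$, while the right-hand side evaluates to
\[\frac{h_i[(1-q)X_0] - h_i[(1-q)X_{-1}]}{1-q} = \frac{h_i[(1-q)y_k]}{1-q} = y_k^i,\]
the last equality being the single-variable plethystic identity $h_i[(1-q)y] = (1-q)y^i$ for $i \geq 1$, obtained by expanding $\frac{1-qyz}{1-yz}$.

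For the inductive step, set $u = x_{r-1}$, $v = x_r$, and $Y = X_{r-2}$, so that $X_{r-1} = Y + u$ and $X_r = Y + u + v$. A direct check from the definition of $\Delta_{uv}$ shows that it acts as multiplication by $q$ on any polynomial symmetric in $u,v$; in particular $\Delta_{uv}(h_i[(1-q)Y]) = q\,h_i[(1-q)Y]$ since $Y$ does not involve $u$ or $v$. Applying $\Delta_{uv}$ to the induction hypothesis $f_{i,r-1} = (h_i[(1-q)(Y+u)] - h_i[(1-q)Y])/(1-q)$ then reduces the inductive step to the identity
\[\Delta_{uv}(h_i[(1-q)(Y+u)]) = h_i[(1-q)(Y+u+v)] - h_i[(1-q)(Y+u)] + q\,h_i[(1-q)Y].\]

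I would verify this by passing to generating functions. Introduce $H_A(z) := \sum_{i\geq 0} h_i[(1-q)A]z^i = \prod_{a\in A}\frac{1-qaz}{1-az}$ and the single-letter factor $\phi(u,z) := \frac{1-quz}{1-uz}$, so that $H_{Y+u}(z) = H_Y(z)\phi(u,z)$. Starting from the definition one easily rewrites $\Delta_{uv} = q + (v-qu)\partial_{uv}$ where $\partial_{uv}P := (P - s_{uv}P)/(u-v)$ is the divided difference and $s_{uv}$ swaps $u$ and $v$. Because $H_Y(z)$ is $(u,v)$-symmetric the divided difference passes through it, giving $\partial_{uv}(H_{Y+u}(z)) = H_Y(z)\,\partial_{uv}(\phi(u,z))$. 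The elementary simplification $(1-quz)(1-vz) - (1-qvz)(1-uz) = (1-q)z(u-v)$ yields
\[\partial_{uv}(\phi(u,z)) = \frac{(1-q)z}{(1-uz)(1-vz)}.\]
Multiplying the $i$-th identity above by $z^i$ and summing, the statement becomes the rational identity
\[\Delta_{uv}H_{Y+u}(z) = H_{Y+u+v}(z) - H_{Y+u}(z) + qH_Y(z),\]
which after substituting the formulas for $\Delta_{uv}$ and $\partial_{uv}(\phi)$ and clearing the common denominator $H_Y(z)(1-uz)(1-vz)$ reduces to a short polynomial identity in $u,v,z$ verified by direct expansion.

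The main conceptual point is that the alphabet dependence of $H_A(z)$ factorizes letterwise, so the seemingly elaborate action of $\Delta_{uv}$ on $h_i[(1-q)(Y+u)]$ is ultimately controlled by a one-variable computation on $\phi(u,z)$. Once this observation is in hand there is no serious obstacle; the remaining work is a routine manipulation of rational functions.
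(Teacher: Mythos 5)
Your proof is correct. It uses the same induction on $r$ with the same base case, but the inductive step is packaged differently from the paper's. The paper expands $f_{i,r}'=\sum_{j=1}^i x_r^j h_{i-j}[(1-q)X_{r-1}]$, notes that $\Delta_{x_r,x_{r+1}}$ touches only the factor $x_r^j$ (since $X_{r-1}$ is free of $x_r,x_{r+1}$), applies the explicit formula $\Delta_{x_r,x_{r+1}}x_r^j = x_{r+1}h_{j-1}[(1-q)x_r+x_{r+1}]$, and then reassembles the resulting convolution $\sum_j h_{j-1}h_{i-j}$ into $h_{i-1}$. You instead pass to the generating function $H_A(z)=\sum_i h_i[(1-q)A]z^i=\prod_{a\in A}\frac{1-qaz}{1-az}$, exploit the letterwise factorization $H_{Y+u}(z)=H_Y(z)\phi(u,z)$ so that $\Delta_{uv}$ acts only through the single factor $\phi(u,z)$, and reduce the whole family of $h_i$-identities to the one divided-difference computation $\partial_{uv}\phi(u,z)=\frac{(1-q)z}{(1-uz)(1-vz)}$ followed by a rational-function check. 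The two arguments rely on the same underlying observations (induction; $\Delta_{uv}$ multiplies $(u,v)$-symmetric inputs by $q$; isolate the one variable $\Delta$ acts on), but your key lemma is the divided difference of $\phi$ rather than the explicit action of $\Delta$ on a monomial, and you avoid the convolution-recombination step entirely. Both are correct and of comparable length; the generating-function route is arguably a bit more systematic, trading the paper's $h$-series manipulation for a one-shot rational identity.
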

\begin{proof}
Denote the right hand side by $f_{i,r}'$. The proof goes by induction on $r$. For $r=0$ both sides are equal to $y_k^i$. Thus it is enough to show
that
\begin{equation}
\label{fjrrec}
\Delta_{x_{r},x_{r+1}}(f'_{i,r})=f'_{i,r+1}.
\end{equation}
Use $X_r=X_{r-1}+x_r$ to write $f_{i,r}'$ as follows:
\begin{equation}
\label{fjreq}
f_{i,r}' = \sum_{j=1}^i x_r^j h_{i-j}[(1-q) X_{r-1}] = x_r h_{i-1}[(1-q) X_{r-1} + x_r].
\end{equation}
Now $X_{r-1}$ does not contain the variables $x_r$, $x_{r+1}$, so we have
\[
\Delta_{x_{r},x_{r+1}}(f'_{i,r}) = \sum_{j=1}^i h_{i-j}[(1-q) X_{r-1}]  \Delta_{x_{r},x_{r+1}} x_r^j.
\]
Using the formula
\[\Delta_{x_r, x_{r+1}} x_r^j = 
x_{r+1} h_{j-1}[(1-q)x_r + x_{r+1}],\]
which is straightforward to check, we can evaluate
\[
\Delta_{x_r x_{r+1}} f'_{i,r} = 
x_{r+1}\sum_{j=1}^i h_{j-1}[(1-q)x_r + x_{r+1}] h_{i-j}[(1-q)X_{r-1}]
\]
\[
=
x_{r+1} h_{i-1}[(1-q) X_{r} + x_{r+1}],
\]
which matches $f_{i,r+1}'$ by \eqref{fjreq}.
\end{proof}

Now, if we sum over all $r$, we obtain
\begin{equation}
\label{sumfir}
\sum_{r \geq 0} f_{i,r}=
(1-q)^{-1}h_i\left[(1-q)(X+y_k)\right].
\end{equation}
Thus
\[\chi_{k-1}'(N \pi)[X + y_{k}] = 
(1-q)^{-1} \sum_{i\geq 1} h_i[(1-q)(X+y_k)] g_i(\pi)[X+y_k].\]
This implies
\begin{equation}
\label{chistep}
\chi_{k-1} (N\pi)[X] = - \frac{(q-1)^{n-k}}{y_1\cdots y_{k-1}} \sum_{i\geq 1} h_i[-X] g_i(\pi)\left[\frac{X}{q-1}\right].
\end{equation}
On the other hand $g_i(\pi)$ were defined in such a way that
\[
\chi_k(\pi)[(q-1)X] = \frac{(q-1)^{n-k}}{y_1\cdots y_{k}} \sum_{i\geq 1} y_k^i g_i(\pi)[X+y_{k}].
\]
Substituting $\frac{1}{q-1}X-y_k$ for $X$ gives
\begin{equation}
\label{chistep2}
\chi_k(\pi)[X-(q-1)y_k] = \frac{(q-1)^{n-k}}{y_1\cdots y_{k}} \sum_{i\geq 1}  y_k^i g_i(\pi)\left[\frac{X}{q-1}\right].
\end{equation}
Comparing \eqref{chistep} and \eqref{chistep2} we obtain
\[
\chi_{k-1} (N\pi)[X] = \sum_{i\geq 0} -h_{i+1}[-X]\left(\chi_k(\pi)[X-(q-1)y_k]|_{y_k^i}\right).
\]
This can be seen to coincide with $d_- \chi_k(\pi)$, establishing the second case of \eqref{eq:recs1}. Thus the proof of Theorem \ref{dpthm} is complete.

\subsection{Main recursion}
We now show how to express all of $D_{\alpha}(q,t)$ using our operators:
\begin{thm}
\label{thm:recN}
If $\alpha$ is a composition of length $l$, we have
\[D_{\alpha}(q,t)=d_-^l(N_\alpha).\]
where $N_\alpha\in V_l$ is defined by the recursion relations
\begin{equation}
\label{receqs}
N_{\emptyset}=1,\quad N_{[1,\alpha]}=d_+N_{\alpha},\quad 
N_{a \alpha} = \frac{t^{a-1}}{q-1} [d_{-},d_{+}] \sum_{\beta\models
  a-1} d_{-}^{l(\beta)-1} N_{\alpha \beta}.
\end{equation}
\end{thm}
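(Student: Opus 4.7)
The plan is to exhibit an explicit combinatorial formula for $N_\alpha$ in terms of Dyck paths, so that the identity $d_-^{l(\alpha)} N_\alpha = D_\alpha(q,t)$ becomes tautological via Corollary \ref{chi0cor}, and then verify the three parts of the recursion directly from this description. For a composition $\alpha$ of length $l$, I define the candidate
\[
\tilde N_\alpha := \sum_{\touch'(\pi)=\alpha} t^{\bounce(\pi)}\, \Theta(\pi) \;\in\; V_l,
\]
where, for each Dyck path $\pi$ with $\touch'(\pi)=\alpha$ (hence first bounce block of length $l$ and $\pi=N^l E\tilde\pi$), $\Theta(\pi)\in V_l$ is obtained by applying to $1\in V_0$ the operator string of Corollary \ref{chi0cor} associated with $\pi$ after the initial $N^l$ is stripped off: $d_+$ for each East step, $d_-$ for each non-corner North step, and $\frac{1}{q-1}[d_-,d_+]$ for each corner, read right-to-left. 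By Corollary \ref{chi0cor} and formula \eqref{eq:Dalpha}, applying $d_-^l$ reattaches the $l$ North steps of the first bounce block and gives $d_-^l \tilde N_\alpha = D_\alpha(q,t)$ immediately. It then suffices to show $\tilde N_\alpha = N_\alpha$, which we do by induction on $|\alpha|$ by checking that $\tilde N_\alpha$ satisfies the same three recursion relations.

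The base case $\tilde N_\emptyset = 1$ is the empty path. For $\tilde N_{[1,\alpha]}=d_+\tilde N_\alpha$: a path with $\touch'=(1,\alpha)$ has first bounce block of length $l(\alpha)+1$ and, by the inverse of the $(\area,\dinv)\leftrightarrow(\bounce,\area')$ bijection, corresponds to an original path beginning with a single $NE$ diagonal touch. Examining the construction of $\pi'$ from $\pi$ (Section 2), one sees that the ``1'' at the front of $\touch'$ precisely amounts to a single additional East step at the beginning of the portion of $\pi$ above its first bounce block, with no additional corner and with the bounce block merely extended by one North step. This translates directly to a single $d_+$ acting on $\tilde N_\alpha$.

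The main case is the recursion for $a\geq 2$. The underlying combinatorial statement is a decomposition: for each $\pi$ with $\touch'(\pi)=(a,\alpha_2,\ldots,\alpha_l)$, there is a distinguished corner at the top-left of the first bounce region, and peeling it off with a factor $\frac{1}{q-1}[d_-,d_+]$ leaves a path fragment describable, after $l(\beta)-1$ top North steps are removed (giving $d_-^{l(\beta)-1}$), as a contribution to $\tilde N_{\alpha\beta}$ for some unique $\beta\models a-1$. Summing over $\beta$ covers all possibilities. The factor $t^{a-1}$ records the difference in the bounce statistic: the original first bounce block of height $a$ is replaced, under the surgery, by a first block of height $1$, and the remaining $a-1$ diagonal cells are redistributed among the newly exposed bounce blocks indexed by $\beta$.

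The main obstacle is this third recursion. Concretely, one has to (i) identify the canonical corner unambiguously, (ii) verify that the map $\pi \mapsto (\beta, \pi_{\mathrm{res}})$ is a bijection with the correct inverse from paths contributing to $\tilde N_{\alpha\beta}$ back to paths contributing to $\tilde N_{a\alpha}$, and (iii) match all combinatorial weights: the $t^{a-1}$ factor against the change in $\bounce$, the $\frac{1}{q-1}[d_-,d_+]$ against the corner contribution from Corollary \ref{chi0cor}, and the $d_-^{l(\beta)-1}$ against the precise portion of the first-bounce-region structure that gets absorbed. All of this is intrinsic to the bounce-path construction of Section 2 and requires careful case analysis, but once set up, it closes the induction and establishes $\tilde N_\alpha=N_\alpha$, completing the proof.
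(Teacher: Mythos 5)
Your candidate $\tilde N_\alpha$ is exactly the paper's intermediate object (there denoted $N'_\alpha$, with $\Theta(\pi)=\chi^0_l(\tilde\pi)$), and your outline matches the paper's strategy: $d_-^l\tilde N_\alpha=D_\alpha(q,t)$ follows at once from Corollary~\ref{chi0cor} and \eqref{eq:Dalpha}, and the remaining task is to show $\tilde N_\alpha$ obeys the recursion \eqref{receqs}. But the verification of the third relation, which you yourself flag as ``the main obstacle,'' is left entirely undone. What is actually needed is a precise combinatorial lemma: setting $r=l(\alpha)$ and $\gamma_{\delta,r}(\pi)=N^{r+1}EN^{l(\delta)-r}\tilde\pi$ for $\pi=N^{l(\delta)}\tilde\pi\in\DD_\delta$, one must prove the disjoint union $\DD_{a\alpha}=\bigsqcup_{\beta\models a-1}\gamma_{\alpha\beta,r}(\DD_{\alpha\beta})$ together with $\bounce(\gamma_{\alpha\beta,r}(\pi))=\bounce(\pi)+|\beta|$ and $\touch'(\gamma_{\alpha\beta,r}(\pi))=(1+|\beta|,\alpha_1,\dots,\alpha_r)$. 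These statements are not free: in the paper they are extracted from the $\hat\pi$ construction in the proof that $\touch'=\touch$ and from Remark~\ref{rem:touch}, and your sketch neither reproduces nor cites that analysis, merely deferring it to ``careful case analysis.''

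Beyond being incomplete, your heuristic for the $t^{a-1}$ factor is incorrect as stated. The first bounce block of a path $\pi'\in\DD_{a\alpha}$ has length $l(\alpha)+1$ (the number of parts of $a\alpha$), not $a$, and the shorter path $N^{l(\alpha\beta)}\tilde\pi$ feeding into $\tilde N_{\alpha\beta}$ has first bounce block of length $l(\alpha\beta)$, not $1$, so ``first bounce block of height $a$ replaced by a block of height $1$'' describes neither side of the surgery. The correct statement is that $N^{l(\alpha\beta)}\tilde\pi\mapsto N^{r+1}EN^{l(\beta)}\tilde\pi$ raises $\bounce$ by exactly $|\beta|=a-1$, and proving that requires the identity $\area(\hat\pi)=n-x+\area(\pi)$ from Section~2, not an inspection of bounce blocks of the shorter path. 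Similarly, for the $[1,\alpha]$ case, appealing to the inverse of the area/dinv bijection is a detour: it suffices (and is what the paper does) to note that $\pi'\in\DD_{1\alpha}$ precisely when $\pi'=N^{r+1}E\tilde\pi$ with $\tilde\pi$ starting with an East step, so no new corner is created and $\bounce$ is unchanged. The overall shape of your argument is right, but the combinatorial core is asserted rather than established, and part of what is asserted is wrong.
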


\begin{proof}
For any $k> 0$ let $\DD_k^0\subset \DD_k$ denote the subset of partial Dyck paths that begin with an East step. For $k=0$ let $\DD_0^0=\{\emptyset\}$. 
Define functions $\chi^0:\DD_k^0\rightarrow V_k$ by
\[\chi^0(\emptyset)=1,\quad \chi^0(EN^i\pi)=\frac{1}{q-1}[d_{-},
d_{+}] d_-^{i-1}\chi^0(\pi),\]
\[\chi^0(E\pi)=d_+\chi^0(\pi).\]
Given a composition $\alpha$ of length $l$, let
\[
\DD_{\alpha}=\left\{\pi \in \DD : \touch'(\pi)=\alpha\right\}.
\]
By the definition of $\touch'$ every element of $\DD_{\alpha}$ is of the form $\pi=N^l \tilde{\pi}$ for a unique element $\tilde{\pi}\in \DD_l^0$ so that by Corollary \ref{chi0cor} we have
\[
\chi(\pi, 0) = d_-^l \chi^0_l(\tilde{\pi}).
\]
Let
\[
N'_\alpha = \sum_{\pi \in \DD_\alpha} 
t^{\bounce(\pi)}\chi_l^0(\tilde{\pi}) \in V_l,
\]
so that $D_{\alpha}(q,t)=d_-^l(N'_{\alpha})$.
It suffices to show that $N'_\alpha$ satisfies the relations
\eqref{receqs}, and so agrees with $N_\alpha$.

For a composition $\alpha$ of length $l$ and $0\leq r\leq l$ we have a map
$\gamma_{\alpha, r}: \DD_\alpha \to \DD$
as follows: $\gamma_{\alpha, r}(\pi) = N^{r+1} E N^{l-r} \tilde{\pi}$. Clearly $|\gamma_{\alpha, r}(\pi)| = |\pi|+1$. From the definition of $\touch'$ we see the following relation:
\[
\bounce(\gamma_{\alpha, r}(\pi)) = \bounce(\pi) + \sum_{i>r} \alpha_i.
\]
Next we compute $\touch'(\gamma_{\alpha, r}(\pi))$. For $0\leq i\leq r$ we have
\[
\bounce(N^{i+2} E N^{r-i} E N^{l-r} \tilde{\pi}) = \bounce(N^{i+1} E N^{l-i} \tilde{\pi}).
\]
This implies
\[
\touch'(\gamma_{\alpha, r}(\pi)) = \left(1+\sum_{i>r} \alpha_i,\, \alpha_1,\, \alpha_2,\ldots,\, \alpha_r\right),
\]
so in particular $\touch'(\gamma_{\alpha, r}(\pi))$ depends only on $\alpha$ and $r$.

Since every non-empty Dyck path can be obtained as $\gamma_{\alpha, r}(\pi)$ in a unique way, we obtain for every composition $\alpha$ of length $r$:
\[
\DD_{a\alpha} = \bigsqcup_{\beta\models a-1} \gamma_{\alpha\beta, r}(\DD_{\alpha\beta}).
\]
It is not hard to see that this identity precisely translates to the relations \eqref{receqs} for $N_\alpha'$.
\end{proof}

\begin{example}
Using Theorem \ref{thm:recN}, we find that 
\[N_{31}=\frac{t^3}{(q-1)^2}
\left(d_{-++-++}-d_{-+++-+}-d_{+-+-++}+d_{+-++-+}\right)+\]
\[\frac{t^2}{q-1}\left(d_{-+-+++}-d_{+--+++}\right)=
qt^3y_1^2-qt^2y_1 e_1\in V_2,\]
where $d_{\epsilon_1\cdots \epsilon_n}=d_{\epsilon_1}\cdots d_{\epsilon_n} (1).$
We may then check that 
\[d_-^2 N_{31}=qt^3B_{3}B_1(1)+qt^2B_{2}B_1B_1(1)=\nabla C_{3}C_1(1).\]
\end{example}

\begin{example}

Let $\alpha=(a_1,...,a_k)$ be a composition of norm $n$ and length $k$ and
consider the polynomials in $q,t$ given by
\[D^{(-)}_{\alpha}(q,t)=\langle D_{\alpha}(q,t), e_n \rangle=(-1)^n D_{\alpha}(q,t)[-1],\]
which encode the Catalan case of the compositional shuffle conjecture.
This case of the conjecture was proved in
\cite{GXZ2012}, using recursions (Proposition 3.12) that are similar 
those defining $N_{\alpha}$ \eqref{receqs}, and it
is natural to ask if they follow as a special case.
It is not true that \eqref{receqs} descends to a recursion 
for the coefficients in from of $s_{1^n}$. However, we have the following proposition:

\begin{prop}
\label{catrecprop}
Let $\pi$ be Dyck path of size $N$ ending with $k$ East steps,
and let $(\epsilon_1,...,\epsilon_{2N-k}, +^k)$ denote the corresponding
sequence of plus and minus symbols, as in Theorem \ref{dpthm}.
Then for $f\in V_k$, we have
\begin{equation}
\label{catreceq}
\left(d_{\eps_1}\cdots d_{\eps_{2N-k}} (f)\right)[-1]=
(-1)^N q^{\area(\pi)} f[-q^k]\big|_{y_i=q^{i-1}}.
\end{equation}
\end{prop}

\begin{proof}
We prove this by induction on $2N-k$, the number of $d_\pm$ symbols.

First, suppose the final step $\epsilon_{2N-k}$ is a plus. 
Let $\rho_k$ denote the homomorphism $f\mapsto f[-q^k]|_{y_i=q^{i-1}}$
on the right hand side of \eqref{catreceq}.
Using the definition \eqref{dpdef} of $d_+$, and the observation that $\rho_k T_i=\rho_k$,
we have that
\[\rho_{k+1}(d_+f)=
\rho_{k+1}(f[X+(q-1)y_{k+1}])=
\rho_k(f).\]
The case now follows from the above equation, the induction step,
and the fact that the path $\pi$ remains the same.

The second case follows easily from the formula
\[\rho_{k}(d_-(fy_{k+1}^a))=-q^{d+1-(k-1)a} \rho_k(f),\]
for $f\in V_k$ homogeneous of degree $d$ in the symmetric function part. 
The base case is obvious.

\end{proof}

The recursions of \cite{GXZ2012} now follow fairly easily
from \eqref{receqs} as a special case,
if Theorem \ref{thm:recN} is taken as the definition of $D_{\alpha}(q,t)$.
The relations for $N_\alpha$ were not discovered this way, and 
the authors only noticed this connection after following up on 
a useful suggestion from one of the referees,
for which the authors are grateful.
\end{example}





\section{Operator relations}
We have operators 
\begin{equation}
\label{operators}
e_k,d_{\pm},T_i \acts V_*=V_0\oplus V_1\oplus \cdots
\end{equation}
where $e_k$ is the projection onto $V_k$, the others are
defined as above.
It is natural to ask for a complete set of relations between them. 
They are formalized in the following algebra:
\begin{defn}\label{defn:dpa}
The Dyck path algebra $\AA=\AA_q$ (over $R$) is the path algebra of the quiver with vertex set $\Z_{\geq 0}$, arrows $d_+$ from $i$ to $i+1$, arrows $d_-$ from $i+1$ to $i$ for $i\in\Z_{\geq 0}$, and loops $T_1, T_2, \ldots, T_{k-1}$ from $k$ to $k$ subject to the following relations:
$$
(T_i-1)(T_i+q)=0,\quad T_i T_{i+1} T_i = T_{i+1} T_i T_{i+1},
\quad T_i T_j = T_j T_i\quad(|i-j|>1),
$$
$$
T_i d_- = d_- T_i,\quad d_+ T_i = T_{i+1} d_+, \quad T_1 d_+^2 =
d_+^2,\quad d_-^2 T_{k-1} = d_-^2, 
$$
$$
d_-(d_+ d_- - d_- d_+) T_{k-1} = q (d_+ d_- - d_- d_+) d_- \quad(k\geq2),
$$
$$
T_1 (d_+ d_- - d_- d_+) d_+ = q d_+(d_+ d_- - d_- d_+),
$$ 
where in each identity $k$ denotes the index of the vertex where the respective paths begin. 
We have used the same letters $T_i,d_{\pm}$ to label the $i$th loop at every 
node $k$ to match with the previous notation. To distinguish between 
different nodes, we will use $T_i e_k$ where $e_k$ is the idempotent 
associated with node $k$. 
\end{defn}
We will prove
\begin{thm}\label{lem:mainlemma}
The operators \eqref{operators} define
a representation of $\AA$ on $V_*$. Furthermore, we have an isomorphism
of representations
\[\varphi:\AA e_0 
\xrightarrow{\sim} 
V_*\]
which sends $e_0$ to $1\in V_0$, and maps $e_k \AA e_0$ isomorphically
onto $V_k$.
\end{thm}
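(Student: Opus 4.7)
The plan is to split the argument into two stages: first verify all the defining relations of $\AA$ hold for the operators on $V_*$, then show the induced map $\varphi: \AA e_0 \to V_*$, $a \mapsto a \cdot 1$, is bijective.

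For the first stage, I would check each relation directly. The quadratic and braid identities among the $T_i$'s are exactly Proposition \ref{Delprop} for $T_i = \Delta^*_{y_i, y_{i+1}}$. The far commutation $T_i T_j = T_j T_i$ for $|i-j|>1$ and $T_i d_- = d_- T_i$ (for $i \leq k-2$) are immediate since the operators touch disjoint variables. The relation $d_+ T_i = T_{i+1} d_+$ reduces, after using that the plethystic shift $X \mapsto X + (q-1) y_{k+1}$ commutes with every $T_j$, to the braid identity $T_{i+1}(T_1 \cdots T_k) = (T_1 \cdots T_k) T_i$, itself a consequence of the braid relation of Proposition \ref{Delprop}. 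The vanishing relations $T_1 d_+^2 = d_+^2$ and $d_-^2 T_{k-1} = d_-^2$ follow from symmetries: a short computation shows $d_+^2 F$ is symmetric in the two newest $y$-variables, on which $T_1$ then acts as the identity (via $(T_1-1)(T_1+q)=0$), and dually $d_-^2$ annihilates the $y_{k-1}, y_k$-antisymmetric part. The two mixed commutator relations I would verify by direct expansion using the plethystic formulas for $B_r$ and the substitution definition of $d_+$, showing both sides of each relation agree after collecting terms.

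For the isomorphism, surjectivity is essentially free. The partial-Dyck-path recursion \eqref{eq:recs1} puts every $\chi_k(\pi)$ for $\pi \in \DD_k$ into $\varphi(\AA e_0)$; combining this with the action of the $T_i$'s, one obtains a spanning set of each graded piece $V_{k,n}$. For injectivity, my plan is to use the relations to extract a PBW-style basis of $\AA e_0$: first push all $T_i$'s to one end using the braid and far commutation relations, then use the mixed commutator relations to reduce non-Dyck-shaped patterns of $d_\pm$ to Dyck-shape words, finally indexing the result by (partial Dyck path in $\DD_k$) $\otimes$ (basis element of the finite Hecke algebra at vertex $k$). A graded-dimension comparison with the basis $\{s_\lambda[X] \cdot y_1^{m_1} \cdots y_k^{m_k}\}$ of $V_{k,n}$ then upgrades surjectivity to bijectivity.

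The main obstacle is this last injectivity step: establishing that the given relations suffice to put every element of $\AA e_0$ into the PBW normal form. The mixed commutator relations $d_-(d_+d_- - d_-d_+) T_{k-1} = q(d_+d_- - d_-d_+)d_-$ and $T_1(d_+d_- - d_-d_+)d_+ = q d_+(d_+d_- - d_-d_+)$ are precisely what is needed to reduce excursions of quiver paths that dip below Dyck-path level, and one must verify confluence of the rewriting (equivalently, that the dimensions match in each graded component without overcounting). This matching is the real content of the theorem; all the earlier work on $\chi_k(\pi)$ feeds into it by providing the lower bound, while the relations provide the upper bound.
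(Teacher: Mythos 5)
Your outline matches the paper's overall architecture (verify relations, then show $\varphi$ is bijective via a PBW-type basis), but two of the key steps are glossed over or proposed incorrectly, so there is a genuine gap. First, your claim that $d_-^2 T_{k-1} = d_-^2$ is a ``dual'' symmetry fact — that $d_-^2$ annihilates the $y_{k-1},y_k$-antisymmetric part — is not a formality. Because $d_-$ is built from the Hall--Littlewood raising operators $B_r$ and not from a simple variable-extraction, what one actually needs is the identity that $q B_{a+2} B_{b+1} - B_{a+1} B_{b+2}$ is antisymmetric in $a,b$, which is Corollary~3.4 of \cite{haglund2012compositional}. Without invoking that nontrivial commutation relation of the $B$'s, the relation does not follow, and your proof as written would stall here.

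Second, the injectivity step, which you yourself flag as the main obstacle, is not carried through, and the normal form you propose is likely wrong. The paper does \emph{not} index its spanning set by (partial Dyck path in $\DD_k$) $\otimes$ (Hecke basis at $k$); instead it first extracts operators $y_i \in e_k\AA e_k$ out of the commutator $d_-d_+ - d_+d_-$ (Lemma~\ref{lem:commutator}), proves their commutation relations inside $\AA$ (Lemma~\ref{lem:addingback}), and then shows that the words $d_-^m y_1^{a_1}\cdots y_{k+m}^{a_{k+m}} d_+^{k+m} e_0$ with $a_{k+1}\geq\cdots\geq a_{k+m}$ span $\AA e_0$; crucially, these map under $\varphi$ to $(-1)^m y_1^{a_1}\cdots y_k^{a_k} B_{a_{k+1}+1}\cdots B_{a_{k+m}+1}(1)$, i.e.\ $y$-monomials times Hall--Littlewood polynomials, which is manifestly a basis of $V_*$. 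This is how the paper avoids the confluence issue you worry about: spanning plus mapping to a known basis gives independence for free, so no confluence check or dimension count on the $\AA$ side is needed. Your proposal instead tries to establish both spanning and independence internally in $\AA$, which is a harder route, and the proposed indexing set does not obviously give the right graded dimensions (the Hecke-algebra tensor factor would overcount). Without the $y_i$-extraction lemma and the Hall--Littlewood basis on the target, your argument does not close.
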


The proof will occupy the rest of this section.
We begin by establishing that we have a defined a representation of the algebra.
\begin{lem}
\label{replemma}
The operators $T_i$ and $d_{\pm}$ satisfy the relations of Definition \ref{defn:dpa}.
\end{lem}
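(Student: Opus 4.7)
The plan is to verify the nine families of relations in Definition~\ref{defn:dpa} in turn, organized into three groups according to the nature of the calculation.

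The first group consists of the pure Hecke relations among the $T_i$: the quadratic $(T_i-1)(T_i+q)=0$, the braid relation $T_iT_{i+1}T_i=T_{i+1}T_iT_{i+1}$, and the far-commutation $T_iT_j=T_jT_i$ for $|i-j|>1$. These are immediate from Proposition~\ref{Delprop} applied to $T_i=\Delta^*_{y_iy_{i+1}}$, with the far-commutation following from the fact that the two operators act on disjoint pairs of variables.

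The second group consists of the relations mixing $T_i$ with $d_\pm$: namely $T_id_-=d_-T_i$, $d_+T_i=T_{i+1}d_+$, $T_1d_+^2=d_+^2$, and $d_-^2T_{k-1}=d_-^2$. For $T_id_-=d_-T_i$: since $i\le k-1$, the operator $T_i$ acts only on $y_i,y_{i+1}$ and so commutes with extraction of the $y_k$-coefficient and with the $\Sym[X]$-action of $B_r$, hence with $d_-$. For $d_+T_i=T_{i+1}d_+$: the plethystic substitution $X\mapsto X+(q-1)y_{k+1}$ commutes with $T_i$ (disjoint variables), so the identity reduces to the Hecke-algebra statement $(T_1\cdots T_k)T_i=T_{i+1}(T_1\cdots T_k)$, a consequence of braid moves. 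For $T_1d_+^2=d_+^2$, I would write $d_+^2F=E_{k+1}E_k\wt F$ with $E_j:=T_1\cdots T_j$ and $\wt F:=F[X+(q-1)(y_{k+1}+y_{k+2})]$, observe that $\wt F$ is symmetric in $y_{k+1},y_{k+2}$ and hence $T_{k+1}$-fixed, and then prove the Hecke-algebra identity $T_1E_{k+1}E_k=E_{k+1}E_kT_{k+1}$. The product $E_{k+1}E_k$ is a reduced expression for the permutation $w=(s_1\cdots s_{k+1})(s_1\cdots s_k)$ of length $2k+1$, and one checks directly that $s_1w=ws_{k+1}$ with both equal to the same length-$2k$ permutation; the quadratic Hecke relation then gives $T_1T_w=T_wT_{k+1}=(1-q)T_w+qT_{s_1w}$. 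The dual identity $d_-^2T_{k-1}=d_-^2$ is verified by a parallel argument using the generating-function formula $\sum_{r\ge 0}f_{i,r}=(1-q)^{-1}h_i[(1-q)(X+y_k)]$ established in the proof of Theorem~\ref{dpthm}.

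The essential content of the lemma lies in the two commutator relations
\[
d_-(d_+d_--d_-d_+)T_{k-1}=q(d_+d_--d_-d_+)d_-,\qquad T_1(d_+d_--d_-d_+)d_+=qd_+(d_+d_--d_-d_+).
\]
To verify these I would compute both sides explicitly using \eqref{dpdef} and \eqref{dmdef}, expanding a test function in powers of $y_{k-1},y_k$ (respectively $y_k,y_{k+1}$) and applying the plethystic identities for $B_r$. Using the already-established relations $T_id_-=d_-T_i$ and $T_1d_+^2=d_+^2$, one can rewrite the second identity as $(T_1+q)d_+d_-d_+=qd_+^2d_-+d_-d_+^2$, so that both sides lie in the $T_1$-fixed subspace of $V_{k+1}$; the factor $q$ on the right is then recognized as the signature of the quadratic Hecke relation $T^2=(1-q)T+q$, arising because the identity equates two terms that differ by a $(T_1-1)$-coboundary. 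The analogous rearrangement applies to the first identity. The main technical obstacle is the bookkeeping of the many plethystic substitutions and Hecke conjugations that appear when one moves $T_{k-1}$ across the composition $d_+d_-$; I expect this calculation to be the longest part of the proof, although it will not introduce ideas beyond the ones already appearing in the proof of Theorem~\ref{dpthm}.
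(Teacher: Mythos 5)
Your first two groups are fine and essentially follow the paper's proof: the Hecke relations come from Proposition~\ref{Delprop}, $T_id_-=d_-T_i$ is immediate from disjointness of variables, $d_+T_i=T_{i+1}d_+$ is the braid move through $T_1\cdots T_k$, and $T_1d_+^2=d_+^2$ is the Hecke-word manipulation combined with $T_{k+1}$-invariance of $F[X+(q-1)(y_{k+1}+y_{k+2})]$. Your reduced-word phrasing of the last one is correct and is only cosmetically different from the paper's.

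The proposal has two genuine gaps. First, $d_-^2T_{k-1}=d_-^2$ is \emph{not} established by ``a parallel argument using the generating-function formula $\sum_r f_{i,r}=(1-q)^{-1}h_i[(1-q)(X+y_k)]$.'' That formula is used in the proof of Theorem~\ref{dpthm} to match $d_-\chi_k$ with $\chi_{k-1}(N\pi)$; it says nothing about what happens when $d_-$ is applied twice. The actual content of $d_-^2T_{k-1}=d_-^2$ is that $d_-^2$ kills the $(-q)$-eigenspace of $T_{k-1}$, i.e.\ elements $(qy_{k-1}-y_k)G$ with $G$ symmetric in $y_{k-1},y_k$; evaluating on $(qy_{k-1}-y_k)y_{k-1}^ay_k^bF$ reduces this to the identity that $qB_{a+2}B_{b+1}-B_{a+1}B_{b+2}$ is antisymmetric in $a,b$, which is a nontrivial commutation property of the $B_r$ operators (Corollary~3.4 of \cite{haglund2012compositional}), not a consequence of anything you have already written down. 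Your proposal is missing this input entirely.

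Second, for the two commutator relations you propose brute-force expansion of both sides on monomial test functions. The paper's route is substantially different and cleaner: it first establishes the auxiliary identity (Lemma~\ref{lem:commutator})
\[
(d_-d_+-d_+d_-)F=(q-1)\,T_1T_2\cdots T_{k-1}(-y_kF),
\]
which rewrites the commutator as multiplication by $y_k$ conjugated through a product of Hecke operators; with this in hand, the two relations reduce to one-line Hecke-algebra manipulations using the already-proved braid and commutation rules, together with the relation $y_i=\frac1qT_iy_{i+1}T_i$. Your observations that $(T_1+q)d_+d_-d_+=qd_+^2d_-+d_-d_+^2$ and that both sides are $T_1$-fixed are correct but do not by themselves close the argument, and you explicitly defer the remaining plethystic bookkeeping without carrying it out. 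Discovering the commutator-equals-multiplication identity of Lemma~\ref{lem:commutator} is the key idea missing from your proposal; without it you are left with a long computation whose feasibility you have asserted but not demonstrated, and with the $B_r$-antisymmetry identity unaccounted for.
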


\begin{proof}
The first line is just Proposition \ref{Delprop}, and most follow from
definition. The first one that does not is the commutation relation of $d_+$
with $T_i$. We have
\[d_+ T_i(F)=T_1\cdots T_k\left((T_i F)[X+(q-1)y_{k+1}]\right)=\]
\[T_1\cdots T_i T_{i+1} T_i \cdots T_k(F)[X+(q-1)y_{k+1}]=\]
\[T_1\cdots T_{i+1} T_{i} T_{i+1} \cdots T_k(F)[X+(q-1)y_{k+1}]=
T_{i+1} d_+(F)\]
using the braiding relations.

For the next, we have
\[d_+^2 F = T_1 T_2\cdots T_{k+1} T_1 T_2 \cdots T_k(F[X+(q-1) y_{k+1}+(q-1) y_{k+2}])=\] 
\[T_2 T_3\cdots T_{k+1} T_1 T_2\cdots T_{k+1} (F[X+(q-1) y_{k+1}+(q-1) y_{k+2}]).\]
The last $T_{k+1}$ can be removed because its argument is symmetric in 
$y_{k+1}$ and $y_{k+2}$, and we obtain $T_1^{-1} d_+^2 F$.

The next identity is more technical.
The operator image of $T_{k-1}-1$ consists of elements of the form $(q y_{k-1} - y_k) F$, where $F$ is symmetric in $y_{k-1}$ and $y_k$. Thus we need to check that $d_-^2$ vanishes on such elements. Let us evaluate $d_-^2$ on 
$$
(q y_{k-1} - y_k) y_{k-1}^a y_{k} ^b F,
$$
where $F$ does not contain the variables $y_{k-1}$ and $y_k$, and $a,b\in\Z_{\geq0}$. We obtain 
$$
(q B_{a+2} B_{b+1} - B_{a+1} B_{b+2}) F. 
$$
This expression is antisymmetric in $a$, $b$ by Corollary 3.4 of \cite{haglund2012compositional}, which implies our identity.

Next using the previous relations and Lemma \ref{lem:commutator} below write

\[d_- (d_+ d_- - d_- d_+) T_{k-1} = (q-1) d_- T_1 T_2 \cdots 
T_{k-1} y_k T_{k-1} = \]
\[q (q-1) d_- T_1 T_2 \cdots T_{k-2} y_{k-1} =\]
\begin{equation} 
q (q-1) T_1 T_2 \cdots T_{k-2} y_{k-1} d_- = q (d_+ d_- - d_- d_+) d_-. 
\label{eq:dminusrel}
\end{equation}
Similarly,
\[T_1 (d_+ d_- - d_- d_+) d_+ = (q-1) T_1 T_1 T_2 \cdots T_{k} y_{k+1}
d_+=\]
\[ (q-1) q^k T_1 y_1 T_1^{-1} T_2^{-1} \cdots T_k^{-1} d_+=
 (q-1) q^k T_1 y_1 T_1^{-1} d_+ T_1^{-1} \cdots T_{k-1}^{-1}=\]
\begin{equation}
\label{eq:dplusrel}
 (q-1) q^k d_+ y_1 T_1^{-1} \cdots T_{k-1}^{-1}
= q d_+(d_+ d_- - d_- d_+).
\end{equation}

\end{proof}

To establish the isomorphism, we first show that we can produce the
operators of multiplication by $y_i$ from $\AA$.
\begin{lem}\label{lem:commutator}
For $F\in V_k$ we have
\begin{equation}
\label{addyeq}
(d_{-} d_{+} - d_{+} d_{-}) F = (q-1) T_1 T_2\cdots T_{k-1} (-y_k
F),\quad
y_i = \frac{1}{q} T_i y_{i+1} T_i.
\end{equation}
\end{lem}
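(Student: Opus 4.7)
The plan is to handle the two parts of the lemma in turn, treating the second identity first because it is a purely local algebraic statement about the $T_i$'s on $\Q[y_i, y_{i+1}]$, which will then serve as a tool in the commutator calculation.

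For $y_i = \tfrac{1}{q}T_i y_{i+1} T_i$, both operators commute with multiplication by anything not involving $y_i$ or $y_{i+1}$, so the identity reduces to an equality of operators on $\Q[y_i, y_{i+1}]$. Writing
\[ T_i \;=\; \frac{(q-1)y_i}{y_{i+1}-y_i} \;+\; \frac{y_{i+1}-q y_i}{y_{i+1}-y_i}\, s_i, \]
where $s_i$ is the swap of $y_i$ and $y_{i+1}$, and using $s_i \, y_{i+1} = y_i \, s_i$, a direct expansion shows $T_i y_{i+1} T_i = A + B s_i$ with $B \equiv 0$ and $A = q y_i$; alternatively one checks the identity on the generating set $\{1, y_i, y_{i+1}, y_i y_{i+1}\}$ using Proposition \ref{Delprop}.

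For the commutator identity, I would expand $F \in V_k$ in powers of $y_k$ as $F = \sum_{j \ge 0} y_k^j F_j$ with $F_j \in V_{k-1}$. The definition of $d_-$ gives $d_- F = -\sum_j B_{j+1}(F_j)$, hence
\[ d_+ d_- F \;=\; T_1 \cdots T_{k-1}\Bigl(\sum_j -B_{j+1}(F_j)\bigl[X+(q-1)y_k\bigr]\Bigr). \]
For $d_- d_+ F$ I would first expand $F[X+(q-1)y_{k+1}]$ as a polynomial in $y_{k+1}$ via the plethystic addition formula, then apply $T_1 \cdots T_k$, using the second identity to commute the $y_{k+1}$-powers past the single $T_k$ and convert them into polynomials in $y_k$. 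After $d_-$ extracts the $y_{k+1}^r$-coefficient and applies $-B_{r+1}$, the two resulting ``$T_1\cdots T_{k-1}$ applied to $B$-products'' expressions can be matched term by term using the Pieri-type commutation between the $B_r$'s (Cor.~3.4 of \cite{haglund2012compositional}); the leading contributions cancel and the residue equals exactly $-(q-1)\,T_1 \cdots T_{k-1}(y_k F)$.

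The main obstacle will be the bookkeeping of how $T_k$ interacts with the plethystic shift $X \mapsto X+(q-1)y_{k+1}$. To keep this manageable I would work at the generating-function level, using the factorization $\pExp[-z(X+(q-1)y_{k+1})] = \pExp[-zX]\,\pExp[-(q-1) z y_{k+1}]$ to isolate all $y_{k+1}$-dependence in a single plethystic exponential. The mismatch between the two sides then reduces to the $\pExp[-(q-1)z y_{k+1}] - 1$ factor present only in $d_- d_+$, which, once pushed through $T_k$ via the second identity, produces precisely the correction $(q-1)\, y_k F$ inside $T_1 \cdots T_{k-1}$, completing the proof.
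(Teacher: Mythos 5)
Your strategy---computing the commutator directly at the generating-function level without first restricting $F$---is a genuinely different route from the paper, and it does work. The paper's proof introduces a twisted $\Sym[X]$-action $F\ast G = F[X+(q-1)\sum_i y_i]\,G$, checks that $d_\pm$ intertwine it, and thereby reduces to $F\in\Q[y_1,\dots,y_k]$ and then to $F=y_k^i$, after which the claim is a three-line plethystic computation. (That twisted action is re-used several times later in the paper, e.g.\ for the $\AA^*$-relations, so the authors get mileage out of it.) Your approach instead keeps $F$ general and handles the plethystic shift head-on; this can be made to go through cleanly. Concretely, write $d_-$ on $V_{k+1}$ as $-\Res_{z=0}\,z^{-2}\,G(X-(q-1)z^{-1};\,y_{k+1}{=}z^{-1})\,\pExp[-zX]\,dz$, apply $T_k$ via its explicit $\Delta^*$ form, and observe that in the term $(q-1)y_k\,F[X+(q-1)y_{k+1}]$ the substitution $y_{k+1}{=}z^{-1}$ together with the shift $X\to X-(q-1)z^{-1}$ cancels the plethystic argument back to $X$. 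One is left with
\[
[d_-,d_+]F \;=\; -\,T_1\cdots T_{k-1}\,\Res_{z=0}\,z^{-2}\,\frac{(q-1)y_kz\,F}{1-y_kz}\,\pExp[-zX]\,dz \;=\; (q-1)T_1\cdots T_{k-1}(-y_kF),
\]
since $\pExp[-(q-1)zy_k]=\frac{1-qzy_k}{1-zy_k}$ exactly matches the $T_k$-numerator.

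Some details in your write-up are off, though, and would mislead a reader who followed them literally. The factor $\pExp[-(q-1)z\,y]$ you single out arises in $d_+d_-F$ (from shifting $X\to X+(q-1)y_k$ inside $\pExp[-zX]$), not in $d_-d_+F$, and its variable is $y_k$, not $y_{k+1}$. The step ``commute $y_{k+1}$-powers past $T_k$ and convert them into polynomials in $y_k$'' is not well-posed: $T_k$ genuinely mixes $y_k,y_{k+1}$, and $T_k y_{k+1}^m$ is not a $y_k$-polynomial times something $y_{k+1}$-free; the clean move is to apply the $\Delta^*_{y_k,y_{k+1}}$ formula directly. Finally, Corollary 3.4 of Haglund--Morse--Zabrocki is a red herring for this lemma---there are no $B_aB_b$-products to symmetrize; the paper does invoke that corollary, but only for $d_-^2T_{k-1}=d_-^2$, not here. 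Your treatment of the second identity $y_i=\tfrac1q T_iy_{i+1}T_i$ via the $a+bs$ decomposition is fine and matches what the paper dismisses as ``easy.''
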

\begin{proof}

%

First, we endow $V_k$ with the following twisted action of $\Sym[X]$:
\[(F\ast G)[X] = F\left[X+(q-1)\sum_{i=1}^k y_i\right] G,\]
for $F\in \Sym[X]$, and $G\in V_k.$ 
It can be checked that the operators $d_{+}$, $d_{-}$ intertwine this action:
\begin{equation}
\label{twistedeq}
d_{+} (F\ast G) = F\ast d_{+} G, \quad d_{-} (F\ast G) = F\ast d_{-} G
\end{equation}
For the second one, for instance, it suffices to assume that $k=1$.
Then by the definition of $d_-$ given in \eqref{dmdef}, we have
\[d_-(F[X+(q-1)y_1] G)=-F[X] G[X-(q-1)y_1]\pExp[-y_1^{-1}X] \big|_{y_1^{-1}}=F \ast d_-G.\]

We will not need this, but in fact, 
if $\pi_1\in \DD_k$, $\pi_2\in \DD$, and $\pi_1 \cdot \pi_2\in \DD_k$ is their
concatenation, then we must also have that
\[\chi_k(w_1\cdot w_2) = \chi(w_2) \ast \chi_k(w_1). \]
Since the operators on both sides commute with the twisted action of
$\Sym[X]$ introduced above, 
we may assume without loss of generality that $F$ is a polynomial of $y_1,y_2,\ldots,y_k$.

Write the left hand side of the first identity as
\[d_{-} T_1 \cdots T_{k-1} T_k F - 
T_1\cdots T_{k-1} \left((d_{-} F)[X+(q-1) y_k]\right).\]
The operator $d_{-}$ in the first summand involves only the variable $y_{k+1}$. Thus we can write the left hand side as
$$
T_1 \cdots T_{k-1} (d_{-} T_k F - (d_{-}F)[X+(q-1) y_k]).
$$
Hence it is enough to prove 
$$
d_{-} T_k F - (d_{-}F)[X+(q-1) y_k] = (1-q) y_k F.
$$
It is clear that none of the operations involve the variables $y_1, y_2, \ldots, y_{k-1}$. Thus we can assume $F=y_k^i$ for $i\in\Z_{\geq 0}$ without loss of generality. Direct computation gives
$$
T_k(y_k^i)=y_{k+1}^i + (1-q) \sum_{j=1}^i y_k^j y_{k+1}^{i-j}.
$$
Thus the left hand side equals
\[-h_{i+1}[-X] - (1-q)\sum_{j=1}^i y_k^j h_{i-j+1}[-X] + h_{i+1}[-X-(q-1)y_k]=\]
\[- (1-q)\sum_{j=1}^i y_k^j h_{i-j+1}[-X] + (1-q)\sum_{j=1}^{i+1} y_k^j h_{i-j+1}[-X]
= (1-q) y_k^{i+1}.\]

The second relation is easy.
\end{proof}

The operators of multiplication by $y_i$ are characterized by these
relations, and therefore come from elements of $\AA$.
We next establish the relations these operators satisfy within $\AA$:
\begin{lem}\label{lem:addingback}
For $k\in\Z_{>0}$ define elements $y_1,\ldots,y_k\in e_k \AA e_k$ by
solving for $y_iF$ in the identities \eqref{addyeq}.
Then the following identities hold in $\AA$:
$$
y_i T_j = T_j y_i\qquad\text{for $i\notin\{j,j+1\}$,}
$$
$$
y_i d_- = d_- y_i,\qquad d_+ y_i = T_1 T_2 \cdots T_i y_i (T_1 T_2 \cdots T_i)^{-1} d_+,
$$
$$
y_i y_j = y_j y_i\quad\text{for any $i,j$.}
$$
\end{lem}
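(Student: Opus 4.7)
The plan is to establish the identities in two stages: first at the maximal applicable index (namely, $y_k$, or $y_{k-1}$ in the case of relation $y_i d_- = d_- y_i$), then by descending induction on $k-i$ using the recursion $y_i = \frac{1}{q} T_i y_{i+1} T_i$ from Lemma \ref{lem:commutator}, together with the braid relations and the pull-through laws $T_j d_- = d_- T_j$, $d_+ T_j = T_{j+1} d_+$ from Definition \ref{defn:dpa}.

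The base cases come essentially for free from the algebra relations. The commutation $y_k T_j = T_j y_k$ for $j\leq k-2$ follows from the identity $(d_+d_- - d_-d_+)T_j = T_{j+1}(d_+d_- - d_-d_+)$ (itself an immediate consequence of the pull-through laws) combined with braid manipulations of $T_1\cdots T_{k-1}$ in the defining relation $(q-1)T_1\cdots T_{k-1} y_k = d_+d_- - d_-d_+$. The commutations $d_- y_{k-1} = y_{k-1} d_-$ and $d_+ y_k = T_1\cdots T_k\, y_k\, (T_1\cdots T_k)^{-1} d_+$ are precisely what the two non-obvious defining relations $d_-(d_+d_- - d_-d_+)T_{k-1} = q(d_+d_- - d_-d_+)d_-$ and $T_1(d_+d_- - d_-d_+)d_+ = qd_+(d_+d_- - d_-d_+)$ encode: substituting the definition of $y_k$ and simplifying via pull-through reduces each claimed identity to the corresponding algebra relation, in the same spirit as the computations \eqref{eq:dminusrel} and \eqref{eq:dplusrel} from the proof of Lemma \ref{replemma}. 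The inductive step for (1)--(3) is then a routine rearrangement; for instance, assuming $d_- y_{i+1} = y_{i+1} d_-$, one has $d_- y_i = \frac{1}{q} d_- T_i y_{i+1} T_i = \frac{1}{q} T_i d_- y_{i+1} T_i = \frac{1}{q} T_i y_{i+1} T_i d_- = y_i d_-$, where the pull-through $T_i d_- = d_- T_i$ is valid for $i \leq k-2$.

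The main obstacle is commutativity $y_i y_j = y_j y_i$. Using relation (1) and the recursion $y_i = T_i y_{i+1} T_i/q$, the non-adjacent case $|i-j|\geq 2$ reduces to the adjacent case $[y_i, y_{i+1}] = 0$. For the adjacent case, setting $C = [y_i, y_{i+1}]$, a short computation from the identities $T_i y_{i+1} = y_i T_i + (q-1) y_i$ and $y_{i+1} T_i = T_i y_i + (q-1) y_i$ (both immediate from $y_i = T_i y_{i+1} T_i/q$ and $T_i^{-1} = (T_i + q - 1)/q$) yields $T_i C T_i + qC = 0$, which on its own does not force $C = 0$. Closing the argument is the hardest part: my plan is to first establish $[y_{k-1}, y_k] = 0$ at the top level via a direct but longer manipulation of the explicit formula $y_k = \frac{1}{q-1} T_{k-1}^{-1}\cdots T_1^{-1}(d_+d_- - d_-d_+)$ together with the two hard defining relations of $\AA$, and then descend to all remaining adjacent pairs by induction on $k-i$, using the recursion and the already-established identities (1)--(3) to manage the $T$-conjugations.
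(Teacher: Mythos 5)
Your handling of identities (1)--(3) matches the paper: extreme cases via the explicit formula for $y_k$ (or $y_1$) and the hard defining relations, then $T$-conjugation for intermediate $i$. Your observation that $T_i C T_i + q C = 0$ alone cannot force $C=0$ is correct (I verified the computation), and your inductive descent from $[y_{i+1},y_{i+2}]=0$ to $[y_i,y_{i+1}]=0$ via the recursion and braid relations does in fact go through cleanly (writing both $y_iy_{i+1}$ and $y_{i+1}y_i$ as $T$-conjugates of $M=y_{i+2}T_{i+1}y_{i+2}$ and checking $T_{i+1}M - M T_{i+1} = q[y_{i+1},y_{i+2}]$). So the reduction structure you set up is sound.

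The genuine gap is the base case $[y_{k-1},y_k]=0$, which you leave as ``a direct but longer manipulation'' without carrying it out, and this is where your route diverges from the paper's to your disadvantage. The paper instead proves the \emph{non-adjacent extreme} case $[y_1,y_k]=0$: since $y_1$ commutes with $T_2,\ldots,T_{k-1}$ (established in part (1)) and $(d_+d_- - d_-d_+)y_1 = (T_1 y_1 T_1^{-1})(d_+d_- - d_-d_+)$ (from parts (2) and (3)), the explicit formula $y_k=\frac{1}{q-1}T_{k-1}^{-1}\cdots T_1^{-1}(d_+d_--d_-d_+)$ gives
\[
y_ky_1 = \tfrac{1}{q-1}T_{k-1}^{-1}\cdots T_1^{-1}(T_1y_1T_1^{-1})(d_+d_--d_-d_+) = \tfrac{1}{q-1}T_{k-1}^{-1}\cdots T_2^{-1}\,y_1\,T_1^{-1}(d_+d_--d_-d_+) = y_1y_k
\]
after sliding $y_1$ leftward past the $T_j^{-1}$ with $j\geq 2$. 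This one computation then reaches \emph{all} pairs $[y_i,y_j]$ by $T$-conjugation, making your detour through adjacent pairs unnecessary. Your proposed direct attack on $[y_{k-1},y_k]$ lacks the crucial convenience that $y_1$ commutes with almost all $T_j$; both $y_{k-1}$ and $y_k$ fail to commute with $T_{k-2}$ and $T_{k-1}$, so the manipulation you defer is substantially harder than the one the paper does, and as written the proposal does not close the argument. I would recommend replacing the base case $[y_{k-1},y_k]$ with $[y_1,y_k]$, at which point the adjacent-reduction machinery can be dropped entirely.
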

\begin{proof}
Note that $y_1$ can be written as
$$
y_1 = \frac{1}{q^{k-1}(q-1)} (d_+ d_- - d_- d_+) T_{k-1} \cdots T_1.
$$
Our task becomes easier if we notice that it is enough to check the first identity for $i=1$ and $i=k$, the second one for $i=k$, the third one for $i=1$ and the last one for $i=1$, $j=k$. The other cases can be deduced from these by applying the $T$-operators.

For $j>1$ we have
\[y_1 T_j = \frac{1}{q^{k-1}(q-1)} (d_+ d_- - d_- d_+) T_{k-1} \cdots T_1 T_j=\]
\[\frac{1}{q^{k-1}(q-1)} (d_+ d_- - d_- d_+) T_{j-1} T_{k-1} \cdots T_1
= T_j y_1.\]
Similarly we verify that $y_k$ commutes with $T_j^{-1}$ hence with $T_j$ for $j<k-1$.

Reversing the arguments in (\ref{eq:dminusrel}) and (\ref{eq:dplusrel}) we verify the second and the third identities.

Thus it is left to check that $y_k y_1 = y_1 y_k$. We assume $k\geq 2$. Write the left hand side as
$$
y_k y_1 =\frac{1}{q-1} T_{k-1}^{-1} \cdots T_1^{-1} (d_+ d_- - d_- d_+) y_1
= \frac{1}{q-1} T_{k-1}^{-1} \cdots T_1^{-1} (T_1 y_1 T_1^{-1}) (d_+ d_- - d_- d_+)
$$
using the already established commutation relations and that $k\geq 2$ to swap $T_1 y_1 T_1^{-1}$ and $d_-$. Performing the cancellation we obtain $y_1 y_k$.
\end{proof}

The following lemma completes the proof of the theorem:
\begin{lem}
\label{lem:algbasis}
The elements of the form 
\begin{equation}
\label{basiseq}
d_-^m y_1^{a_1} \cdots y_{k+m}^{a_{k+m}} d_+^{k+m} e_0 
\end{equation}
with $a_{k+1}\geq a_{k+2}\geq\cdots \geq a_{k+m}$ form a basis of $\AA e_0$. 
Furthermore, the representation $\varphi$ maps these elements to a basis of $V_*$.
\end{lem}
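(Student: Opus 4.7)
The plan is to establish the lemma in two coordinated steps: first compute $\varphi$ on the standard-form elements \eqref{basiseq} and show the images form a basis of $V_*$, which gives linear independence in $\AA e_0$; then show the same elements span $\AA e_0$ by reducing arbitrary words to standard form using the relations of Definition \ref{defn:dpa}. Together these yield both the basis property in $\AA e_0$ and the isomorphism $\varphi$.

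For the image computation, I would start from $\varphi(e_0) = 1 \in V_0$. Since $T_i(1) = 1$ (by direct calculation from $\Delta^*_{uv}(1) = 1$) and $1[X + (q-1) y_{k+1}] = 1$, one has $d_+^{k+m}(1) = 1 \in V_{k+m}$. The $y$-monomial then acts by multiplication, and iterating $d_-(y_j^i F) = -B_{i+1} F$ peels off the topmost $y$ variable at each step, yielding
\[
\varphi\bigl(d_-^m y_1^{a_1}\cdots y_{k+m}^{a_{k+m}} d_+^{k+m} e_0\bigr) = (-1)^m y_1^{a_1}\cdots y_k^{a_k}\cdot B_{a_{k+1}+1}\cdots B_{a_{k+m}+1}(1),
\]
since the remaining $y$'s carry no $X$-dependence and each $B_r$ passes through them as a scalar on $\Sym[X]$. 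Setting $\lambda_j = a_{k+j}+1$, the condition $a_{k+1}\ge\cdots\ge a_{k+m}\ge 0$ makes $\lambda$ a partition. A straightforward triangularity computation starting from $B_r(1) = (-1)^r e_r$ and using the formula $B_r B_s(1) = (-1)^{r+s}(e_r e_s + (q-1) e_{r+1} e_{s-1})$ establishes that $\{B_\lambda(1) : \lambda \text{ a partition}\}$ is a $\Q(q)$-basis of $\Sym[X]$. Combined with the monomial basis of $\Q[y_1,\ldots,y_k]$, the images of \eqref{basiseq} form a basis of $V_k$.

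For spanning, I would reduce any word in the generators of $\AA e_0$ to standard form, treating $y_i \in \AA$ as additional generators via Lemma \ref{lem:commutator}. The steps are: (i) apply $d_- d_+ - d_+ d_- = -(q-1) T_1\cdots T_{k-1} y_k$ to push all $d_-$'s to the left of all $d_+$'s modulo lower-order terms; (ii) apply $y_i d_- = d_- y_i$ and $d_+ y_i = T_1\cdots T_i\, y_i\, (T_1\cdots T_i)^{-1} d_+$ from Lemma \ref{lem:addingback} to move all $y$'s into the middle block; (iii) apply $T_i d_- = d_- T_i$ and $d_+ T_i = T_{i+1} d_+$ to shift all $T_i$'s to the right, where the absorption $T_i d_+^{k+m} e_0 = d_+^{k+m} e_0$ for $1 \le i \le k+m-1$---a consequence of $T_1 d_+^2 = d_+^2$ together with iterated use of $d_+ T_i = T_{i+1} d_+$---eliminates them; (iv) apply $y_i y_j = y_j y_i$ to sort the $y$-monomial. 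Finally, the partition condition on $(a_{k+1},\ldots,a_{k+m})$ is imposed by invoking the Hecke identity $T_i y_{i+1} = y_i T_i + (q-1) y_i$ (derived from $T_i y_{i+1} T_i = q y_i$ and the quadratic relation) together with $T$-absorption, to derive swap relations that rewrite unsorted exponent pairs as sorted ones plus lower-$m$ corrections.

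The main obstacle is precisely this final symmetrization step: the algebra relations must suffice to rewrite any monomial with unsorted $(a_{k+1},\ldots,a_{k+m})$ as an $\AA$-combination of sorted ones. The cleanest way to resolve it is a dimension comparison: the reduction above writes $\AA e_0$ as the span of all standard-form elements (without the partition condition), the $\varphi$-images of the sorted subset already form a basis of $V_*$, and so $\varphi$ is surjective. A careful induction on $m$ and on a lexicographic order on $(a_{k+1},\ldots,a_{k+m})$---where each inductive step identifies the required correction terms via their images under $\varphi$---then shows that every unsorted monomial reduces to a sorted combination in $\AA e_0$. This simultaneously closes the spanning argument and establishes injectivity of $\varphi$, completing the proof of both the basis property and the isomorphism.
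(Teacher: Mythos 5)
Your overall architecture matches the paper's: compute $\varphi$ on the standard monomials (getting $(-1)^m y_1^{a_1}\cdots y_k^{a_k} B_{a_{k+1}+1}\cdots B_{a_{k+m}+1}(1)$, whose sorted subset is a basis of $V_*$ via Hall--Littlewoods), and show spanning by reduction rules derived from Definition \ref{defn:dpa} and Lemma \ref{lem:addingback}. Steps (i)--(iv) of your reduction correctly produce the \emph{unsorted} standard spanning set. But there is a genuine gap in the final symmetrization step, and your proposed workaround does not close it.

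The problem: you want to show that any standard monomial with $a_j<a_{j+1}$ for some $j>k$ is an $\AA$-linear combination of sorted ones. You propose to ``identify the required correction terms via their images under $\varphi$.'' This is circular. Knowing what the coefficients must be in $V_*$ tells you nothing about whether the corresponding identity actually holds in $\AA e_0$; concluding that it does requires precisely the injectivity of $\varphi$ (or some bound on $\dim e_k \AA e_0$) that you are in the middle of establishing. Also, the $T$-absorption you invoke ($T_1 d_+^2 = d_+^2$, hence $T_i d_+^{k+m}e_0 = d_+^{k+m}e_0$) lives on the $d_+$ side and is only useful for eliminating stray $T$'s in the reduction to standard form; it does not help you sort the exponents.

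The ingredient you are missing is the \emph{$d_-$-side} absorption relation $d_-^2 T_{k-1} = d_-^2$, which appears in Definition \ref{defn:dpa}. Combined with $T_i d_- = d_- T_i$ it yields
\[
d_-^m\,(1-T_j)\, e_{k+m} \;=\; 0 \qquad\text{for } k<j<k+m,
\]
a genuine identity in $\AA$. Since $T_j$ commutes with $y_j y_{j+1}$, one can insert $(1-T_j)$ after $y_1^{a_1}\cdots y_j^{a_j} y_{j+1}^{a_j}$, push the $T_j$ through the remaining $y$'s via $T_j y_{j+1} = y_j(T_j+(q-1))$ and past $d_+^{k+m}e_0$ via the $d_+$-side absorption, and thereby express the unsorted monomial as a combination of \emph{lexicographically greater} terms in the \emph{same} $m$ (not ``lower-$m$ corrections,'' as you wrote). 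This purely algebraic reduction is what makes the spanning argument work without any appeal to $\varphi$; injectivity of $\varphi$ then follows as a corollary once the sorted set is shown to both span $\AA e_0$ and map to linearly independent elements of $V_*$.
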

\begin{proof}
We first show that elements of the form \eqref{basiseq}, with no
condition
on the $a_i$ span $\AA$.
It suffices to check that the span of these elements is 
invariant under $d_-$, $T_i$ and $d_+$. This can be done by applying
the following reduction rules that follow 
from the definition of $\AA$ and Lemma \ref{lem:addingback}:
\[T_i d_- \to d_- T_i,\quad T_j y_i \to y_i T_j \quad(i\notin \{j,j+1\}),\]
\[T_i y_i \to y_{i+1} T_i + (1-q) y_i, \quad T_i y_{i+1} \to y_{i} T_i + (q-1) y_i,\]
\[T_i d_+^{k+m} e_0 \to d_+^{k+m} e_0,\]
\[d_+ d_- \to d_- d_+ + (q-1) T_1 T_2 \cdots T_{k-1} y_k,\quad y_i d_- \to d_- y_i. \]

The next step is to reduce the spanning set:
We can use the following identity, which follows from $d_-^2 T_{k-1}=d_-^2$:
\[d_-^m (1-T_j) y_1^{a_1}\cdots y_{k+m}^{a_{k+m}} d_+^{k+m} e_0 = 0 \qquad (k<j<k+m).\]
Note that $T_j$ commutes with $y_j y_{j+1}$. Suppose $a_j<a_{j+1}$. Then we can rewrite the above identity as
\[0=d_-^m y_1^{a_1}\cdots y_j^{a_j} y_{j+1}^{a_j} 
(1-T_j) y_{j+1}^{a_{j+1}-a_j} y_{j+2}^{a_{j+2}} \cdots y_{k+m}^{a_{k+m}} d_+^{k+m} e_0.\]
Using $T_j y_{j+1} = y_j (T_j+(q-1))$, $T_j y_r = y_r T_j$ for
$r>j+1$, and $T_j d_+^{k+m} e_0=d_+^{k+m} e_0$ we can rewrite the
identity as vanishing of a linear combination of terms of the form
\eqref{basiseq}, 
and the lexicographically smallest term is precisely 
\[d_-^m y_1^{a_1}\cdots y_{k+m}^{a_{k+m}} d_+^{k+m} e_0.\]
Thus we can always reduce terms of the form \eqref{basiseq} which
violate the condition $a_{k+1}\geq a_{k+2}\geq\cdots \geq a_{k+m}$ to
a linear combination of lexicographically greater terms, showing that
the subspace in the lemma at least spans $\AA e_0$.

We now show that they map to a basis of $V_*$, which also establishes
that they are independent, completing the proof.
Consider the image of the 
elements of our spanning set
\[d_-^m y_1^{a_1} \cdots y_{k+m}^{a_{k+m}} d_+^{k+m} (1) 
= d_-^m (y_1^{a_1} \cdots y_{k+m}^{a_{k+m}}) \]
\begin{equation}
\label{eq:elements}
(-1)^m y_1^{a_1} y_2^{a_2} \cdots y_k^{a_k} B_{a_{k+1}+1}
B_{a_{k+2}+1} \cdots B_{a_{k+m}+1} (1). 
\end{equation}
Notice that $\lambda:=(a_{k+1}+1,a_{k+2}+1,\ldots,a_{k+m}+1)$ is a partition, so 
\[B_{a_{k+1}+1} B_{a_{k+2}+1} \cdots B_{a_{k+m}+1} (1) \]
is a multiple of the Hall-Littlewood polynomial
$\Ht_{\lambda'}[-X;1/q,0]$. These polynomials form a basis of the space of
symmetric functions, thus the elements \eqref{eq:elements} form a
basis of $\bigoplus_{k\in\Z_{\geq0}} V_k$.

\end{proof}

\section{Conjugate structure}

It is natural to ask if there is a way to extend 
$\nabla$ to the spaces $V_k$, recovering the original operator at $k=0$. 
What we have found is that it is simpler to extend the composition 
\begin{equation}
\label{omega1}
\mathcal{N}(F)=\nabla \bar\omega F=\nabla \omega\overline{F}
\end{equation}
where the conjugation simply makes the substitution 
$(q,t)=(q^{-1},t^{-1})$, $\omega(F)=F[-X]$ is the Weyl involution
up to a sign, and $\bar\omega$ denotes the composition.
This is a very interesting operator, which in fact is an antilinear 
involution on $\Sym[X]$ corresponding to dualizing vector bundles
and tensoring with $\mathcal{O}(1)$ 
in the Haiman-Bridgeland-King-Reid picture, which identifies $\Sym[X]$
with the equivariant $K$-theory of the Hilbert scheme of points in the 
complex plane \cite{BKR}. The key to our proof is to extend this operator 
to an antilinear involution on every $V_k$, suggesting that $V_k$
should have a geometric interpretation as well. 
Since this paper was completed, E. Gorsky and the authors have discovered
this connection in terms of a smooth subscheme of the flag Hilbert scheme. 
This will be explained in an upcoming paper,
which also explains a family of ways to extend the operator 
$\nabla$ itself in addition to the involution $\mathcal{N}$,
also in terms of line bundles.

We will define the operator, which was discovered experimentally to
have nice properties, by explicitly constructing the action of
$\AA$ conjugated by the conjectural involution $\mathcal{N}$.
Let $\AA^*=\AA_{q^{-1}}$, and label the corresponding generators by 
$d^*_{\pm},T_i^*,e_i^*$. Denote by $z_i$ the image of $y_i$ under the 
isomorphism from $\AA$ to $\AA^*$ which sends generators to 
generators, and is antilinear with respect to $q\mapsto q^{-1}$. 
\begin{thm}
\label{Astarthm}
There is an action of $\AA^*$ on $V_*$ given by the assignment 
\begin{equation}
\label{eq:dplusstar}
T_i^*=T^{-1}_i,\quad d^*_{-}=d_-,\quad e_i^*=e_i,\quad 
(d_+^* F)[X] = \gamma F[X+(q-1)y_{k+1}],
\end{equation}
where $F\in V_k$ and $\gamma$ is the operator which sends $y_i$ to $y_{i+1}$ for $i=1,\ldots,k$ and $y_{k+1}$ to $t y_1$. 
Furthermore, it satisfies the additional relations 
\begin{equation}
\label{addreleq}
z_{i+1} d_+ = d_+ z_i,\quad y_{i+1} d_+^* = d_+^* y_i,\quad z_1 d_+ = -y_1 d_+^* t q^{k+1},\quad 
d_+^* d_+^m (1) = d_+^{m+1}(1) 
\end{equation}
for any $m\geq 0$. 
\end{thm}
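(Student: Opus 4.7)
The plan is to verify the defining relations of $\AA^* = \AA_{q^{-1}}$ for the proposed operators, then derive the additional relations \eqref{addreleq}. Many relations transfer directly from their counterparts in Lemma \ref{replemma} by formal inversion of $T_i$: the Hecke and braid relations for $T_i^* = T_i^{-1}$; the commutation $T_i^* d_-^* = d_-^* T_i^*$; and $(d_-^*)^2 T_{k-1}^* = (d_-^*)^2$ (multiply the $\AA$-analog by $T_{k-1}^{-1}$). The relation $d_+^* T_i^* = T_{i+1}^* d_+^*$ follows because the cyclic shift $\gamma$ renames $y_i \mapsto y_{i+1}$ and so conjugates the $\Delta^*_{y_i, y_{i+1}}$ action on $V_k$ into the $\Delta^*_{y_{i+1}, y_{i+2}}$ action on $V_{k+1}$, while the substitution $X \mapsto X + (q-1) y_{k+1}$ does not involve $y_1, \ldots, y_{k-1}$. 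The relation $T_1^* (d_+^*)^2 = (d_+^*)^2$ is verified by direct computation: one finds $(d_+^*)^2 F = F[X + (q-1) t (y_1 + y_2);\, y_3, \ldots, y_{k+2}]$, which is symmetric in $y_1, y_2$ and hence fixed by $T_1$ (and so by $T_1^{-1}$).

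The nontrivial defining relations are the two commutator identities. The key step is to establish an analog of Lemma \ref{lem:commutator}:
\[(d_- d_+^* - d_+^* d_-) F = (1 - q^{-1}) T_1^{-1} T_2^{-1} \cdots T_{k-1}^{-1} (-z_k F),\]
where $z_k$ is a specific operator on $V_k$ which we take as the definition of the action of the element $z_k \in \AA^*$. The derivation of this formula mirrors that of Lemma \ref{lem:commutator}: using the intertwining with a suitable twisted $\Sym[X]$-action one reduces to $F = y_k^i$, then carries out the plethystic computation, with the cyclic shift $\gamma$ now playing the role that the product $T_1 T_2 \cdots T_k$ played in the earlier argument. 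Once this commutator formula is in hand, the two remaining defining relations of $\AA^*$ follow by the same algebraic manipulations used in the proof of Lemma \ref{replemma} (compare \eqref{eq:dminusrel} and \eqref{eq:dplusrel}), with $q \mapsto q^{-1}$ throughout.

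For the additional relations \eqref{addreleq}: the identity $y_{i+1} d_+^* = d_+^* y_i$ is immediate from $\gamma(y_i) = y_{i+1}$ for $i \leq k$, and $d_+^* d_+^m(1) = d_+^{m+1}(1)$ is trivial since $d_+$ and $d_+^*$ both fix the constant function $1$, so both sides equal $1$. The identity $z_{i+1} d_+ = d_+ z_i$ is the mirror of $y_{i+1} d_+^* = d_+^* y_i$ and follows once $z_i$ is realized explicitly on $V_*$ via the analog of Lemma \ref{lem:addingback}. The most delicate relation is $z_1 d_+ = -y_1 d_+^* t q^{k+1}$: unwinding the explicit formulas for $d_+$ and $d_+^*$ together with the commutator expression for $z_1$, the factor $t$ arises from the cyclic shift $y_{k+1} \mapsto t y_1$ inherent in $\gamma$, while the factor $q^{k+1}$ comes from the Hecke relations generated by $T_1 \cdots T_k$ in $d_+$.

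The main technical obstacle is the commutator computation $[d_-, d_+^*]$ and the correct identification of $z_k$, which is what fixes the precise scalar and sign in the commutator formula. This formula also governs the delicate cross-relation $z_1 d_+ = -y_1 d_+^* t q^{k+1}$; once these computations are carried out, the remaining deductions are straightforward algebra following the pattern of Lemma \ref{replemma}.
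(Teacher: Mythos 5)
Your list of ``easy'' relations and the computation that $(d_+^*)^2 F = F[X+(q-1)t(y_1+y_2);\,y_3,\ldots,y_{k+2}]$ are correct and match the paper. Your verification of $y_{i+1}d_+^* = d_+^* y_i$ and $d_+^*d_+^m(1)=d_+^{m+1}(1)$ is also fine. However, the core of your argument takes a route that has a genuine gap.

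You propose to first prove an $\AA^*$-analog of Lemma~\ref{lem:commutator}, then obtain the two nontrivial defining relations ``by the same algebraic manipulations'' as in \eqref{eq:dminusrel}--\eqref{eq:dplusrel}. Examine what those manipulations actually use: the step
$d_-\,T_1\cdots T_{k-2}\,y_{k-1} = T_1\cdots T_{k-2}\,y_{k-1}\,d_-$
requires $y_{k-1}d_- = d_- y_{k-1}$, which is immediate for $\AA$ because $y_{k-1}$ is literal multiplication by a variable and $d_-$ only touches $y_k$. The $\AA^*$-analog would require $z_{k-1}d_- = d_- z_{k-1}$ on $V_*$, but $z_{k-1}$ is not multiplication by a variable --- it is a composite built from $d_+^*$, $d_-$ and $T_i^{-1}$. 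Proving $z_{k-1}d_-=d_-z_{k-1}$ on the representation is itself of the same difficulty as the relations you are trying to derive, and cannot be invoked as a formal consequence of the $\AA^*$-relations without circularity. You also assert that $z_{i+1}d_+ = d_+ z_i$ ``is the mirror'' of $y_{i+1}d_+^* = d_+^* y_i$: that presupposes the $\AA\leftrightarrow\AA^*$ symmetry of $\tilde\AA$ and the involution $\mathcal N$, which are constructed \emph{after} this theorem. Finally, your $\AA^*$-commutator has $(1-q^{-1})$ where $q\mapsto q^{-1}$ applied to Lemma~\ref{lem:commutator} gives $(q^{-1}-1)$, a sign slip worth fixing.

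The paper avoids these pitfalls by verifying the two nontrivial relations \emph{directly} on $V_*$: multiply through to turn each relation into the statement that a certain operator $A$ annihilates elements symmetric in $y_{k-1},y_k$ (resp.~reduces to monomials $y_k^a$), use the family of twisted multiplications $\ast_m$ to strip off the symmetric-function factor, and then perform an explicit plethystic computation with $\Gamma_+(u)$ and the $B_i$ operators, invoking Corollary~3.4 of \cite{haglund2012compositional} for the antisymmetry that makes $A$ vanish. The relations $z_1d_+=-y_1d_+^*tq^{k+1}$ and $z_{i+1}d_+=d_+z_i$ are then proved by a second round of such reductions (the former reduces to evaluation on $1\in V_k$, the latter to evaluation on $y_k^a$). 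If you want to salvage your plan, you would need to actually prove the $\AA^*$-commutator formula \emph{and} $z_id_-=d_-z_i$ directly on the representation --- at which point you are doing essentially the same plethystic work as the paper, just packaged differently.
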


The statement is equivalent to validity of a certain set of relations 
satisfied by the operators. These will be verified in the following propositions.

First we list the obvious relations:
\begin{prop}
$$
d_+^* T_i^{-1} = T_{i+1}^{-1} d_+^*, \qquad T_1^{-1} d_+^{*2} = d_+^{*2}, 
\qquad d_+^* y_i = y_{i+1} d_+^*.
$$
\end{prop}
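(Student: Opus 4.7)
All three identities in the proposition are labeled ``obvious'' because each one reduces to a direct unpacking of the explicit formula $(d_+^* F)[X] = \gamma\, F[X+(q-1)y_{k+1}]$ from \eqref{eq:dplusstar}. My plan is to dispatch them in order of increasing subtlety, keeping careful track of which $y$-variables appear.

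For $d_+^* y_i = y_{i+1} d_+^*$ (with $i\le k$), I would note that multiplication by $y_i$ commutes with the plethystic substitution $X\mapsto X+(q-1)y_{k+1}$ (since $y_i$ does not involve $X$), and that the homomorphism $\gamma$ satisfies $\gamma(y_i\cdot G)=\gamma(y_i)\cdot\gamma(G)=y_{i+1}\cdot\gamma(G)$ because $i\le k$. Applying these two remarks on each side gives the identity.

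For $d_+^* T_i^{-1}=T_{i+1}^{-1}d_+^*$ (with $1\le i\le k-1$), I would observe that $T_i^{-1}$ involves only the variables $y_i,y_{i+1}$ and therefore commutes with substitution in $X$. So it suffices to show $\gamma\,T_i^{-1} = T_{i+1}^{-1}\,\gamma$ as operators on polynomials in $y_1,\dots,y_{k+1}$. Since $i+1\le k$, the map $\gamma$ restricts on the subalgebra generated by $y_i,y_{i+1}$ to the pure renaming $y_i\mapsto y_{i+1}$, $y_{i+1}\mapsto y_{i+2}$; under this renaming the formula $\Delta^*_{y_i,y_{i+1}}$ is carried to $\Delta^*_{y_{i+1},y_{i+2}}$ by inspection of the definition. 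Taking inverses gives the identity.

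For $T_1^{-1}d_+^{*2}=d_+^{*2}$, my plan is to show $d_+^{*2}F$ is symmetric in $y_1,y_2$, from which the identity follows because $T_1^{-1}=q^{-1}\Delta_{y_1,y_2}$ acts as the identity on functions symmetric in $y_1,y_2$ (an immediate check from the definition of $\Delta_{uv}$: the numerator becomes $(qv-qu)P(u,v)$). To establish the symmetry, I would expand $F=\sum_\alpha c_\alpha(X)\,y^\alpha$ and compute
\[
d_+^* F \;=\; \sum_\alpha c_\alpha\!\bigl[X+(q-1)t y_1\bigr]\, y_2^{\alpha_1}\cdots y_{k+1}^{\alpha_k},
\]
after which a second application of $d_+^*$ substitutes $X\mapsto X+(q-1)y_{k+2}$ and then applies $\gamma_{k+1}$; the $y_{k+2}$ becomes $ty_1$ and the existing $ty_1$ becomes $ty_2$, yielding
\[
d_+^{*2} F \;=\; \sum_\alpha c_\alpha\!\bigl[X+(q-1)t(y_1+y_2)\bigr]\, y_3^{\alpha_1}\cdots y_{k+2}^{\alpha_k},
\]
which is manifestly symmetric in $y_1,y_2$.

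None of these steps presents a real obstacle; the only point where care is needed is in the last identity, where the two occurrences of $ty_1$ (one introduced by each $\gamma$) must be tracked separately to see how they combine into the symmetric expression $t(y_1+y_2)$.
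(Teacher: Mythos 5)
Your verification is correct and fills in exactly the kind of direct unpacking of the formula $(d_+^*F)[X]=\gamma\,F[X+(q-1)y_{k+1}]$ that the paper elides with "Easy from the definition." In particular, the computation showing that $d_+^{*2}F=\sum_\alpha c_\alpha[X+(q-1)t(y_1+y_2)]\,y_3^{\alpha_1}\cdots y_{k+2}^{\alpha_k}$ is manifestly symmetric in $y_1,y_2$ is the right way to handle the one identity that is not a pure change-of-variable argument.
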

\begin{proof}
Easy from the definition.
\end{proof}

To verify the rest of the relations, we use an approach similar to the one used in the proof 
of Lemma \ref{lem:commutator}. 
Now we need not just one, but a family of twisted multiplications:
For $F\in\Sym[X]$, $G\in V_k$, $m=0,1,2,\ldots,k$ put 
\[(F \ast_m G)[X] = F\left[X + (q-1)\left(\sum_{i=1}^m t y_i + \sum_{i=m+1}^k y_i\right)\right] G. \]
It is not hard to show that they satisfy 
\begin{equation}\label{eq:twisted mult n}
d_+^*(F\ast_m G) = F \ast_{m+1} d_+^* G,\quad 
d_-(F\ast_m G) = F\ast_m d_- G,
\end{equation}
where $F\in \Sym[X]$, and $G\in V_k$,
the first identity holds for $0\leq m\leq k$, and the second one for $0\leq m<k$.

Let us first verify
\begin{prop}
\begin{equation}\label{eq:conjrel1}
d_-(d_+^* d_- - d_- d_+^*) T_{k-1}^{-1} = q^{-1} (d_+^* d_- - d_- d_+^*) d_-\quad(k\geq 2).
\end{equation}
\end{prop}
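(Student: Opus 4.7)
The identity \eqref{eq:conjrel1} is the direct star-analog of the $\AA$-relation $d_-(d_+ d_- - d_- d_+) T_{k-1} = q (d_+ d_- - d_- d_+) d_-$, whose verification in the proof of Lemma \ref{replemma} (equation \eqref{eq:dminusrel}) proceeded via three ingredients: the closed form $d_+ d_- - d_- d_+ = (q-1) T_1 \cdots T_{k-1} y_k$ from Lemma \ref{lem:commutator}; the identity $T_{k-1} y_k T_{k-1} = q y_{k-1}$; and the commutation of $d_-$ with $T_i$ for $i \le k-2$ and with multiplication by the $y_i$'s from Lemma \ref{lem:addingback}. My plan is to mirror this chain step by step in the starred setting.

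First, I would derive the starred analog of Lemma \ref{lem:commutator}, a closed-form expression
\[
d_+^* d_- - d_- d_+^* = (q-1)\, T_1^{-1} T_2^{-1} \cdots T_{k-1}^{-1} \cdot M,
\]
where $M$ is a multiplication operator on $V_k$ whose shape is dictated by the cyclic shift $\gamma\colon y_{k+1}\mapsto t y_1$ embedded in the definition of $d_+^*$. The argument parallels the proof of Lemma \ref{lem:commutator}: using the twisted $\ast_m$-actions introduced earlier (which intertwine $d_-$ via $d_-(F \ast_m G)=F \ast_m d_- G$ and $d_+^*$ via $d_+^*(F \ast_m G)=F \ast_{m+1} d_+^* G$), the verification reduces to inputs of the form $y_k^i$, on which both $d_+^* d_-$ and $d_- d_+^*$ can be evaluated in closed form via the generating-function definition of $B_r$. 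Their difference arises from the plethystic substitution $X \mapsto X+(q-1)ty_1$ that $\gamma$ produces, and can be evaluated using $\text{Exp}[-z(q-1)ty_1]-1$ exactly as in the computation of $f_{i,r}$ earlier in the paper.

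Second, apply the rearrangement of $T_i y_{i+1} T_i = q y_i$ in inverse form, namely $T_{k-1}^{-1} y_{k-1} T_{k-1}^{-1} = q^{-1} y_k$. When transported through $M$, this gives $T_{k-1}^{-1} M T_{k-1}^{-1} = q^{-1} M'$ for an analogous multiplication operator $M'$ built from $y_{k-1}$ in place of $y_k$. Third, invoke Lemma \ref{lem:addingback}: $d_-$ commutes with each $T_i^{-1}$ for $i \le k-2$ and with multiplication by every $y_i$. Chaining these facts,
\[
d_-(d_+^* d_- - d_- d_+^*)T_{k-1}^{-1} = (q-1)\, d_- T_1^{-1}\cdots T_{k-1}^{-1} M T_{k-1}^{-1} = q^{-1}(q-1)\, d_- T_1^{-1}\cdots T_{k-2}^{-1} M',
\]
after which sliding $d_-$ all the way to the right yields $q^{-1}(q-1) T_1^{-1}\cdots T_{k-2}^{-1} M' d_- = q^{-1}(d_+^* d_- - d_- d_+^*)d_-$, exactly as in \eqref{eq:dminusrel}.

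The main obstacle is Step 1: pinning down the correct operator $M$. Because $\gamma$ couples the multiplication by $y_k$ with a plethystic shift by $(q-1)ty_1$, $M$ is not literally multiplication by $y_k$, and a clean formulation is needed so that the conjugation identity of Step 2 and the commutation relations of Step 3 apply verbatim. Once $M$ is identified and the commutator formula is established, Steps 2 and 3 are a routine transcription of the argument for \eqref{eq:dminusrel}, with every $T_i$ replaced by $T_i^{-1}$ and every scalar $q$ replaced by $q^{-1}$.
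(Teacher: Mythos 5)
Your plan hinges on a starred analog of Lemma \ref{lem:commutator}, namely a factorization $d_+^* d_- - d_- d_+^* = (q^{-1}-1)\,T_1^{-1}\cdots T_{k-1}^{-1}\cdot M$ with $M$ a multiplication operator, and you then propose to push $M$ past $T_{k-1}^{-1}$ and $d_-$ exactly as in \eqref{eq:dminusrel}. The difficulty you flag at the end is not a technicality but a fatal obstruction: the operator $M$ that makes Step 1 a tautology (i.e. $z_k$ itself) is \emph{not} a multiplication operator, and no clean-up of the cyclic shift $\gamma$ will make it one. Concretely, already for $k=1$ one computes directly from the definitions that
\[
(d_+^* d_- - d_- d_+^*)(1) = (q-1)\,t\,y_1,
\qquad
(d_+^* d_- - d_- d_+^*)(y_1) = (q-1)\,t\,y_1\bigl(t y_1 - p_1[X]\bigr),
\]
so the ratio of the two outputs is $t y_1 - p_1[X]$ rather than $y_1$. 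The plethystic shift $X\mapsto X+(q-1)ty_1$ introduced by $\gamma$ leaks genuine $X$-dependence into the commutator, whereas the $T_i^{-1}$'s only permute the $y$-variables. Consequently the element playing the role of $y_k$ cannot be a polynomial-multiplication operator, and your Steps 2--3 (conjugating a multiplication by $T_{k-1}^{-1}$, then commuting it past $d_-$) no longer follow from Lemma \ref{lem:addingback}, which concerns the genuine $y_i$'s of $\AA$, not the $z_i$'s of $\AA^*$.

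There is also a circularity risk worth naming: $d_- z_{k-1} = z_{k-1} d_-$ is indeed true abstractly in $\AA^*$ (it is Lemma \ref{lem:addingback} transported along the isomorphism $\AA\cong\AA^*$), but that abstract fact is only useful once we know the operators on $V_*$ satisfy the defining relations of $\AA^*$ --- which is precisely what the present proposition is being proved to establish. So the concrete commutation $d_- z_{k-1} = z_{k-1} d_-$ would have to be proved from scratch, and it is at least as hard as \eqref{eq:conjrel1} itself. The paper avoids this by a different device: it multiplies the claimed identity by $q-1 = (T_{k-1}-1)+(q-T_{k-1})$, reducing to the vanishing of $A(T_{k-1}+q)$ where $A = d_+^* d_-^2 - (q+1)d_- d_+^* d_- + q\,d_-^2 d_+^*$. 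Since the image of $T_{k-1}+q$ consists of polynomials symmetric in $y_{k-1},y_k$, and $A$ intertwines the twisted $\ast_m$-multiplications and commutes with $y_i$ for $i<k-1$, it suffices to evaluate $A$ on $y_{k-1}^a y_k^b$. That evaluation is carried out via the translation operators $\Gamma_+(\cdot)$ and ultimately reduces to the antisymmetry of $q B_{a+2}B_{b+1} - B_{a+1}B_{b+2}$ (Corollary 3.4 of \cite{haglund2012compositional}) --- the same arithmetic input as in the $\AA$-case, but reached by an entirely different structural path.
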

\begin{proof}
Rewrite it as 
$$
d_+^* d_-^2 - d_- d_+^* d_- (T_{k-1} + q) + q d_-^2 d_+^* = 0. 
$$
Multiplying both sides by $q-1=T_{k-1}-1 + q-T_{k-1}$ produces an equivalent relation, which can be reduced to 
$$
(d_+^* d_-^2 - (q+1) d_- d_+^* d_- + q d_-^2 d_+^*)(T_{k-1}+q) = 0. 
$$
Note that the image of $T_{k-1}+q$ consists of elements which are symmetric in $y_{k-1}$, $y_k$. Let 
$$
A=d_+^* d_-^2 - (q+1) d_- d_+^* d_- + q d_-^2 d_+^*. 
$$
It is enough to show that $A$ vanishes on elements of $V_k$ that are symmetric in $y_{k-1}$, $y_k$. We have (recall that $k\leq 2$) 
$$
A(F\ast G) = F \ast_1 A (G),\quad A y_i  = y_{i+1} A \quad (F\in\Sym[X],\, G\in V_k,\, i<k-1),
$$
Thus it is enough to verify vanishing of $A$ on symmetric polynomials of $y_{k-1}, y_k$. We evaluate $A$ on $y_{k-1}^a y_k^b$:
\[
A (y_{k-1}^a y_k^b) = (\Gamma_+(t(q-1)y_1) B_{a+1} B_{b+1} - (q+1) B_{a+1} \Gamma_+(t(q-1)y_1) B_{b+1} 
\]
\[
+ q B_{a+1} B_{b+1} \Gamma_+(t(q-1)y_1) ) 1,
\]
where $\Gamma_+(Z)$ is the operator $F[X]\to F[X+Z]$. For any monomial $u$ and integer $i$ we have operator identities 
$$
\Gamma_+(u) B_i = (B_i - u B_{i-1}) \Gamma_+(u),\quad 
B_i \Gamma_+(-u) = \Gamma_+(-u)(B_i - u B_{i-1}),
$$
thus we have 
\begin{align*}
\Gamma_+(t(q-1)y_1) B_{a+1} B_{b+1} &= \Gamma_+(-ty_1) (B_{a+1} - qty_1 B_a) (B_{a+1} - qty_1 B_a) \Gamma_+(qty_1), \\
B_{a+1} \Gamma_+(t(q-1)y_1) B_{b+1} &= \Gamma_+(-ty_1) (B_{a+1} - ty_1 B_a) (B_{a+1} - qty_1 B_a) \Gamma_+(qty_1),\\
B_{a+1} B_{b+1} \Gamma_+(t(q-1)y_1) &= \Gamma_+(-ty_1) (B_{a+1} - ty_1 B_a) (B_{a+1} - ty_1 B_a) \Gamma_+(qty_1). 
\end{align*}
Performing the cancellations we arrive at 
$$
A (y_{k-1}^a y_k^b) = \Gamma_+(-ty_1) (t y_1 (1-q) (B_{a} B_{b+1} - q B_{a+1} B_b)) 1. 
$$
This expression is antisymmetric in $a$, $b$ by Corollary 3.4, \cite{haglund2012compositional}. Thus \eqref{eq:conjrel1} is true. 
\end{proof}

Next we have to check that 
\begin{prop}
\[
T_1^{-1} (d_+^* d_- - d_- d_+^*) d_+^* = q^{-1} d_+^*(d_+^* d_- - d_- d_+^*). 
\]
\end{prop}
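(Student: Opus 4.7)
The identity to prove is the $*$-analog of the relation $T_1(d_+ d_- - d_- d_+)d_+ = q\, d_+(d_+ d_- - d_- d_+)$ from Lemma \ref{replemma} (under the substitutions $q \mapsto q^{-1}$, $T_i \mapsto T_i^{-1}$, $d_+ \mapsto d_+^*$), and my plan is to follow the template of the preceding proposition that proved the other conjugate relation. First I rewrite the identity in an equivalent factored form: multiplying the displayed equation by $q T_1$ on the left and using $q T_1^{-1} = T_1 + q - 1$ (from $(T_1-1)(T_1+q)=0$) together with $T_1 d_+^{*2} = d_+^{*2}$ (established in the preceding proposition), the identity becomes
\[
(T_1 + q)\, A \eq 0, \qquad A := d_+^{*2} d_- - (q+1)\, d_+^* d_- d_+^* + q\, d_- d_+^{*2}.
\]
Thus it suffices to show $A F \in \ker(T_1 + q)$ (the $(-q)$-eigenspace of $T_1$ on $V_{k+1}$) for every $F \in V_k$.

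Next I would reduce to the case of a polynomial input exactly as in the preceding proof. Using the same twisted multiplications $\ast_m$, one checks that $d_\pm^*$ intertwine them ($d_+^*(H \ast_m G) = H \ast_{m+1} d_+^* G$, $d_-(H \ast_m G) = H \ast_m d_- G$), and composing gives $A(H \ast_m G) = H \ast_{m+2} A(G)$. Because the prefactor $H[X + (q-1)(ty_1 + ty_2 + \cdots)]$ is symmetric in $y_1, y_2$, multiplication by it commutes with $T_1 + q$ on the output, so it is enough to verify $(T_1 + q) A(G) = 0$ on polynomials in $y_1, \ldots, y_k$, and by commuting $A$ with the appropriate $T_i$'s (using $T_i d_- = d_- T_i$ and $d_+^* T_i^{-1} = T_{i+1}^{-1} d_+^*$ from the preceding proposition) this further reduces to monomials of the form $G = y_{k-1}^a y_k^b$.

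Finally I would evaluate $A(y_{k-1}^a y_k^b)$ explicitly using the formula $d_+^* F = \gamma F[X + (q-1) y_{k+1}]$ and the operator identities $\Gamma_+(Z) B_i = (B_i - Z B_{i-1})\Gamma_+(Z)$ and $B_i \Gamma_+(-Z) = \Gamma_+(-Z)(B_i - Z B_{i-1})$, exactly as in the preceding proposition's computation. After the cancellations, one should arrive at an expression proportional to $\Gamma_+(-t y_1) \cdot (\text{something in } y_1, y_2) \cdot (B_a B_{b+1} - q B_{a+1} B_b)(1)$, and Corollary 3.4 of \cite{haglund2012compositional} gives the antisymmetry of $B_a B_{b+1} - q B_{a+1} B_b$ in $a, b$. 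Combined with the explicit $y_1, y_2$-structure, this yields the desired eigenvector property for $T_1$. The main obstacle is keeping the $\gamma$-twist bookkeeping straight: the role played by $y_{k-1}, y_k$ in the first conjugate relation is here played by the pair of variables $y_1, y_2$ on the output side (because the factor $(T_1 + q)$ now acts on the left rather than the right), and the extra factors of $t$ coming from $\gamma\colon y_{k+1} \mapsto t y_1$ propagate through both copies of $d_+^*$. Verifying that the $t$-powers and the signs combine to put $A F$ precisely into $\ker(T_1 + q)$, rather than into some other combination, is the computational heart of the argument.
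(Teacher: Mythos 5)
Your opening move is sound: multiplying by $qT_1$ and using $T_1 d_+^{*2}=d_+^{*2}$ is exactly what the paper does, and your reformulation $(T_1+q)A=0$ with $A=d_+^{*2}d_- - (q+1)d_+^*d_-d_+^* + qd_-d_+^{*2}$ is equivalent to the paper's form $d_+^{*2}d_- - (T_1+q)d_+^*d_-d_+^* + qd_-d_+^{*2}=0$ (the two differ by a factor of $1+q$ after commuting $T_1$ past $d_+^{*2}$ and $d_-$). The reduction via the twisted multiplications $\ast_m$ and the $y_i$-commutations is also the right template.

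However, your predicted endgame imports the wrong structure from the preceding proposition, and that is where the proposal fails. In the $d_-$-side relation, $A$ contains $d_-^2$, so it consumes both $y_{k-1}$ and $y_k$ and produces a product $B_{a+1}B_{b+1}$; that is why the reduction there is to $y_{k-1}^a y_k^b$ and why the antisymmetry of $B_aB_{b+1}-qB_{a+1}B_b$ (Corollary 3.4 of Haglund--Morse--Zabrocki) is the crucial input. Here $A$ contains only a single $d_-$ in each term, so only $y_k$ is consumed. If you do evaluate on $y_{k-1}^a y_k^b$, the $y_{k-1}^a$ simply passes through both copies of $d_+^*$ as an inert prefactor $y_{k+1}^a$ which commutes with $T_1$; no $B_a$ ever appears, there is no expression $B_aB_{b+1}-qB_{a+1}B_b$, and Corollary 3.4 is not used. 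The paper correspondingly reduces only to $y_k^a$, obtaining
\[
-h_{a+1}[-X-t(q-1)(y_1+y_2)] + (T_1+q)\,h_{a+1}[-X-t(q-1)y_1] - q\,h_{a+1}[-X],
\]
and the vanishing boils down to the elementary identity $(T_1+q)\,h_b[t(1-q)y_1]=h_b[t(1-q)(y_1+y_2)]$ for $b>0$, checked by a one-line expansion of $\Delta^*_{y_1y_2}$ on $y_1^b$. So the $\Gamma_+$-conjugation machinery and the $B$-operator antisymmetry are not the ``computational heart'' of this proposition; the heart is this much simpler $h$-polynomial computation in two $y$-variables on the output. Your intuition that the pair $(y_1,y_2)$ on the output replaces $(y_{k-1},y_k)$ is partly right for locating where $T_1$ acts, but it should not be used to dictate the shape of the input reduction or the final algebraic identity.
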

\begin{proof}
Multiplying both sides by $q T_1$ and using the easier relations, rewrite it as
$$d_+^{*2} d_- - (T_1+q) d_+^* d_- d_+^* + q d_- d_+^{*2} = 0.$$
Again, because of the commutation relations with the twisted multiplication by symmetric functions and $y_i$, it is enough to evaluate the left hand side on $y_k^a$ for all $a\in\Z_{\geq0}$. We obtain 
$$
- h_{a+1}[-X - t(q-1)(y_1+y_2)] + (T_1+q) h_{a+1}[-X-t(q-1)y_1] - q h_{a+1}[-X]. 
$$

We use the identity $h_n[X+Y]=\sum_{i+j=n} h_i[X] h_j[Y]$ to write the left hand side as a linear combination of terms $h_{a+1-b}[-X]$ with $b>0$. The coefficient in front of each term with $b>0$ is 
$$
- h_{b}[t(1-q)(y_1+y_2)] + (T_1+q) h_{b}[t(1-q)y_1]. 
$$ 
By a direct computation:
\[(T_1+q) h_{b}[t(1-q)y_1] = (T_1+q) (1-q) t^b y_1^b =\]
\[(1-q) t^b  (y_1^b + (1-q)\sum_{i=1}^{b-1} y_1^i y_2^{b-i} + y_2^b) = h_{b}[t(1-q)(y_1+y_2)],\]
and we are done. 
\end{proof}

At this point, we have established the fact that the operators given by \eqref{eq:dplusstar} define an action of $\AA^*$ on $V_*$.  Also we have established the second relation in \eqref{addreleq}. The last relation is obvious. The first and the third are verified below:

\begin{prop}
$$
z_1 d_+ = -y_1 d_+^* t q^{k+1},\qquad z_{i+1} d_+ = d_+ z_i.
$$
\end{prop}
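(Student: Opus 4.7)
Both identities are operator identities on $V_\ast$; I would prove the first by direct computation and derive the second by induction using the first.

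For $z_1 d_+ = -y_1 d_+^* t q^{k+1}$, both sides are operators $V_{k-1}\to V_k$, and the approach parallels the preceding two propositions of this section. The intertwining relations $d_+(F\ast_0 G)=F\ast_0 d_+ G$, $d_+^*(F\ast_0 G)=F\ast_1 d_+^* G$, $d_-(F\ast_m G)=F\ast_m d_- G$, and $T_i^{-1}(F\ast_0 G)=F\ast_0 T_i^{-1} G$ (since $\ast_0$ is symmetric in all $y$'s), together with the fact that the multiplication operator $y_1$ commutes with every $F\ast_m$, imply that $z_1 d_+$ and $y_1 d_+^*$ both transform $F\ast_0$ on $V_{k-1}$ into $F\ast_1$ on $V_k$ in the same way (the intertwining of $z_1$ follows from its formula as a composition of $d_+^*, d_-, T_i^{-1}$). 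It therefore suffices to check the identity on test elements $F\in V_{k-1}$ which are monomials in $y_1,\ldots,y_{k-1}$ with trivial $\Sym[X]$-dependence. In this setting the $X$-substitutions in $d_+$ and $d_+^*$ act trivially, so the identity becomes a finite combinatorial statement in the $T_i$'s, $\gamma$, $y_j$'s, and (via $d_-$) the $B_r$ operators, which after explicit calculation reduces to an antisymmetry relation for $B$-operators of the type established in Corollary 3.4 of \cite{haglund2012compositional}.

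For $z_{i+1} d_+ = d_+ z_i$, applying the antilinear isomorphism $\AA\to\AA^*$ to the relation $y_{i+1}=q T_i^{-1} y_i T_i^{-1}$ (from Lemma \ref{lem:commutator}) yields $z_{i+1}=q^{-1} T_i z_i T_i$. For $i\ge 2$ the braid commutation $T_i d_+ = d_+ T_{i-1}$ is available (rearranging $d_+ T_{i-1}=T_i d_+$), and induction gives
\[
z_{i+1} d_+ = q^{-1} T_i z_i T_i d_+ = q^{-1} T_i z_i d_+ T_{i-1} = q^{-1} T_i d_+ z_{i-1} T_{i-1} = q^{-1} d_+ T_{i-1} z_{i-1} T_{i-1} = d_+ z_i.
\]
The base case $i=1$ requires the first identity, since $T_0$ does not exist: I would use the quadratic relation $T_1 = (1-q) + q T_1^{-1}$ (from $(T_1-1)(T_1+q)=0$) to split $T_1 d_+ = (1-q)d_+ + q T_1^{-1} d_+$, expand $z_2 d_+ = q^{-1} T_1 z_1 T_1 d_+$ accordingly, substitute via $z_1 d_+ = -tq^{k} y_1 d_+^*$ on $V_{k-2}\to V_{k-1}$, and simplify using $T_1 y_1 = y_2 T_1 - (q-1) y_1$ together with the explicit formula for $z_1$ on $V_{k-2}$. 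The resulting expression then matches $d_+ z_1$ after cancellation.

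The main obstacle is the first identity: while the twisted-multiplication reduction to pure $y$-monomials is routine, verifying the resulting combinatorial statement requires careful invocation of the $B$-operator antisymmetry. The base case $i=1$ of the second identity is also delicate, as it must combine the first identity with the quadratic relation on $T_1$ to close the computation.
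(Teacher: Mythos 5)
Your overall strategy shares the right flavor with the paper's proof, but you've missed the crucial observation that makes the first identity trivial, and your route for the base case of the second identity is unlikely to close.

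For $z_1 d_+ = -t q^{k+1} y_1 d_+^*$: you correctly note that both sides intertwine the twisted actions $\ast_0 \mapsto \ast_1$, which reduces the check to pure $y$-monomials. But the paper additionally observes that both $z_1 d_+$ and $y_1 d_+^*$ intertwine multiplication by $y_i$ in the same covariant way, namely $z_1 d_+\, y_i = y_{i+1}\, z_1 d_+$ (evident from the formula $z_1 d_+ = \frac{q^{k}}{q^{-1}-1}(d_+^* d_- - d_- d_+^*) T_{k}^{-1}\cdots T_1^{-1} d_+$ and the already-established commutation relations). Since $V_k$ is generated from $1$ by the twisted $\Sym[X]$-action together with multiplication by the $y_i$, this reduces the verification all the way down to $1 \in V_k$, where both sides equal $-t q^{k+1} y_1$ by a one-line computation. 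No $B$-operator antisymmetry enters; your prediction that Corollary~3.4 of \cite{haglund2012compositional} would be needed applies to the two \emph{preceding} propositions of this section, not this one. Your route on $y$-monomials would involve $d_-$ and hence $B$-operators and is substantially messier than needed.

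For $z_{i+1} d_+ = d_+ z_i$: your induction for $i\geq 2$ via $z_{i+1}=q^{-1}T_i z_i T_i$ and the braid relation $T_i d_+ = d_+ T_{i-1}$ is exactly the paper's ``deduce the general case by applying $T$-operators,'' and that part is fine. But your proposed derivation of the $i=1$ base case from the first identity does not close as written. After splitting $T_1 = (1-q) + qT_1^{-1}$ you get $z_2 d_+ = q^{-1}(1-q)\,T_1 z_1 d_+ + T_1 z_1 T_1^{-1} d_+$; the first term can use the first identity, but the second still has $T_1^{-1}$ sitting between $z_1$ and $d_+$ and no relation lets you peel it off (there is no $T_0$, and $T_1^{-1} d_+$ does not simplify on a single $d_+$). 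You'd be pushed into an unbounded regress using $T_1 z_1 T_1^{-1} = q z_2 T_1^{-2}$ and the quadratic relation again. The paper instead establishes $z_2 d_+ = d_+ z_1$ by the same technique as the first identity: rewrite it as $T_1^{-1}d_+(d_+^* d_- - d_- d_+^*) = (d_+^* d_- - d_- d_+^*)d_+$, observe that both sides intertwine the $\ast$-actions and the $y_i$ in the same way, reduce to $y_k^a$, and compare the two sides via an explicit Hall-Littlewood-type computation. The second identity is proved independently, not as a consequence of the first.
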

\begin{proof}
Using \eqref{eq:twisted mult n} we see that the operator $y_1 d_+^*$ satisfies the following two properties:
\[
y_1 d_+^* y_i = y_{i+1} y_1 d_+^*,\qquad y_1 d_+^* (F\ast G) = F \ast_1 y_1 d_+^*(G)
\]
for $F\in \Sym[X]$, $G\in V_k$, $i=1,\ldots,k$.
By definition (on $V_{k}$)
$$
z_1 = \frac{q^{k-1}}{q^{-1}-1} (d_+^* d_- - d_- d_+^*) T_{k-1}^{-1} \cdots T_1^{-1},
$$
thus (again, on $V_k$)
$$
z_1 d_+ = \frac{q^{k}}{q^{-1}-1} (d_+^* d_- - d_- d_+^*) T_{k}^{-1} \cdots T_1^{-1} d_+.
$$
From this expression we see, using \eqref{eq:twisted mult n} once again, that $z_1 d_+$ satisfies the same two properties as the operator $y_1 d_+^*$:
$$
z_1 d_+ y_i = y_{i+1} z_1 d_+,\qquad z_1 d_+ (F\ast G) = F \ast_1 z_1 d_+(G)
$$
for $F\in \Sym[X]$, $G\in V_k$, $i=1,\ldots,k$. Thus it is enough to verify the first identity on $1\in V_k$. The right hand side is $-t q^{k+1} y_1$. The left hand side is
$$
\frac{q^k}{q^{-1}-1}(d_+^*-1)d_-(1) = \frac{q^k}{q^{-1}-1}(X+t (q-1)y_1-X) = -t q^{k+1} y_1,
$$
so the first identity holds.

It is enough to verify the second identity for $i=1$ because the general case can be deduced follows from this one by applying the $T$-operators. For the identity $z_2 d_+ = d_+ z_1$, expressing $z_1$, $z_2$ in terms of $d_-$, $d_+^*$ and the $T$-operators, we arrive at the following equivalent identity:
$$
T_1^{-1} d_+(d_+^* d_- - d_- d_+^*) = (d_+^* d_- - d_- d_+^*) d_+.
$$
If we denote by $A$ either of the two sides, we have
$$
A (F\ast G) = F\ast_1 A(G),\qquad A y_i = T_2 T_3\cdots T_{i+1} y_{i+1} (T_2 T_3 \cdots T_{i+1})^{-1} A
$$
for $F\in \Sym[X]$, $G\in V_k$, $i=1,\ldots,k-1$. Thus it is enough to verify the identity on $y_k^a\in V_k$ ($a\in\Z_{\geq0}$). Applying $T_k^{-1} T_{k-1}^{-1} \cdots T_2^{-1}$ to both sides, the identity to be verified is
$$
T_k^{-1} T_{k-1}^{-1} \cdots T_1^{-1} d_+ (d_+^* d_- - d_- d_+^*)(y_k^a)
=
(d_+^* d_- - d_- d_+^*) T_{k-1}^{-1} \cdots T_1^{-1} d_+(y_k^a). 
$$
The left hand side is evaluated to 
$$
-h_{a+1}[-X-t(q-1)y_1-(q-1)y_{k+1}] + h_{a+1}[-X-(q-1)y_{k+1}].
$$
The right hand side is evaluated to 
$$
(d_+^* d_- - d_- d_+^*) T_k(y_{k+1}^a) = F[X+t(q-1)y_1] - F[X]
$$
with
\[
F[X] = -h_{a+1}[-X] - (1-q)\sum_{i=0}^{a-1} y_{k+1}^{a-i} h_{i+1}[-X]
\]
\[
= -h_{a+1}[-X+(1-q)y_{k+1}] + (1-q)y_{k+1}^{a+1},
\]
and the identity follows.
\end{proof}
This completes our proof of Theorem \ref{Astarthm}.

We also have the following Proposition, which we will use to connect the conjugate
action with $N_{\alpha}$.
\begin{prop}\label{prop:recy}
For a composition $\alpha$ of length $k$ let 
$$
y_\alpha = y_1^{\alpha_1-1}\cdots y_k^{\alpha_k-1} \in V_k. 
$$
Then the following recursions hold:
$$
y_{1\alpha} = d_+^* y_\alpha,\quad y_{a \alpha} = \frac{t^{1-a}}{q-1}(d_+^* d_- - d_- d_+^*) \sum_{\beta\models a-1} q^{1-l(\beta)} d_-^{l(\beta)-1}(y_{\alpha\beta})\quad(a>1). 
$$
\end{prop}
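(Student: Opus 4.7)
The first identity $y_{1\alpha} = d_+^* y_\alpha$ is immediate from Theorem~\ref{Astarthm}: since $d_+^*(1) = 1$ (apply the definition to $F = 1$, noting $\gamma$ fixes constants) and $d_+^* y_i = y_{i+1} d_+^*$, iterating gives $d_+^*(y_1^{\alpha_1-1}\cdots y_k^{\alpha_k-1}) = y_2^{\alpha_1-1}\cdots y_{k+1}^{\alpha_k-1} = y_{1\alpha}$.

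For the main recursion, the plan is to factor out $y_\alpha$ from both sides, reducing to a local identity on $V_{k+1}$. Using $d_- y_i = y_i d_-$ (Lemma~\ref{lem:addingback}) and $d_+^* y_i = y_{i+1} d_+^*$ (Theorem~\ref{Astarthm}), a short calculation shows that $(d_+^* d_- - d_- d_+^*)(y_\alpha F) = y_{1\alpha} (d_+^* d_- - d_- d_+^*)(F)$ whenever $F$ is independent of $y_1, \ldots, y_k$. Moreover, iterating $d_-(y_m^i F) = -B_{i+1} F$ on the monomial $y_{\alpha\beta}$ yields the factorization
$$d_-^{l(\beta)-1}(y_{\alpha\beta}) = y_\alpha \cdot (-1)^{l(\beta)-1} y_{k+1}^{\beta_1-1} B_{\beta_2}\cdots B_{\beta_{l(\beta)}}(1).$$
Combining these with $y_{a\alpha} = y_1^{a-1} y_{1\alpha}$, the recursion reduces to the identity
$$y_1^{a-1} = \frac{t^{1-a}}{q-1}(d_+^* d_- - d_- d_+^*)(\tilde Q_{a-1})$$
in $V_{k+1}$, where $\tilde Q_n \in V_{k+1}$ depends only on $y_{k+1}$ and $X$. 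Using $B_r(1) = (-1)^r e_r[X]$ and simplifying the sum over $\beta \models n$, one finds the compact expression $\tilde Q_n = \sum_{i=0}^{n-1} q^{-i} y_{k+1}^{n-1-i} h_i[X]$, with generating function $\sum_{n \geq 1} \tilde Q_n u^{n-1} = \pExp[uX/q]/(1-uy_{k+1})$.

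The reduced identity will be proved by evaluating $(d_+^* d_- - d_- d_+^*)(\tilde Q_{a-1})$ directly. Using the explicit formulas for $d_+^*$ (which substitutes $X \mapsto X+(q-1)y_{k+1}$ and applies the twist $\gamma: y_{k+1} \mapsto ty_1$) and $d_-$ (whose action on powers of $y_{k+1}$ produces $B$-operators on the $\Sym[X]$ factor), one expands both terms in the commutator and verifies the desired cancellation. The main obstacle lies in this final plethystic identity: almost every term must cancel, leaving only the pure $y_1^{a-1}$ contribution. This cancellation is a structural consequence of the antisymmetry of $q B_{a+2} B_{b+1} - B_{a+1} B_{b+2}$ in $a, b$ (Corollary 3.4 of \cite{haglund2012compositional}), which is the same identity that underlies the analogous cancellations established earlier for the $\AA^*$ relations in Theorem~\ref{Astarthm}. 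Alternatively, one may proceed by induction on $a$, where the inductive step uses the three-term recursion $\tilde Q_{n+1} = y_{k+1} \tilde Q_n + q^{-n} h_n[X]$ readable from the generating function, together with the base case $a = 2$ verified by direct computation.
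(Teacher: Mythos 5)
Your reduction is essentially the paper's: you peel off $y_\alpha$ using the intertwining relations $(d_+^*d_- - d_-d_+^*)y_i = y_{i+1}(d_+^*d_- - d_-d_+^*)$ and $d_-y_i = y_id_-$, push the inner $d_-^{l(\beta)-1}$ through $y_\alpha$ to produce the $B$-operators, and reindex the sum over $\beta \models a-1$ by $\beta_1$ to arrive (via \eqref{eq:hnBn}, which you invoke somewhat obliquely as ``$B_r(1)=(-1)^re_r[X]$ and simplifying'') at
\[
\tilde Q_{a-1}=\sum_{i=0}^{a-2}q^{-i}y_{k+1}^{a-2-i}h_i[X]=\sum_{b}y_{k+1}^bh_{a-2-b}[q^{-1}X].
\]
All of that is correct and matches the paper. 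The first identity $y_{1\alpha}=d_+^*y_\alpha$ is also fine.

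The gap is in the final step, which is the real content of the proposition, and the mechanism you cite is the wrong one. You attribute the cancellation to ``the antisymmetry of $qB_{a+2}B_{b+1}-B_{a+1}B_{b+2}$ in $a,b$ (Corollary~3.4 of \cite{haglund2012compositional}).'' That identity is what drives the proofs of the relations $d_-^2T_{k-1}=d_-^2$ and \eqref{eq:conjrel1}, where one has a symmetric input being killed by an antisymmetric expression. Here the situation is different: after conjugating through $\Gamma_+(-ty_1)$ and $\Gamma_+(qty_1)$ with the operator identities $\Gamma_+(u)B_i=(B_i-uB_{i-1})\Gamma_+(u)$ and $B_i\Gamma_+(-u)=\Gamma_+(-u)(B_i-uB_{i-1})$, the commutator collapses to $(q-1)ty_1\,\Gamma_+(-ty_1)\sum_{b=0}^{a-1}B_b(h_{a-b-1}[q^{-1}X+ty_1])$, and one must show that, for $m>0$,
\[
\sum_{b=0}^{m}B_b\bigl(h_{m-b}[q^{-1}X]\bigr)=0.
\]
Re-expanding via \eqref{eq:hnBn} turns this into $(B_0-q)h_m[q^{-1}X]$, and its vanishing follows from $h_m[q^{-1}X]=-q^{-1}C_m(1)$ (the $\bar\omega$-image of Proposition~5.2 of \cite{haglund2012compositional}) together with $B_0C_m=qC_mB_0$ (Proposition~3.5 of \cite{haglund2012compositional}) and $B_0(1)=1$ --- not Corollary~3.4. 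No symmetric/antisymmetric pairing appears; the vanishing is a telescoping that comes from the quasi-commutation of $B_0$ with $C_m$. Your proposed induction on $a$ via $\tilde Q_{n+1}=y_{k+1}\tilde Q_n+q^{-n}h_n[X]$ also isn't carried out, and it is not obviously simpler: $(d_+^*d_--d_-d_+^*)$ does not commute cleanly with multiplication by $y_{k+1}$ because $d_+^*$ sends $y_{k+1}$ to $ty_1$, so the inductive step would require exactly the kind of $\Gamma_+$ bookkeeping you are trying to avoid. As it stands, the proof is incomplete at precisely the point where the work is.
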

\begin{proof}
The first identity easily follows from the explicit formula for $d_+^*$. For $i=1,2,\ldots,k-1$ we have 
$$
(d_- d_+^* - d_+^* d_-) y_i = y_{i+1} (d_- d_+^* - d_+^* d_-). 
$$
Therefore it is enough to verify the following identity for any $a\in\Z_{\geq1}$:
\begin{equation}\label{eq:y1a}
(q-1) t^a y_1^a = (d_+^* d_- - d_- d_+^*) \sum_{\beta\models a}
q^{1-l(\beta)} d_-^{l(\beta)-1}(y_k^{\beta_1-1} \cdots y_{k+l(\beta)-1}^{\beta_{l(\beta)}-1}) \in V_k. 
\end{equation}
We group the terms on the right hand side by $b=\beta_1-1$ and the sum becomes 
\[
\sum_{b=0}^{a-1} y_k^b \sum_{\beta\models a-b-1} q^{-l(\beta)} d_-^{l(\beta)}\left(y_{k+1}^{\beta_1-1}\cdots y_{k+l(\beta)}^{\beta_{l(\beta)}-1}\right) 
\]
\[
=\sum_{b=0}^{a-1} y_k^b \sum_{\beta\models a-b-1} q^{-l(\beta)} (-1)^{l(\beta)} B_{\beta_1} \cdots B_{\beta_{l(\beta)}}(1) 
=\sum_{b=0}^{a-1} y_k^b h_{a-b-1}[q^{-1}X]. 
\]
We have used the identity 
\begin{equation}\label{eq:hnBn}
h_n[q^{-1}X] = \sum_{\alpha\models n} q^{-l(\alpha)} (-1)^{l(\alpha)} B_\alpha(1),
\end{equation}
which can be obtained by applying $\bar\omega$ to Proposition 5.2 of \cite{haglund2012compositional}:
$$
h_n[-X] = \sum_{\alpha\models n} C_\alpha(1). 
$$
Thus the right hand side of (\ref{eq:y1a}) is evaluated to the following expression:
\[
(d_+^* d_- - d_- d_+^*) \sum_{b=0}^{a-1} y_k^b q^{-(a-b-1)} h_{a-b-1}[X]
\]
\[
= -\sum_{b=0}^{a-1}\left(\Gamma_{+}(t(q-1)y_1) B_{b+1} - B_{b+1} \Gamma_{+}(t(q-1)y_1)\right) h_{a-b-1}[q^{-1} X]. 
\]
\[
= - \sum_{b=0}^{a-1} \Gamma_+(-t y_1) \left((B_{b+1} - qt y_1 B_{b}) - (B_{b+1} - t y_1 B_{b})\right) (h_{a-b-1}[q^{-1}X + ty_1]) 
\]
\[
= (q-1) t y_1 \Gamma_+(-t y_1) \sum_{b=0}^{a-1} B_{b} ( h_{a-b-1}[q^{-1}X+t y_1]). 
\]
Thus we need to prove 
$$
\sum_{b=0}^{a-1} B_b (h_{a-b-1}[q^{-1} X+ty_1]) = t^{a-1} y_1^{a-1}. 
$$
Then the left hand side as a polynomial in $y_1$ indeed has the right coefficient of $y_1^{a-1}$. The coefficient of $y_1^i$ for $i<a-1$ is 
$$
t^i \sum_{b=0}^{a-1-i} B_b(h_{a-b-1-i}[q^{-1} X]). 
$$
So it is enough to show:
$$
\sum_{b=0}^m B_b(h_{m-b}[q^{-1}X]) = 0 \quad (m\in\Z_{>0}). 
$$
Using (\ref{eq:hnBn}) again we see that the left hand side equals 
$$
B_0(h_m[q^{-1}X]) - q h_m[q^{-1}X] = (B_0-q)(-q^{-1} C_m(1)) = 0 
$$
because $B_0 C_m = q C_m B_0$ by Proposition 3.5 of \cite{haglund2012compositional} and $B_0(1)=1$. 
\end{proof}

\section{The Involution}

\begin{defn}
Consider $\AA$ and $\AA^*$ as algebras over $\Q(q,t)$, and let 
$\tilde{\AA}=\tilde{\AA}_{q,t}$ be the quotient of the free product of 
$\AA$ and $\AA^*$ by the relations 
\[d_-^* = d_-,\quad T_i^*=T_i^{-1},\quad e_i^*=e_i,\]
\[z_{i+1} d_+ = d_+ z_i,\quad y_{i+1} d_+^* = d_+^* y_i,\quad z_1 d_+ = -y_1 d_+^* t q^{k+1},\]
\end{defn}
\begin{rem}
For any $k\geq 0$ the \emph{affine Hecke algebra} $\mathrm{AHA}_k$ is the algebra generated over $\Q(q)$ by $T_1,\ldots,T_{k-1}, y_1^{\pm1}, \ldots, y_k^{\pm1}$ modulo relations
\[
(T_i-1)(T_i+q)=0,\quad T_i T_{i+1} T_i = T_{i+1} T_i T_{i+1},
\quad T_i T_j = T_j T_i\quad(|i-j|>1),
\]
\[
y_i T_j = T_j y_i\quad (i\notin\{j,j+1\}),
\quad y_i y_j = y_j y_i,
\quad
T_i y_{i+1} T_i = q y_i.
\]
The \emph{positive part} $\mathrm{AHA}_k^+$ is defined as the subalgebra of $\mathrm{AHA}_k$ generated by $T_i$ and $y_i$, or equivalently as the algebra generated over $\Q(q)$ by $T_1,\ldots,T_{k-1}, y_1, \ldots, y_k$ modulo the same relations. We have a natural homomorphism $\mathrm{AHA}_k^+\to e_k\AA e_k$ which can be shown to be injective using Lemma \ref{lem:algbasis}. It is tempting to guess that in a similar way the subalgebra of $e_k\tilde\AA e_k$ generated by $T_i$, $y_i$ and $z_i$ is isomorphic to the positive part $\mathrm{DAHA}_k^{++}$ of the \emph{double affine Hecke algebra} $\mathrm{DAHA}_k$. To fix a version of $\mathrm{DAHA}_k^{++}$ which is close to our notations we start with relations (2.1-2.7) in \cite{shiff2011elliptic} and perform substitutions  $q=t^{-1}$, $v=q^{\frac12}$, $T_i=q^{\frac12} T_i^{-1}$, $X_i=y_i$, $Y_i=z_i$ followed by reversal of the order of generators in each monomial. So $\mathrm{DAHA}_k^{++}$ is defined over $\Q(q,t)$ by generators $T_1,\ldots,T_{k-1}, y_1, \ldots, y_k, z_1, \ldots, z_k$ and relations of two copies of $\mathrm{AHA}_k$ (the second copy is transformed by $T_i\to T_i^{-1}$, $q\to q^{-1}$)
\[
(T_i-1)(T_i+q)=0,\quad T_i T_{i+1} T_i = T_{i+1} T_i T_{i+1},
\quad T_i T_j = T_j T_i\quad(|i-j|>1),
\]
\[
y_i T_j = T_j y_i\quad (i\notin\{j,j+1\}),
\quad y_i y_j = y_j y_i,
\quad
T_i y_{i+1} T_i = q y_i,
\]
\[
z_i T_j = T_j z_i\quad (i\notin\{j,j+1\}),
\quad z_i z_j = z_j z_i,
\quad
T_i z_{i} T_i = q z_{i+1},
\]
and two extra relations. The first one is
\[
z_2 y_1 = q y_1 T_1^{-2} z_2\quad\Leftrightarrow\quad q y_2 z_1 = z_1 T_1^2 y_2,
\]
which \emph{can} be deduced in $\tilde{A}$ from \eqref{addyeq} and $z_2 d_+ = d_+ z_1$. The second relation is
\[
z_1 y_1 \cdots y_k = t y_1 \cdots y_k z_1.
\]
The following identity can be deduced from the rest of the relations:
\[
y_2 \cdots y_k z_1 = q^{1-k} z_1 T_1 \cdots T_{k-1} T_{k-1} \cdots T_1 y_2\cdots y_k.
\]
Thus we expect to have
\[
z_1 y_1 = q^{1-k}\, t\, y_1 z_1 T_1 \cdots T_{k-1} T_{k-1} \cdots T_1.
\]
However this \emph{does not} hold in $\tilde{A}$. Instead we have
\[
z_1 y_1 = q^{1-k}\, t\, y_1 z_1 T_1 \cdots T_{k-1} T_{k-1} \cdots T_1 \;+\; q t  y_1  d_- d_+^* T_{k-1} \cdots T_1.
\]
So we see that we \emph{do not} obtain a natural homomorphism $\mathrm{DAHA}_k^{++}\to e_k \tilde\AA e_k$. One way to repair the situation is to introduce the ``partially symmetrized'' $\mathrm{SDAHA}_{k,\infty}^{++}$ by starting with the DAHA in infinitely many generators $T_i$, $z_i$, $y_i$ ($i=1,2,3,\ldots$), and then symmetrizing in generators with $i>k$. For instance, for $k=0$ we expect $e_0 \tilde \AA e_0$ to coincide with the positive part of the elliptic Hall algebra, which is the stable limit of spherical DAHAs as shown in \cite{shiff2011elliptic}. Details of this construction will be provided in a future publication.
\end{rem}
We now prove 
\begin{thm}
\label{Atildethm}
The operations $T_i$, $d_-$, $d_+$, $d_+^*$, $e_i$ define an action of $\tilde{\AA}$ on 
$V_*$. Furthermore, the kernel of the natural map $\tilde{\AA} e_0 \rightarrow V_*$
that sends $f e_0$ to $f(1)$ is given by $I e_0$ where $I \subset 
\tilde{\AA}$ is the ideal generated by 
\begin{equation}
\label{defI}
I=\langle d_+^* d_+^m-d_+^{m+1}|\quad m \geq 0\rangle. 
\end{equation}
In particular, we have an isomorphism $V_* \cong \tilde{\AA} e_0/Ie_0$. 
\end{thm}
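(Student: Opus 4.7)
The first assertion — that the displayed operators assemble into a $\tilde{\AA}$-action on $V_*$ — is immediate from what has been established. Lemma \ref{replemma} supplies the $\AA$-action, Theorem \ref{Astarthm} supplies a compatible $\AA^*$-action via formula \eqref{eq:dplusstar} and verifies the cross-relations \eqref{addreleq}; together with the identifications $d_-^*=d_-$, $T_i^*=T_i^{-1}$, $e_i^*=e_i$ these are precisely the defining relations of $\tilde{\AA}$. The inclusion $Ie_0\subseteq\ker(\tilde{\AA}e_0\to V_*)$ is the final identity of Theorem \ref{Astarthm}, namely $d_+^* d_+^m(1)=d_+^{m+1}(1)$.

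Using the isomorphism $\AA e_0\xrightarrow{\sim} V_*$ of Theorem \ref{lem:mainlemma}, the remaining content reduces to the identity $\tilde{\AA}e_0 = \AA e_0 + Ie_0$ of subspaces of $\tilde{\AA}e_0$. Equivalently, I plan to show that the subspace $\AA e_0 + Ie_0$ is stable under left multiplication by $d_+^*$: since $e_0\in\AA e_0$ and $\tilde{\AA}e_0$ is cyclic, this stability together with the $\AA$-stability inherited from Theorem \ref{lem:mainlemma} forces $\AA e_0 + Ie_0=\tilde{\AA}e_0$.

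Given a basis element $\eta = d_-^m y_1^{a_1}\cdots y_{k+m}^{a_{k+m}} d_+^{k+m} e_0$ from Lemma \ref{lem:algbasis}, the plan is to rewrite $d_+^*\eta$ by pushing $d_+^*$ rightward. The relevant moves are: the commutator with $d_-$ provided by the $\AA^*$-analogue of Lemma \ref{lem:commutator},
\[
d_+^* d_- - d_- d_+^* \;=\; (q^{-1}-1)\,T_1^{-1}\cdots T_{k-1}^{-1}\, z_k;
\]
the commutation $d_+^* y_i = y_{i+1}d_+^*$; and finally the defining congruence $d_+^* d_+^{k+m}e_0\equiv d_+^{k+m+1}e_0\pmod{Ie_0}$. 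The principal term, after all three moves, is $d_-^m y_2^{a_1}\cdots y_{k+m+1}^{a_{k+m}} d_+^{k+m+1} e_0\in\AA e_0$. The correction terms produced by the first move contain factors $z_j$; these slide through the remaining $d_-$'s (since $z$ commutes with $d_-$, by the $\AA^*$-analogue of Lemma \ref{lem:addingback}) and then through the $d_+$-tail via $z_{i+1}d_+=d_+ z_i$, until one reaches the form $z_1 d_+^j e_0$. Applying the exchange relation $z_1 d_+ = -y_1 d_+^* tq^{k+1}$ and then absorbing the freshly created $d_+^*$ using the defining relation of $I$ yields $z_1 d_+^j e_0\equiv -tq^{k+1} y_1 d_+^j e_0\pmod{Ie_0}$, an element of $\AA e_0$.

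The main obstacle is the middle step: a $z$-correction created from the commutator with $d_-$ must still traverse the $y$-monomial before reaching the $d_+$-tail, and no direct $y$-$z$ commutation relation is stated in $\tilde{\AA}$. The plan to circumvent this is to sequence the reductions carefully, processing the $d_+$-tail first so that by the time a $z$-correction is produced it only has to be propagated through $d_-$'s and $d_+$'s (using the explicitly stated cross-relations) before being converted into $\AA$-elements modulo $Ie_0$. Provided this reordering is carried out systematically, each inductive step strictly decreases the number of $d_+^*$-factors modulo $Ie_0$, closing the induction and finishing the proof.
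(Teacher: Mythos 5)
Your overall skeleton matches the paper's: the first two assertions follow from Theorem \ref{Astarthm}, the remainder reduces (via the isomorphism of Theorem \ref{lem:mainlemma}) to showing $\AA e_0 + I e_0 = \tilde{\AA}e_0$, and you correctly work from the basis of Lemma \ref{lem:algbasis}. You also correctly isolate the key obstruction: a $z$-correction produced by the $\AA^*$-commutator with $d_-$ must cross the $y$-monomial, and $\tilde{\AA}$ provides no $y$--$z$ exchange relation. Unfortunately the proposed remedy --- ``sequence the reductions carefully, processing the $d_+$-tail first'' --- is not a concrete fix, and I do not see how to make it one within the ``push $d_+^*$ all the way to the right'' framework: if you leave the $y$-monomial in place, the $z$-corrections get stuck at it; if instead you first rewrite the $y$-segment as a $T,d_\pm$-word via Lemma \ref{lem:commutator}, the corrections now pass, but the main $d_+^*$ then hits a $d_+$ produced by that rewriting, and $\tilde{\AA}$ has no $d_+^* d_+$ exchange. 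Also, your termination measure --- ``each inductive step strictly decreases the number of $d_+^*$-factors'' --- is simply false: the exchange $z_1 d_+ = -tq^{k+1}y_1 d_+^*$ \emph{creates} a new $d_+^*$.

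The paper resolves both issues with a single structural change you are missing: do not unroll the push. Grade $\AA e_0$ and $\tilde{\AA}e_0/Ie_0$ by total degree in $d_-,d_+,d_+^*$ (all relations are homogeneous), and induct on that degree, claiming $d_+^* V^{(m)}\subseteq V^{(m+1)}$. Peel exactly one factor off the canonical form of $F$: if $F=d_+^r e_0$ the $I$-relation applies directly; if $F=y_i G$ then $d_+^* F = y_{i+1}(d_+^* G)$ with $d_+^* G\in\AA e_0$ already by the induction hypothesis, so no further pushing is needed; if $F=d_-G$ then $d_+^* F = d_-(d_+^* G) + (q^{-1}-1)T_1^{-1}\cdots T_{k-1}^{-1}z_k G$, with the first term again absorbed by induction. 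For the $z_k G$ term one \emph{now} re-expands $G$ --- which lies in $\AA e_0$ --- as a word in the generators $T_i,d_\pm$ (abandoning canonical form), peels its leftmost letter, and slides $z_k$ through using only $z_k d_-=d_- z_k$, $z_k d_+ = d_+ z_{k-1}$ for $k>1$, $z_1 d_+ = -tq^{k+1}y_1 d_+^*$ (which reintroduces a $d_+^*$, but applied to a strictly shorter word, hence handled by the degree induction), and the $T$--$z$ commutations. This recursion terminates by word length, and never requires moving $z$ past $y$ nor $d_+^*$ past $d_+$.
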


\begin{proof}
Theorem \ref{Astarthm} shows that we have a map of modules
$\tilde{\AA} e_0 \rightarrow V_*$, that restricts to the isomorphism 
of Theorem \ref{lem:mainlemma} on the subspace $\AA e_0$, so in
particular is surjective. Furthermore, the last relation
of \eqref{addreleq} shows that it descends to a map
$\tilde{\AA} e_0/Ie_0 \mapsto V_*$, which must still be surjective. We have the following commutative diagram:

\[
\begin{tikzcd}
\tilde{\AA} e_0/Ie_0 \arrow{r} & V_*\\
\AA e_0 \arrow{u} \arrow{ur}{\sim} & 
\end{tikzcd}
\]

Thus we have an inclusion $\AA e_0 \subset \tilde{\AA}e_0/Ie_0$ and it remains to show that
the image of $\AA e_0$ in $\tilde{\AA}e_0/Ie_0$ is the entire space.
We do so by induction: notice that both $\AA e_0$ and $\tilde{\AA}e_0/Ie_0$ have a grading by the total degree in $d_+$, $d_+^*$ and $d_-$, as all the relations are homogeneous. 
For instance, $y_i$ and $z_i$ have degree $2$, and $T_i$ has degree $0$ for all
$i$. Denote the space of elements of degree $m$ in $\AA e_0$, $\tilde{\AA}e_0/Ie_0$ by $V^{(m)}$, $W^{(m)}$ respectively. We need to prove $V^{(m)}=W^{(m)}$. The base cases $m=0$, $m=1$ are clear.

For the induction step, suppose $m>0$, $V^{(i)}=W^{(i)}$ for $i\leq m$ and let $F\in V^{(m)}$. It is enough to show that $d_+^* F \in V^{(m+1)}$.
By Lemma \ref{lem:algbasis}, we can assume that
$F$ is in the canonical form \eqref{basiseq}. 
We therefore must check three cases: $F=d_+^m (1)$ for $1\in V_0$,
$F=y_i G$ for $G\in V^{(m-2)}$, and $F=d_- (G)$ for $G\in V^{(m-1)}$. 
In the first case we have $d_+^* F = d_+^{m+1} 1$. 
In the second case we have $d_+^* (F) = y_{i+1} d_+^* (G)$. 

In the third case, we have 
\[d_+^* F = d_+^* d_- G = d_- d_+^* G + (q^{-1} - 1) T_1^{-1}\cdots T_{k-1}^{-1} z_k G.\] 
Now we use expansion of $G$ in terms of the generators $T_i$, $d_+$ and $d_-$. Because of the commutation 
relations between $T_i$ and $z_j$ it is enough to consider two cases: $G=d_+ G'$ and $G=d_- G'$ for $G'\in V^{(m-2)}$. In the first case we have $z_k G = d_+ z_{k-1} G'$ if $k>1$ and $z_1 G = -y_1 d_+^* t G'$ if $k=1$. In the second case we have $z_k G = d_- z_k G'$. In all cases the claim is reduced to the induction hypothesis.
\end{proof}

Now by looking at the defining relations of $\tilde{\AA}$, we make 
the remarkable observation that there exists an
involution $\iota$ of $\tilde{\AA}$ that permutes $\AA$ and $\AA^*$
and is antilinear with respect to
the conjugation $(q,t)\mapsto (q^{-1},t^{-1})$ on the ground field
$\Q(q,t)$! Furthermore, this involution preserves the ideal $I$,
and therefore induces an involution on $V_*$ via the isomorphism of Theorem
\ref{Atildethm}.

\begin{thm}
\label{mainthm}
There exists a unique antilinear degree-preserving automorphism $\mathcal{N} : V_*
\rightarrow V_*$ satisfying
\[\mathcal{N}(1)=1,\quad \mathcal{N} T_i = T_i^{-1} \mathcal{N},
\quad \mathcal{N} d_- = d_- \mathcal{N},\quad \mathcal{N} d_+ = d_+^*
\mathcal{N},
\quad \mathcal{N} y_i = z_i \mathcal{N}.\]
Moreover, we have
\begin{enumerate}[label=(\roman*)]
\item \label{invpart} $\mathcal{N}$ is an involution, i.e. $\mathcal{N}^2=\Id$.
\item \label{Nalphapart} For any composition $\alpha$ we have
$$
\mathcal{N}(y_\alpha) = q^{\sum(\alpha_i-1)} N_\alpha.
$$
\item \label{nabpart} On $V_0=\Sym[X]$, we have $\mathcal{N} = \nabla
  \bar\omega$, where $\bar\omega$ is the involution sending $q$, $t$, $X$ to $q^{-1}$, $t^{-1}$, $-X$ resp. (see \eqref{omega1}).
\end{enumerate}
\end{thm}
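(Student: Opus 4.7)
The plan is to descend an antilinear involution $\iota$ of $\tilde{\AA}$ to an operator $\mathcal{N}$ on $V_* \cong \tilde{\AA} e_0 / I e_0$ via Theorem \ref{Atildethm}. First I would define $\iota\colon \tilde{\AA} \to \tilde{\AA}$ by $\iota(T_i) = T_i^{-1}$, $\iota(d_\pm) = d_\pm^*$, $\iota(y_i) = z_i$, extended antilinearly ($q \leftrightarrow q^{-1}$, $t \leftrightarrow t^{-1}$). Well-definedness amounts to checking that each defining relation of $\tilde{\AA}$ is mapped to another such relation: the relations of $\AA = \AA_q$ transform to those of $\AA^* = \AA_{q^{-1}}$ under the combined substitution $T_i \mapsto T_i^{-1}$, $q \mapsto q^{-1}$ (e.g., $(T_i-1)(T_i+q) = 0$ becomes $(T_i^{-1}-1)(T_i^{-1}+q^{-1}) = 0$, equivalent after multiplication by $qT_i^2$), and the three cross-relations $z_{i+1} d_+ = d_+ z_i$, $y_{i+1} d_+^* = d_+^* y_i$, $z_1 d_+ = -tq^{k+1} y_1 d_+^*$ are either paired with each other or self-dual under $\iota$. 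Evidently $\iota^2 = \Id$.

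Next I would show $\iota(I) = I$ for the ideal $I = \langle d_+^* d_+^m - d_+^{m+1} : m \geq 0 \rangle$. The case $m = 0$ gives $d_+ - d_+^* \in I$, and for general $m$,
\[
\iota\bigl(d_+^* d_+^m - d_+^{m+1}\bigr) \;=\; d_+(d_+^*)^m - (d_+^*)^{m+1} \;=\; (d_+ - d_+^*)(d_+^*)^m \;\in\; I.
\]
Thus $\iota$ descends to an antilinear involution $\mathcal{N}\colon V_* \to V_*$ defined by $\mathcal{N}(a \cdot 1) := \iota(a) \cdot 1$. All stated intertwining relations follow tautologically from $\iota$'s action on generators (for instance $\mathcal{N}(T_i F) = \iota(T_i a)\cdot 1 = T_i^{-1} \mathcal{N}(F)$), and (i) is immediate from $\iota^2 = \Id$; degree-preservation is automatic since $\iota$ fixes each idempotent $e_k$. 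Uniqueness follows because $V_*$ is cyclic over $\tilde{\AA}$ with generator $1$.

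For (ii) I would induct on $\ell(\alpha)$. The base $\alpha = \emptyset$ is trivial, and the case $\alpha = (1,\alpha')$ reduces via $y_{1\alpha'} = d_+^* y_{\alpha'}$ and $\mathcal{N} d_+^* = d_+\mathcal{N}$ to the recursion $N_{[1,\alpha']} = d_+ N_{\alpha'}$ of Theorem \ref{thm:recN}. For $\alpha = (a,\alpha')$ with $a > 1$, applying $\mathcal{N}$ to the harder recursion of Proposition \ref{prop:recy}, antilinearity conjugates $\tfrac{t^{1-a}}{q-1}q^{1-l(\beta)}$ into $\tfrac{t^{a-1}}{q^{-1}-1}q^{l(\beta)-1}$ and swaps $d_+^* \leftrightarrow d_+$ while $d_-$ passes through. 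Combining the inductive hypothesis, the arithmetic identity $(l(\beta)-1) + (a-1-l(\beta)) = a-2$, and $-1/(q^{-1}-1) = q/(q-1)$, the accumulated $q$-factor comes out to $q^{(a-1) + \sum(\alpha'_i-1)} = q^{\sum((a\alpha')_i - 1)}$, matching the recursion for $N_{a\alpha'}$ precisely.

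The main obstacle is (iii). Combining (ii) with the identity $B_\alpha(1) = (-1)^l d_-^l(y_\alpha)$ (which follows from $d_-(y_k^i F) = -B_{i+1}F$) and $\mathcal{N} d_- = d_- \mathcal{N}$ gives $\mathcal{N}(B_\alpha(1)) = (-1)^l q^{\sum(\alpha_i - 1)} D_\alpha(q,t)$. A direct plethystic expansion establishes $\bar\omega B_r \bar\omega^{-1} = -q^{r-1} C_r$ on $\Sym[X]$, and iteration yields $\nabla \bar\omega B_\alpha(1) = (-1)^l q^{\sum(\alpha_i - 1)} \nabla C_\alpha(1)$. Since $\{B_\lambda(1)\}_\lambda$ spans $V_0$ by the Hall-Littlewood identification in Lemma \ref{lem:algbasis}, matching these on the basis would give $\mathcal{N} = \nabla\bar\omega$ on $V_0$ contingent on $D_\alpha = \nabla C_\alpha(1)$. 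To avoid circularity with the shuffle conjecture itself, I would pivot to the modified Macdonald eigenbasis $\{H_\mu\}$ on which $\nabla$ acts diagonally: using the known behavior of $\bar\omega$ on $H_\mu$ and computing $\mathcal{N}(H_\mu)$ directly from the structural description of the $\tilde{\AA}$-module $V_*$ supplied by Theorems \ref{Astarthm}--\ref{Atildethm}, verify that the two operators share the same eigenvalue on each $H_\mu$. This eigenvalue matching is the most delicate step and carries the essential content of the theorem.
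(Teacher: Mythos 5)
Your construction of $\mathcal{N}$ via the antilinear involution $\iota$ of $\tilde{\AA}$, the verification that $\iota(I) \subseteq I$, and the treatments of parts (i) and (ii) all match the paper's argument (you actually give slightly more detail than the paper on preservation of $I$, which is fine). The issue is part (iii), which you yourself flag as incomplete.

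You correctly spot that the route through $\mathcal{N}(B_\alpha(1))$ is circular, since it would need $D_\alpha = \nabla C_\alpha(1)$, which is the theorem being proved. But your proposed fix --- ``computing $\mathcal{N}(H_\mu)$ directly from the structural description of the $\tilde{\AA}$-module $V_*$'' and matching eigenvalues --- is not a step you can actually carry out from what has been established: the Macdonald polynomials $H_\mu$ have no natural description in terms of the $d_\pm$, $y_i$, $T_i$ generators, and nothing in Theorems \ref{Astarthm}--\ref{Atildethm} tells you how $\mathcal{N}$ acts on them. There is no reason to expect $\mathcal{N}$ to be diagonal, or even triangular, in that basis \emph{a priori}; showing it is amounts to the whole content you are trying to prove. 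So as written this is a genuine gap, not just a deferred computation.

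The paper sidesteps the basis question entirely by instead characterizing $\nabla$ operator-theoretically. Define $\underline{e}_1 = d_-d_+$ (multiplication by $e_1[X]$) and $D_1 = -d_-d_+^*$, $D_1^*$ the analogous plethystic operator with $(q,t)\mapsto(q^{-1},t^{-1})$, so that $\bar\omega D_1 = D_1^*\bar\omega$. The relations $\mathcal{N}d_\pm = d_\pm^{(*)}\mathcal{N}$ immediately give $\mathcal{N}D_1 = -\underline{e}_1\mathcal{N}$ and $\mathcal{N}\underline{e}_1 = -D_1\mathcal{N}$, hence $\nabla' := \mathcal{N}\bar\omega$ satisfies $\nabla'(1)=1$, $\nabla'\underline{e}_1 = D_1\nabla'$, $\nabla'D_1^* = -\underline{e}_1\nabla'$. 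By \cite{garsia1998explicit}, $\nabla$ satisfies the same three conditions and $\Sym[X]$ is generated from $1$ by $\underline{e}_1$ and $D_1^*$; therefore $\nabla' = \nabla$. That citation --- a rigidity/uniqueness statement for $\nabla$ in terms of commutation with $\underline{e}_1$ and $D_1$ --- is the key input you were missing, and it replaces any eigenvalue computation on the $H_\mu$.
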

\begin{proof}

The automorphism is induced from the involution of $\tilde{\AA}$, from which
part \ref{invpart} follows immediately. Part \ref{Nalphapart} follows
from applying $\mathcal{N}$ to the relations of Proposition \ref{prop:recy}.

Finally, let $D_1, D_1^*: V_0\to V_0$ be the operators
\[(D_1 F)[X] = F[X+(1-q)(1-t) u^{-1}] \pExp[-uX] |_{u^1},\]
\[(D_1^* F)[X] = F[X-(1-q^{-1})(1-t^{-1}) u^{-1}] \pExp[uX] |_{u^1},\]
and let $\underline{e}_1: V_0\to V_0$ be the operator of multiplication by $e_1[X]=X$.
It is easy to verify that
\[D_1 = -d_- d_+^*,\quad \underline{e}_1 = d_- d_+,\quad \bar\omega D_1 = D_1^*\bar\omega.\]
Thus it follows that
\[\mathcal{N} D_1 = -\underline{e}_1 \mathcal{N},\quad \mathcal{N} \underline{e}_1 = -D_1 \mathcal{N}.\]
Let $\nabla'=\mathcal{N}\bar\omega$ on $V_0$.
Then 
\[\nabla'(1) = 1,\quad \nabla' \underline{e}_1 = 
D_1 \nabla',\quad \nabla' D_1^* = -\underline{e}_1 \nabla'.\]
It was shown in \cite{garsia1998explicit} that $\nabla$ satisfies the
same commutation relations, 
and that one can obtain all symmetric functions starting from $1$ and 
successively applying $\underline{e}_1$ and $D_1^*$. Thus
$\nabla=\nabla'$, 
proving part \ref{nabpart}.
\end{proof}

%
%
%

The compositional shuffle conjecture now follows easily:
\begin{thm}
\label{shuffthm}
For a composition $\alpha$ of length $k$, we have
\[\nabla C_{\alpha_1}\cdots C_{\alpha_k}(1)= D_{\alpha}(X;q,t).\]
\end{thm}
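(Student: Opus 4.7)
The plan is to start from Theorem \ref{thm:recN}, which says $D_\alpha(q,t) = d_-^k(N_\alpha)$, and transport this identity to $V_0 = \Sym[X]$ via the antilinear involution $\mathcal{N}$ of Theorem \ref{mainthm}. By part (ii) of that theorem, $N_\alpha = q^{-(n-k)}\mathcal{N}(y_\alpha)$ with $n = |\alpha|$, and since $\mathcal{N}$ commutes with $d_-$,
\[
D_\alpha \;=\; q^{-(n-k)}\,\mathcal{N}\bigl(d_-^k y_\alpha\bigr).
\]
Iterating the lowering rule $d_-(y_k^i F) = -B_{i+1}F$, and using $y_j d_- = d_- y_j$ from Lemma \ref{lem:addingback} to pull the remaining $y$-factors through $d_-$, I would evaluate $d_-^k(y_\alpha) = (-1)^k B_{\alpha_1}\cdots B_{\alpha_k}(1)\in \Sym[X]$. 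Invoking part (iii) of Theorem \ref{mainthm}, namely $\mathcal{N}|_{V_0} = \nabla\bar\omega$, this becomes
\[
D_\alpha \;=\; (-1)^k q^{-(n-k)}\,\nabla\,\bar\omega\bigl(B_{\alpha_1}\cdots B_{\alpha_k}(1)\bigr).
\]

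The next step is to move $\bar\omega$ across the product of $B$-operators and identify the result with $C_\alpha(1)$. My proposal is to establish the operator intertwining identity on $\Sym[X]$,
\[
\bar\omega\,B_r \;=\; -q^{r-1}\,C_r\,\bar\omega,
\]
which, iterated and combined with $\bar\omega(1) = 1$, yields
\[
\bar\omega\bigl(B_{\alpha_1}\cdots B_{\alpha_k}(1)\bigr) \;=\; \prod_{i=1}^k(-q^{\alpha_i-1})\,C_{\alpha_1}\cdots C_{\alpha_k}(1) \;=\; (-1)^k q^{n-k}\,C_\alpha(1).
\]
Substituting back, the scalar factors $(-1)^{2k}$ and $q^{\pm(n-k)}$ cancel, producing the desired equality $D_\alpha = \nabla C_{\alpha_1}\cdots C_{\alpha_k}(1)$.

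The main obstacle is verifying the intertwining $\bar\omega B_r = -q^{r-1}C_r\bar\omega$, which I would check by a direct plethystic computation of $\bar\omega B_r \bar\omega$ applied to a test $F\in \Sym[X]$. Applying $\bar\omega$ on both sides of $B_r$ converts $F[X-(q-1)z^{-1}]$ into $F[X+(q^{-1}-1)z^{-1}]$, since $\bar\omega$ negates $X$ and conjugates $q$. Meanwhile, the series $\pExp[-zX] = \sum_n(-z)^n e_n[X]$, which is a consequence of the plethystic convention $p_k[-X] = -p_k[X]$, transforms under $X\mapsto -X$ combined with $e_n[-X] = (-1)^n h_n[X]$ into $\sum_n z^n h_n[X] = \pExp[zX]$. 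Comparison with $(C_r F)[X] = -q^{1-r}F[X+(q^{-1}-1)z^{-1}]\pExp[zX]|_{z^r}$ yields exactly the scalar $-q^{r-1}$. The computation is routine but requires careful bookkeeping of the interaction between the $\lambda$-ring involution $\omega$ and the coefficient conjugation, together with the fact that $\bar\omega$ is antilinear while $B_r$, $C_r$ and $\nabla$ are $\Q(q,t)$-linear.
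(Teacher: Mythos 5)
Your argument is correct and follows the same route as the paper: transport $D_\alpha = d_-^k(N_\alpha)$ through $\mathcal{N}$ using parts (ii) and (iii) of Theorem \ref{mainthm}, evaluate $d_-^k(y_\alpha) = (-1)^k B_\alpha(1)$, and convert $B_\alpha$ to $C_\alpha$ via $\bar\omega$. The only difference is cosmetic — you make explicit the intertwining $\bar\omega B_r = -q^{r-1} C_r \bar\omega$, whereas the paper compresses this into the single asserted equality $q^{|\alpha|-k}(-1)^k B_\alpha(1) = \bar\omega C_\alpha(1)$.
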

\begin{proof}
Using Theorems \ref{thm:recN} and \ref{mainthm}, we have
\[D_{\alpha}(q,t)=d_-^{k} (N_\alpha) = d_-^k(\mathcal{N}(q^{|\alpha|-k} y_\alpha))=\mathcal{N}(q^{|\alpha|-k} d_-^k(y_\alpha))=\]
\[\mathcal{N}\left(q^{|\alpha|-k} (-1)^{k} B_{\alpha}(1)\right) = 
\mathcal{N}\bar\omega C_{\alpha}(1)=\nabla
C_{\alpha}(1).\]
\end{proof}


\bibliographystyle{amsalpha}
\bibliography{refs}

\end{document}